\theoremstyle{plain}
\newtheorem{thm}{Theorem}[section]
\newtheorem*{thm*}{Theorem}
\newtheorem{prop}[thm]{Proposition}
\newtheorem*{prop*}{Proposition}
\newtheorem*{assumption*}{Assumption}
\theoremstyle{definition}
\newtheorem{defin}[thm]{Definition}
\newtheorem{cor}[thm]{Corollary}
\newtheorem*{cor*}{Corollary}
\newtheorem{ex}[thm]{Example}
\theoremstyle{remark}
\newtheorem{rem}[thm]{Remark}
\newtheorem*{rem*}{Remark}
\newtheorem*{notation*}{Notation}
\newtheoremstyle{citing}
  {\topsep}
  {\topsep}
  {\itshape}
  {}
  {\bfseries}
  {.}
  {.5em}
  {\thmnote{#3}}
\theoremstyle{citing}
\newcommand{\field}[1]{\mathbb{#1}}
\newcommand{\R}{\field{R}}
\newcommand{\N}{\field{N}}
\newcommand{\Z}{\field{Z}}
\newcommand{\ron}[1]{\mathcal{#1}}
\newcommand{\nr}[1]{\left\Vert #1\right\Vert}
\newcommand{\ie}{{\emph{i.e.},~}}
\newcommand{\eps}{\varepsilon}
\renewcommand{\phi}{\varphi}
\renewcommand{\bar}[1]{\overline{#1}}
\newcommand{\ra}{\rightarrow}
\newcommand{\lra}{\longrightarrow}
\newcommand{\qtran}{quasi-translation{}}
\newcommand{\qtrans}{quasi-translations{}}
\title{Tiling deformations, cohomology, and orbit equivalence of tiling spaces}
\date{}
\author{Antoine Julien \\ {\small Nord University,} \\ {\small Levanger, Norway} \\ {\small antoine.julien@nord.no}
\and
Lorenzo Sadun \\ {\small Department of Mathematics, University of Texas,} \\ {\small Austin, TX 78712, USA} \\ {\small sadun@math.utexas.edu}
}
\begin{document}

\maketitle

\begin{abstract}
  We study homeomorphisms of minimal and uniquely ergodic tiling
  spaces with finite local complexity (FLC), of which suspensions of
  (minimal and uniquely ergodic) $d$-dimensional subshifts are an
  example, and orbit equivalence of tiling spaces with (possibly)
  infinite local complexity (ILC). In the FLC case, we construct a
  cohomological invariant of homeomorphisms, and show that all
  homeomorphisms are a combination of tiling deformations, 
maps homotopic to the identity (known as \qtrans),
  and local equivalences (MLD). In the ILC case, we construct a
  cohomological invariant in the so-called weak cohomology, and show
  that all orbit equivalences are combinations of tiling deformations,
  \qtrans, and topological conjugacies. These generalize results of
  Parry and Sullivan to higher dimensions.  We also show that
  homeomorphisms (FLC) or orbit equivalences (ILC) are completely
  parametrized by the appropriate cohomological invariants.  Finally,
  we show that, under suitable cohomological conditions, continuous
  maps between tiling spaces are homotopic to compositions of tiling
  deformations and either local derivations (FLC) or factor maps
  (ILC).

 \smallskip
 \noindent
{\small {\it MSC codes:} 37A20 (primary) 37B50, 52C23 (secondary)}
\end{abstract}


\section{Introduction}

In this paper, we study equivalences
between minimal and uniquely ergodic
aperiodic tiling spaces. (By ``aperiodic'' we mean that
the tilings do not admit any translational symmetries.)
Such spaces in dimension $d$ are compact spaces carrying an
$\R^d$-action.
A simple example is given by the suspension of a minimal
$\Z^d$-subshift. 
We distinguish between tilings with 
\emph{finite local complexity}, or FLC,
\emph{i.e.}, which have finitely many local configurations,
and those with \emph{infinite local complexity}, or ILC. See 
\cite{FS12} for a number of constructions of ILC tiling spaces, 
and \cite{Bellissard-talk, Bel15} for discussions of the physical
significance of ILC tilings. 

When dealing with FLC tiling spaces, the natural notion of 
equivalence is \emph{homeomorphism}. When dealing with ILC tiling spaces,
we consider \emph{orbit-equivalences}, \emph{i.e.}, homeomorphisms that
send $\R^d$ orbits to $\R^d$ orbits. In the FLC case, the path components of
the tiling spaces are $\R^d$ orbits, which guarantees that all homeomorphisms
are orbit-equivalences. 

In both settings, we ask two questions:
\begin{enumerate}
\item If $\Omega$ and $\Omega'$ are orbit-equivalent tiling spaces, how
do the geometric and combinatorial structures of tilings in $\Omega$ relate
to the geometric and combinatorial structures of tilings in $\Omega'$? 
Put another way, how different can two tilings be and still belong to 
orbit-equivalent tiling spaces? 
\item If $h: \Omega \to \Omega'$ is an orbit-equivalence, can we decompose
$h$ as the composition of maps, each of which preserves particular features
of the tilings?
\end{enumerate}

The answer to the second question is ``yes'', and this decomposition 
gives a partial answer to the first question. To state the result 
for the FLC case, we need some terminology. 

We call 
a map $\tau_s: \Omega \to \Omega$ that is homotopic to the 
identity 
a \emph{\qtran}. 
The term refers to the fact that $\tau_s(T)$ is a translate of 
$T$, and the prefix ``quasi'' acknowledges that this translation 
depends on $T$. Equivalently, there is a continuous function $s: \Omega \to
\R^d$, and $\tau_s(T)=T-s(T)$. (See Proposition \ref{prop6-2} below 
and \cite{Kwa10}
for proofs that $s$ is continuous.)

In a \emph{shape change transformation} $h_{sc}: \Omega \to \Omega''$,
the tilings in $\Omega''$ have exactly the same combinatorics as
the corresponding tilings in $\Omega$, only with tiles of different shape and
size. Shape changes are defined more precisely in Section 8. 
For a further discussion of shape changes, and their
classification using cohomology, see \cite{CS06, Kel08}.

\emph{Local derivations} are the tiling analogues of sliding block codes.
That is, $f: \Omega_1 \to \Omega_2$ is a local derivation if there exists a
radius $R$ such that, whenever $T\in \Omega_1$ and $T' \in \Omega_1$ agree
on a ball of radius $R$ around an arbitrary point $x \in \R^d$, then 
$f(T)$ and $f(T')$   
agree exactly on a ball of radius 1 around $x$. A \emph{mutual local
derivation} or \emph{MLD equivalence} 
is an invertible local derivation whose inverse is also 
a local derivation. All MLD equivalences are topological conjugacies, but
in contrast to symbolic dynamics, not all topological conjugacies 
are MLD equivalences \cite{RS01, Pet99}.

Our main result for tilings with FLC is then:

\begin{thm}\label{main-intro}
Suppose that 
$\Omega$ is a minimal and uniquely ergodic space of FLC aperiodic tilings, that
$\Omega'$ is a space of FLC tilings, and that $h:\Omega \to \Omega'$ is 
a homeomorphism. Then there exists an FLC tiling space $\Omega''$ and 
homeomorphisms 
$\tau_s:\Omega \to \Omega$, $h_{sc}: \Omega \to \Omega''$ and $\phi:
\Omega' \to \Omega''$, such that
\begin{itemize}
\item $\phi \circ h = h_{sc} \circ \tau_s$, or equivalently 
$h = \phi^{-1} \circ h_{sc} \circ \tau_s$,
\item $\tau_s$ is a \qtran{},
\item $h_{sc}$ is a shape change transformation, and 
\item $\phi$ is an MLD equivalence. 
\end{itemize}
\end{thm}

An almost identical theorem holds for tilings that may have ILC:
\begin{thm}\label{main-intro2}
Suppose that 
$\Omega$ is a minimal and uniquely ergodic space of aperiodic tilings, that
$\Omega'$ is a space of tilings, and that $h:\Omega \to \Omega'$ is 
an orbit equivalence.
Then there exists a tiling space $\Omega''$ and 
homeomorphisms 
$\tau_s:\Omega \to \Omega$, $h_{sc}: \Omega \to \Omega''$ and $\phi:
\Omega' \to \Omega''$, such that
\begin{itemize}
\item $\phi \circ h = h_{sc} \circ \tau_s$, or equivalently 
$h = \phi^{-1} \circ h_{sc} \circ \tau_s$,
\item $\tau_s$ is a \qtran{},
\item $h_{sc}$ is a shape change transformation, and 
\item $\phi$ is a topological conjugacy.
\end{itemize}
\end{thm}

We return to our first question.  
If $h:\Omega \to \Omega'$ is a homeomorphism of minimal and uniquely 
ergodic FLC tiling spaces, and if 
$T\in \Omega$ and $T'=h(T) \in \Omega'$ are corresponding tilings, 
then $T'$ is obtained from $T$ by 
\begin{enumerate}
\item Applying a translation to $T$ to get $\tau_s(T)$, 
\item Changing the shapes and sizes of the tiles of $\tau_s(T)$, while
preserving their combinatorics, to get a tiling $h_{sc}\circ \tau_s(T)$, 
and finally
\item Locally recoding, via $\phi^{-1}$, 
the patterns of $h_{sc}\circ\tau_s(T)$.
\end{enumerate}

In dimension one, these results were essentially known: a theorem of
Parry and Sullivan~\cite{PS75,PT82-book} states that
flow equivalence between two Markov shifts or minimal subshifts
(\emph{i.e.}, orbit-equivalence of their suspensions)
is generated by a re-labeling of the tiles and a change in the
length of the tiles (an MLD equivalence and a shape change).
If $X$ is a subshift with shift map $\sigma$, then the {\em suspension}
of $X$, denoted $SX$ and also called the {\em mapping cylinder of $\sigma$}, 
is the space $X\times \R / \sim$, where 
$(u,t) \sim (\sigma(u), t-1)$. If $r: X \to \R$ is a locally constant function
of constant sign, then $S^rX$, called the {\em suspension of $\sigma$
with roof function $r$}, is defined to be the quotient of $S \times \R$
by the relation $(u,t) \sim (\sigma(u), t-r(u))$. 

Parry and Sullivan proved that, if $X$ and $Y$ are two minimal subshifts, and
$\phi$ is a homeomorphism $SX \ra SY$,
then there exists a locally constant function $r: X \to \R$
of
constant sign $r$, and a topological conjugacy
\(
 \Phi: S^r X \lra SY
\).
In particular, $r$ defines an element in the first cohomology
group
\[
 H^1 (X, \Z) \otimes \R \simeq \Bigl( \{f: X \ra \Z \} / \langle f - f \circ \sigma \rangle \Bigr) \otimes \R,
\]
where $\sigma$ is the shift map on $X$.  The class $[r]$ is then either a
positive or negative element in the ordered cohomology group,
depending whether $r$ is positive or negative (and depending whether
$\phi$ preserves or changes the orientation).  Conversely, if $[f]$ is
any element in $H^1(X,\Z) \otimes \R$ whose pairing with every
invariant probability measure is (say) positive, then there exists a
representative $f_0 > 0$ of this cohomology class~\cite{BH96, GPS95}.
In this case, the suspension $S^{f_0}X$ is a well-defined object, and
in our formalism, the invariant for the obvious map $SX \ra S^{f_0}X$
is precisely $[f_0] = [f]$.  Hence, positive (resp.\@ negative)
elements of $H^1(X,\R)$ parametrize orientation preserving (resp.\@
reversing) flow equivalences from $X$ to other subshifts up to
conjugacy.\footnote{To be precise, $S^f X$ and $S^g X$ are conjugate
  whenever $(f-g)(x) = s(x) - s\circ\sigma(x)$ for a \emph{continuous}
  $s$, rather than locally constant~\cite{PT82,CS03}. The positive or
  negative elements of $H^1$ itself parametrize flow equivalence up to
  a stronger notion---called MLD below.}  An interesting point is that
in dimension~$1$, the subset of $H^1(X,\R)$ parametrizing flow
equivalences can be defined by means of an order structure, and does
not require any ergodicity conditions (compare with
Theorem~\ref{structure1} below).

\bigskip

We prove Theorem \ref{main-intro} in three
steps. First, we associate to every homeomorphism $\Omega \to \Omega'$ of 
minimal FLC tiling spaces a class 
$[h]$ in the first \v Cech cohomology $\check H^1(\Omega, \R^d)$ 
with coefficients
in $\R^d$. There are several isomorphic realizations of this 
cohomology, in particular the \emph{strong} pattern-equivariant cohomology
of \cite{Kel03}. (See Section 4 for a discussion of tiling cohomology.)
We prove
\begin{thm}[Theorem \ref{Classification-Thm}]\label{structure1}
If $\Omega$, $\Omega_1$, and $\Omega_2$ are minimal FLC tiling
spaces, if $h_1: \Omega \to \Omega_1$
and $h_2: \Omega \to \Omega_2$ are homeomorphisms, and if 
$[h_1]=[h_2]$, then there is a commutative diagram
\begin{equation} \label{structureeq1}
   \begin{CD}
    \Omega   @>{h_1}>> \Omega_1  \\
     @V{\tau_s}VV           @VV{\phi}V \\
    \Omega   @>{h_2}>> \Omega_2
   \end{CD}
\end{equation}
where $\tau_s$ is a \qtran{} and $\phi$
is an MLD equivalence. 
\end{thm}
This is a strengthening of a result \cite{Jul17} of 
the first author, who constructed a cohomological invariant that characterizes
homeomorphisms up to topological conjugacy and homotopy. This step does
not require unique ergodicity, only FLC and minimality. 

Next we consider what values the class $[h]$ may take. 
Here we restrict attention to uniquely ergodic systems.
Given the (unique) invariant measure $\mu$, the Ruelle--Sullivan
map $C_\mu$ \cite{KP06} 
sends elements of $\check H^1(\Omega; \R^d)$ 
to $d \times d$ square matrices. The construction also applies to
``weak'' cohomology.  In one dimension, the image
of a cocycle under the Ruelle--Sullivan map can be seen as an analogue
of Poincar\'e's rotation number: a $1$-cocycle valued in $\R$ can be
restricted to an orbit and integrated to a $0$-cochain\footnote{This
  is a $0$-cochain of $\R$ and not of $\Omega$, \emph{a
    priori}. Cochains of $\Omega$ are assumed to be
  \emph{pattern-equivariant}, see Section~\ref{sec:cohomology}.}%
, say $F$. Then by unique ergodicity,
\[
 \lim_{t \ra +\infty} \frac{F(x+t) - F(x)}{t}
\]
converges, and depends neither on $x$ nor on the orbit.

In higher dimensions, 
the matrix $C_\mu([h])$ has a similar description as 
the large-scale distortion
associated with the homeomorphism $h$. It should come as no surprise that
this distortion is never singular. 

\begin{thm}[Theorem \ref{RS-invertible}] \label{RS-theorem1}
Let $h: \Omega \ra \Omega'$ be a homeomorphism between two
minimal FLC tiling spaces, with $\Omega$ uniquely ergodic. 
Then $C_\mu ([h])$ is invertible. 
\end{thm}

Finally, we construct shape changes from 
cohomology classes in $\check H^1(\Omega; \R^d)$ whose images under
$C_\mu$ are invertible.

\begin{thm}[Theorem \ref{lastthm}]\label{RS-theorem2}
Let $\Omega$ be a minimal and uniquely ergodic space of FLC tilings,
let $\alpha \in \check H^1(\Omega, \R^d)$ and suppose that $C_\mu(\alpha)$ is
invertible. Then there exists a shape change transformation
$h_{sc}$ such that $[h_{sc}]=\alpha$.
\end{thm}

From these three results, Theorem \ref{main-intro} follows easily. If 
$h: \Omega \to \Omega'$ is a homeomorphism of minimal FLC tiling spaces
with $\Omega$ uniquely ergodic, then $C_\mu[h]$ is invertible, so there
exists a shape change $h_{sc}$ such that $[h_{sc}]=[h]$. 
The commutative diagram (\ref{structureeq1}), with $h_1=h$ and 
$h_2=h_{sc}$, 
is then tantamount to the conclusion of Theorem \ref{main-intro}.

There is an analogous development without the FLC assumption. Instead
of working in $\check H^1(\Omega, \R^d)$, we work with the so-called
\emph{weak cohomology} $H^1_w(\Omega, \R^d)$, described in Section 4,
and the Ruelle-Sullivan map is modified to map 
$H^1_w(\Omega, \R^d)$ to $d\times d$ square matrices. 
The statements of 
Theorems \ref{structure1}, \ref{RS-theorem1} and \ref{RS-theorem2} need
only small modifications, dropping the FLC requirement, replacing 
$\check H^1$ with $H^1_w$, replacing homeomorphisms with orbit-equivalences, 
and replacing MLD-equivalences with topological conjugacies. In fact, the 
statements and proofs of Theorems
\ref{Classification-Thm}, \ref{RS-invertible}, and \ref{lastthm} 
include both the FLC and non-FLC cases. By exactly the same argument as
with the FLC case, these three
theorems imply Theorem \ref{main-intro2}.

Note that, for uniquely ergodic FLC tiling spaces, the set
\[ S= \{ \alpha \in \check H^1(\Omega; \R^d) | C_\mu(\alpha) \in GL(d,\R)\}
\]
not only provides an invariant for tiling
spaces that are homeomorphic to $\Omega$, up to MLD. 
It actually provide a \emph{parametrization} of all such
homeomorphic spaces. This parametrization is unique, up to the action of 
self-homeomorphisms of $\Omega$ on $S$. Understanding
this group of self-homeomorphisms, and its action on cohomology, is a subject
for a future paper. 

Finally, we consider cohomological invariants for arbitrary maps between
minimal FLC tiling spaces. Given any continuous map $h: \Omega \to \Omega'$ 
of such spaces, we define a class 
$[h] \in \check H^1(\Omega;
\R^d)$. We then prove the following analogue of 
Theorem \ref{structure1}:

\begin{thm}[Theorem \ref{factor-thm}]\label{Any-map-theorem}
Let $h_1: \Omega \ra \Omega_1$ be a homeomorphism of minimal FLC tiling spaces 
and $h_2: \Omega \ra \Omega_2$ be a continuous map of minimal FLC tiling spaces.
If $[h_1] = [h_2]$ as elements of $\check H^1(\Omega, \R^d)$, then
$h_2$ is homotopic to a composition $\phi \circ h_1$, where $\phi: 
\Omega_1 \ra \Omega_2$ is a local derivation. 
\end{thm}
In particular, if $\Omega$ is uniquely ergodic and $C_\mu([h_2])$ is 
invertible, then we can take $h_1$ to be a shape change transformation. 
Every continuous and surjective map of an FLC, minimal and  
uniquely ergodic tiling space to another minimal FLC tiling space 
(possibly with an assumption about the invertibility of $C_\mu([h_2])$)
is then seen to be homotopic
to the composition of a shape change and a local derivation. 

As usual, there is an analogous theorem for ILC spaces, with 
$\check H^1$ replaced by $H^1_w$, with local derivations replaced by factor
maps, and with all maps required to send 
translational orbits to translational orbits. 

The structure of the paper is as follows. In Sections 2--3 we review
the formalism of tiling spaces and spaces of Delone sets, and in
Section 4 we review the different notions of tiling space cohomology
and the relations between them. Section 4 is particularly important, because
there are numerous versions of real-valued 
tiling cohomology, all known to be 
isomorphic to one of two groups: the \v Cech cohomology 
$\check H^k(\Omega, \R)$ or the weak cohomology $H^k_w(\Omega, \R)$. 
Various constructions in this paper use different realizations of these 
cohomology theories, and it is frequently necessary to shift from one
picture to another. For more information on tiling cohomology, see
\cite{BK10, Sad08, Sad15}.
In Section 5 we define the invariant
$[h] \in \check H^1(\Omega,\R^d)$ 
for homeomorphisms of FLC tiling spaces and compute it in a
number of examples. In Section 6 we define the invariant in
$H^1_w(\Omega; \R^d)$ for orbit equivalences of general tiling spaces,
and show how it relates to the previously defined invariant when the
tiling spaces have FLC. We also show how the invariants classify
homeomorphisms up to \qtran{} and either MLD equivalence or 
topological conjugacy, depending on the setting, thereby proving
Theorem \ref{structure1} and its ILC analogue.
In Section 7 we define the
Ruelle--Sullivan map and prove Theorem \ref{RS-theorem1} and its ILC
analogue. 
In Section 8 we prove Theorem \ref{RS-theorem2} and its ILC analogue, thereby 
completing the proof of Theorems \ref{main-intro} and \ref{main-intro2}.
In Section 9 we demonstrate, by example, how
$H^1_w$ is typically infinite-dimensional, and how an ILC tiling space
can be homeomorphic to an FLC tiling space without being topologically
conjugate to any FLC tiling space.  Finally, in Section 10 we prove 
Theorem \ref{Any-map-theorem} and its ILC analogue.

\paragraph*{Acknowledgments}
Work of the second author is partially supported by NSF Grant DMS-1101326.
We thank Johannes Kellendonk and Christian Skau for helpful discussions.

\section{Tilings and Delone sets}

In this section, we define the main objects which will be studied in
the next parts of the paper, namely tilings and (labeled) Delone
sets. For some applications, such as  constructing transversals, it is
more convenient to work with Delone sets. For others, such as
pattern-equivariant cohomology,  it is more convenient to work with
tilings. However, the two constructions are essentially equivalent.

\begin{defin}
  A Delone set of $\R^d$ is a subset $\Lambda \subset \R^d$ which is
  \emph{uniformly discrete} and \emph{relatively dense}, in the sense
  that there exist respectively $r > 0$ such that any two points of
  $\Lambda$ are at least at distance $r$ apart; and $R> 0$ such that
  any ball of radius $R$ in $\R^d$ intersects $\Lambda$.
 
  A \emph{labeled Delone set} is (the graph of) a map $\ell: \Lambda
  \rightarrow X$ from a Delone set to a compact metric space $X$,
  called the space of labels.  Its elements are pairs $(\lambda,
  \ell(\lambda))$ where $\lambda$ is a point of $\R^d$ and
  $\ell(\lambda)$ is its label.  By abuse of terminology, we still
  call a labeled Delone set simply $\Lambda$.
\end{defin}

\begin{defin}
  A tile is a compact subset of $\R^d$ that is homeomorphic to a
  closed ball of positive radius. For simplicity, we will assume that
  a tile is a convex polytope.
Tiles that are translates of one another
are said to be equivalent. We pick a distinguished representative
from each equivalence class, called a {\em prototile}. 
The set of prototiles can
  be given a topology induced by the Hausdorff distance on tiles.
  Given a compact set of prototiles $\ron A$, a tiling with tiles in
  $\ron A$ is a set $T = \{t_i\}_{i \in I}$ where the $t_i$ are tiles
  whose class is in $\ron A$, the union of the $t_i$ is all of $\R^d$,
  and any two distinct tiles only intersect on their boundary.

  Similarly, we can define a labeled tiling as the graph of a map from
  $T$ to a compact metric space of labels $X$. The set of labeled
  prototiles is then a compact subset of $\ron A \times X$. If $X$ is 
infinite, we also assume
that the shapes and positions of the prototiles are such that 
convergence in $X$ implies convergence in $\ron A \times X$.
\end{defin}

Given a labeled Delone set, one can construct a tiling by looking at
the Voronoi cells of the points of the Delone set, \emph{i.e.}, at the regions
closer to a particular point than to all others. Conveniently, Voronoi
cells are always convex polytopes.  One can also take the dual tiling
to the Voronoi tiling, thereby obtaining a tiling whose vertices are
the points of the original Delone set. 

Conversely, given a tiling, we can construct a Delone set as follows.
To each labeled tile $(t,l)$ we associate a distinguished point $x_t$,
called a puncture (\emph{e.g.},  at the center of mass), and consider the
labeled point $(x_t, ([t],l))$ where $([t],l) \in \ron A \times X$.
Therefore, problems worded in terms of tilings can be re-worded in
terms of Delone sets, and vice versa.  We use the generic term
``tiling space'' to refer either to a space of tilings or a space of
Delone sets.

Note that if we start with a tiling by arbitrary shapes, construct a 
Delone set, and then construct the corresponding Voronoi tiling, we obtain
a tiling by convex polytopes. In fact, this ``Voronoi trick'' induces an 
MLD equivalence between the original tiling space and a space of tilings by 
convex polytopes. For purposes of understanding tiling spaces and homeomorphisms
up to MLD equivalence, our assumption that all tiles are convex polytopes does
not sacrifice any generality. 

A \emph{patch} or \emph{local configuration} is the restriction of a
tiling or Delone set to a bounded region. Formally, if $\Lambda$ is a
(labeled) Delone set, $\Lambda \cap B(x,R)$ is the restriction of
$\Lambda$ to the ball of radius $R$ and center $x$.  For a tiling, $T
\cap B(x,R)$ is the set of all (decorated) tiles which intersect
$B(x,R)$.

The set of tilings (or Delone sets) can be given a metric
topology. For Delone sets, it coincides with vague convergence of
measures, when $\Lambda$ is identified with the sum of Dirac measures
with support $\Lambda$. Note that the Delone set property implies that
whenever $\Lambda_n$ converges to $\Lambda$ vaguely, any point
$x$ in $\Lambda$ corresponds to a point $x_n \in \Lambda_n$ for $n$
large enough, and $x_n \ra x$.  This justifies the following
definition for convergence of labeled Delone sets.
\begin{defin}
  Let $\Lambda$, $\Lambda'$ be two labeled Delone sets, with the same
  space of labels $X$.  We say that the distance between $\Lambda$ and
  $\Lambda'$ is less than $\eps$ if there exist a one-to-one map
  $\phi: \Lambda \cap B(0,\eps^{-1}-\eps) \ra \Lambda' \cap
  B(0,\eps^{-1})$ and a one-to-one map $\phi': \Lambda' \cap
  B(0,\eps^{-1}-\eps) \ra \Lambda \cap B(0,\eps^{-1})$ which are
  inverse of each other (where defined), and such that
 \[
  \forall x, x', \ \nr{\phi(x)-x} + \nr{\phi'(x')-x'} + D(\ell(x), \ell(\phi(x)) + D(\ell(x'),\ell(\phi'(x')) < 2 \eps,
 \]
 where $D$ is the distance on the space of labels.
\end{defin}

That's a very complicated definition! However, the idea behind it is 
simple. 
Two Delone sets are
close if on a large ball (of size $1/\eps$) 
around the origin, the point-sets are very
close; there is a one-to-one correspondence between points,
corresponding points are $\eps$-close in $\R^d$ and their labels are 
$\eps$-close in $X$.
Similarly, two tilings $T$ and $T'$ are close if on a large ball
around the origin, they are nearly the same, in that there
is a one-to-one correspondence between tiles, and corresponding tiles
are close both in shape for the Hausdorff distance and in label.

\begin{defin}
  A tiling $T$ or Delone set $\Lambda$ has \emph{finite local
    complexity} (or FLC) if it has are only finitely many local
  configurations of a given size, up to translation.  Otherwise, it
  has infinite local complexity (or ILC).
\end{defin}

The topology for FLC tilings is induced by a much simpler metric: 
$\Lambda$ and $\Lambda'$
are within distance $\eps$ if there exists $x,x'$ of norm less than
$\eps$ such that $(\Lambda-x) \cap B(0,\eps^{-1}) = (\Lambda'-x') \cap
B(0,\eps^{-1})$.

The group $\R^d$ acts on tilings and Delone sets by translation:
if $\Lambda$ is a Delone set, we write 
\[
 \Lambda - x := \bigl\{ \lambda - x \ : \ \lambda \in \Lambda \}.
\]
Similarly if $T = \{t_i\}$ is a tiling (\ie a collection of tiles), $T-x$ is the
collection $\{t_i - x\}$.

Given a Delone set $\Lambda$ (or a tiling), one can define its \emph{hull}
as the closure
\[
 \Omega_\Lambda := \overline{ \{ \Lambda - x \ ; \ x \in \R^d \} }.
\]
Elements in the closure can still be interpreted as Delone sets
(resp.\ tilings). This space supports an action of $\R^d$ by
translations, and is a compact dynamical system.  Our focus will
be on \emph{minimal} spaces, meaning that every
$\R^d$-orbit is dense.
The well-known Penrose tiling is an example of a minimal, aperiodic tiling space.

\section{Tiling equivalences and transversals }

We defined so far two families of tiling spaces: FLC tiling spaces and
ILC tiling spaces.  We now define a family of maps between tilings
spaces, which can be seen as equivalence between the dynamical
systems.  FLC tiling spaces have additional structure, so it makes
sense to define a family of more rigid maps between FLC tiling spaces.

\begin{defin}
 A \emph{factor map} $\phi: \Omega \ra \Omega'$ is a continuous onto map that
 satisfies
 \[
   \phi (T-x) = \phi(T) - x
 \]
for all $x \in \R^d$. 
 In particular, it sends orbits to orbits.
 A \emph{topological conjugacy} is a factor map that is a homeomorphism.
\end{defin}

\begin{defin}\label{def:local-map}
 Let $\Omega$ and $\Omega'$ be FLC tiling spaces. A \emph{local map} from $\Omega$ to $\Omega'$ a continuous map which satisfies
 \(\forall r> 0 , \ \exists R > 0, \ \forall T,T' \in \Omega\),
 \[
  \big( T \cap B(0,R) = T' \cap B(0,R) \big) \Rightarrow \big( \phi(T) \cap B(0,r) = \phi (T') \cap B(0,r) \big).
 \]
 A \emph{local derivation} is a local map that is also a factor map.
 A \emph{mutual local derivation} (or MLD map) is a local derivation
 that is a homeomorphism, and whose inverse is a local derivation. In
 particular, it is a conjugacy.
\end{defin}


Transversals can be defined for all tiling spaces, but we will
require more structure from transversals of FLC tiling spaces.

\begin{defin}\label{def:canonical-transversal}
 Let $\Omega$ be a space of Delone sets. The \emph{canonical transversal} of $\Omega$ is the set
 \[
   \Xi = \overline{ \{ \Lambda - \lambda \ ; \ \lambda \in \Lambda \}} .
 \]
 That is, it is the set of all Delone sets in $\Omega$ which have a point at the origin.
\end{defin}

\begin{defin}
  Let $\Omega$ be a tiling space (or a space of Delone sets). A
  transversal for $\Omega$ is given by a factor map $\ron D: \Omega
  \ra \Omega'$ to a space of Delone sets, and is defined by
 \[
  \Xi_{\ron D} = \ron D^{-1} (\Xi'),
 \]
 where $\Xi'$ is the {\em canonical} transversal of $\Omega'$.\footnote{
 Note that in particular, if $\Omega$ is a space associated with a
 Delone set, its \emph{canonical} transversal in the sense of definition~\ref{def:canonical-transversal}
 is a transversal in the present sense; however, there exist many transversals besides the canonical one.}

 When $\Omega$ has FLC, we define an \emph{FLC-transversal}
 (or simply transversal when it is clear from the context)
 identically, except that the factor map $\ron D$ is assumed
 to be a local derivation.
\end{defin}

We call such a 
factor map $\ron D$ a ``pointing rule'' for the tilings of $\Omega$.

\begin{ex}
  As a typical example, if $\Omega$ is a tiling space with convex
polyhedral tiles, let $\ron D
  (T)$ be defined as the Delone set of all barycenters of tiles in
  $T$.  Then the associated transversal $\Xi_{\ron D}$ is often called
  in the literature ``the canonical transversal of~$\Omega$''.  When
  $\Omega$ has FLC, this transversal is an FLC-transversal.
\end{ex}

This last example can be generalized considerably. Even if the tiles 
are not polyhedra, we can pick a distinguished point in each prototile
(sometimes called a {\em puncture}) and let $\ron D(T)$ be the set of all
punctures of tiles in $T$. We could also restrict attention to the punctures 
of just one kind of tile, or to tiles that meet additional conditions
(e.g. the punctures of those ``$A$'' tiles that are surrounded by ``$B$''
tiles). 

Note that the definitions of pointing rules and transversals 
do not require $\ron D$ to
be a local derivation, just a factor map. The additional flexibility
provided by allowing such general pointing rules will prove to be 
essential.

Tiling spaces have a local product structure in
which one of the factors is given by $\R^d$, the group acting by
translations. The following result states a known fact: tiling spaces
(with or without FLC) are examples of foliated spaces (also called
laminations).
\begin{prop}\label{prop:lamination}
 Let $\Omega$ be a minimal aperiodic tiling space, with or without
 FLC. Then for all $T$, there exists a transversal $\Xi$ containing
 $T$ and $C > 0$ such that the map
 \[
  \Xi \times B(0,C) \ra \Omega \ ; \ (T',x) \mapsto T'-x
 \]
 is a homeomorphism onto its image (in particular its image is open).
\end{prop}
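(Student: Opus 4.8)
The plan is to construct, for a given $T \in \Omega$, an explicit transversal through $T$ and verify directly that the ``box map'' $(T',x) \mapsto T'-x$ is a homeomorphism onto an open set. First I would fix $T$ and choose a point in space that works as an anchor: because $\Omega$ is a tiling space, $T$ has a tile (or in the Delone picture, a point of the Delone set) near the origin; by translating $T$ we may assume $0$ is in the interior of a tile of $T$ (or is itself a point of the Delone set). The idea is then to take the pointing rule $\ron D$ that marks, say, barycenters of tiles, or more robustly, to build a transversal adapted to $T$ by declaring $\Xi$ to be the set of tilings $T'$ near $T$ that ``agree combinatorially with $T$ near the origin in a way that pins down a distinguished point at $0$.'' Concretely, I would use a local derivation (in the FLC case) or factor map (in general) $\ron D: \Omega \to \Omega'$ into a space of Delone sets whose canonical transversal $\Xi'$ contains $\ron D(T)$, and set $\Xi = \ron D^{-1}(\Xi')$; one must check $T \in \Xi$, which amounts to arranging that $\ron D(T)$ has a point at the origin — achievable after a translation.

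Next I would choose $C > 0$. The natural choice is governed by two competing constraints: $C$ should be small enough that translating by vectors in $B(0,C)$ never moves a ``marked point'' of $\ron D(T')$ across the origin for $T'$ near $\ron D(T)$ — this uses uniform discreteness of Delone sets, so that marked points are at least $r$ apart and we can take $C < r/2$ — and small enough that the local product chart is injective. Injectivity of the map $\Xi \times B(0,C) \to \Omega$ is the crux: if $T_1' - x_1 = T_2' - x_2$ with $T_i' \in \Xi$ and $x_i \in B(0,C)$, then $T_1'$ and $T_2'$ are on the same orbit and their marked Delone sets $\ron D(T_i')$ both have a point at $0$; since $\ron D(T_1' - x_1) = \ron D(T_1') - x_1$ and similarly for the second, the two marked Delone sets $\ron D(T_1')$ and $\ron D(T_2')$ differ by the translation $x_1 - x_2 \in B(0,2C)$, and each has a point at the origin, so by uniform discreteness $x_1 = x_2$, whence $T_1' = T_2'$. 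Continuity of the map is immediate from joint continuity of the $\R^d$-action, and continuity of the inverse follows because on a small enough neighborhood one can recover $x$ from a nearby tiling as the displacement of the marked point closest to the origin — a continuous operation — and then recover $T'$ by translating back. Openness of the image then follows from invariance of domain, or more elementarily by noting that the chart covers a full neighborhood of $T$: any $S$ sufficiently close to $T$ has, by the Delone property, a marked point within $B(0,C)$ of the origin, and translating $S$ to put that point at $0$ lands in $\Xi$.

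The main obstacle I anticipate is the ILC case, where the simple FLC metric is unavailable and one must work with the genuine (complicated) tiling metric: there one has to be careful that ``the marked point nearest the origin'' depends continuously on the tiling and that the correspondence between marked points of nearby tilings is well-defined, which is exactly the content of the remark in the text that vague convergence of Delone sets forces nearby points to correspond. A secondary subtlety is making sure the transversal $\Xi$ we build is closed (hence compact) and that $\ron D$ really is a factor map (resp.\ local derivation) — for the barycenter pointing rule this is the cited example, but one should confirm the translate of $T$ still admits such a rule with a point exactly at $0$, which is why the freedom to translate $T$ first is essential. Modulo these continuity checks, the argument is the standard proof that a free action of $\R^d$ with a transversal carrying a uniformly discrete marking yields a lamination chart.
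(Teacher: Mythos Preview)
Your proposal is correct and follows essentially the same approach as the paper: reduce to the Delone picture, use the canonical transversal (after translating so that $T$ has a point at the origin, or equivalently using the pointing rule $\ron D(T')=T'-x$ for a fixed $x\in T$), and take $C=r/2$ from uniform discreteness. The paper's proof is two lines and leaves the injectivity, continuity, and openness verifications as ``easy to check''; you have simply written out those checks in full, and your anticipated ILC subtleties are exactly the ones absorbed into that phrase.
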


\begin{proof}
 Assume $\Omega$ is a space of Delone sets, and $0 \in T$.
 Let $r, R$ be the Delone constants. Then, if $\Xi$ is the canonical
 transversal and $C = r/2$, it is easy to check the conclusions of
 the theorem.
 If $0 \notin T$, let $\Xi$ be the transversal given by the pointing
 rule $\ron D (T') = T'-x$, where $x$ is any given point of $T$.
\end{proof}

In the FLC case, we have more rigidity.
\begin{prop}[\cite{BBG06}]
 Let $\Omega$ be an aperiodic, minimal tiling space with finite local complexity. Then for all $T_0 \in \Omega$, there exist $\eps > 0$ and an FLC transversal $\Xi$ containing $T$ such that the map
 \[
  B(0,\eps) \times \Xi \longrightarrow \Omega \ ; \ (x,T) \longmapsto T-x
 \]
 is a homeomorphism onto its image. In addition, $\Xi$ is a Cantor set (compact, totally disconnected and with no isolated points).
\end{prop}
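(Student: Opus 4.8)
The plan is to reduce the statement to the previous proposition (Proposition~\ref{prop:lamination}), which already gives a local product structure $\Xi \times B(0,C) \to \Omega$ for \emph{some} transversal $\Xi$ containing $T_0$, and then upgrade that transversal to an FLC transversal and verify the Cantor-set property. First I would pass to a space of Delone sets: using the barycenter (or puncture) pointing rule $\ron D$, which is a local derivation by the Example above, I may replace $\Omega$ by the space of punctures $\ron D(\Omega)$ — local product charts and the Cantor property transfer back along the factor map, since $\ron D$ is finite-to-one on transversals in the FLC setting. So assume $\Omega$ is a space of Delone sets and, after translating $T_0$ (which only moves the basepoint, harmless here), that $0 \in T_0$. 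Then I take $\Xi$ to be the canonical transversal $\{\Lambda - \lambda \tq \lambda \in \Lambda,\ \Lambda \in \Omega\}$ and $\eps$ small compared to the Delone radius $r$ (say $\eps = r/3$); the map $B(0,\eps)\times\Xi \to \Omega$, $(x,T)\mapsto T-x$ is exactly the chart from Proposition~\ref{prop:lamination} with the order of the factors swapped, so injectivity and openness of the image are immediate from there (uniform discreteness guarantees that each $\Lambda$ near $T_0$ has a unique point within $\eps$ of the origin, which pins down both $x$ and $T$).

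It remains to check that $\Xi$ is a Cantor set. Compactness: $\Xi$ is closed in the compact space $\Omega$ because having a point exactly at the origin is a closed condition (here FLC, hence the simple metric description, is used: $\Lambda_n \to \Lambda$ with $0\in\Lambda_n$ forces $0\in\Lambda$). Total disconnectedness: this is where FLC is essential — the simpler FLC metric says two Delone sets in $\Xi$ agreeing on $B(0,\eps^{-1})$ (with no shift, since both have a point at $0$ and patches are discrete up to translation) are within $\eps$, and conversely; so the clopen cylinder sets $\{\Lambda \in \Xi \tq \Lambda \cap B(0,N) = P\}$, indexed by the finitely many patches $P$ of radius $N$ containing the origin, form a basis of clopen sets, whence $\Xi$ is totally disconnected. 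No isolated points: this is precisely aperiodicity plus minimality. If $\Lambda \in \Xi$ were isolated, the cylinder $\{\Lambda' \tq \Lambda' \cap B(0,N) = \Lambda \cap B(0,N)\}$ would be the singleton $\{\Lambda\}$ for some $N$; but by minimality the patch $\Lambda\cap B(0,N)$ recurs in $\Lambda$ at some point $\lambda \ne 0$ of $\Lambda$ with $|\lambda|$ as large as we like, and then $\Lambda - \lambda \in \Xi$ is a \emph{different} Delone set (different because $\Lambda$ is aperiodic) lying in that cylinder — contradiction.

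The main obstacle I anticipate is not any single step but making the ``$\Xi$ is clopen-generated'' argument airtight: one must be careful that, on the canonical transversal, agreement of two Delone sets on a large ball really does pin the element down to within a controlled $\eps$ with \emph{no} auxiliary translation needed — this is exactly where the hypothesis $0\in T_0$ and the rigidity of FLC (finitely many patches, the simplified metric) do the work, and it is the only place the proof genuinely differs from the non-FLC Proposition~\ref{prop:lamination}. A secondary technical point is transferring everything back through the pointing-rule factor map $\ron D$ at the start: one should note that $\ron D$ restricted to a canonical transversal of $\Omega$ is a local homeomorphism onto (a clopen subset of) the canonical transversal of $\ron D(\Omega)$ — injective locally because distinct nearby tilings have distinct puncture patterns near $0$ — so the Cantor structure and the product chart pull back verbatim. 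Since the cited reference~\cite{BBG06} already establishes this, I would keep the write-up brief, emphasizing the three defining properties of a Cantor set and pointing to~\cite{BBG06} for details.
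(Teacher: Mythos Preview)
The paper does not prove this proposition; it is simply quoted from~\cite{BBG06} and used as a black box. So there is no ``paper's own proof'' to compare against.

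Your argument is sound and is essentially the standard one. A few remarks. First, the detour through $\ron D(\Omega)$ is unnecessary: you can work directly in $\Omega$ with the FLC transversal $\Xi_{\ron D} = \ron D^{-1}(\Xi')$, and the three Cantor properties are checked on $\Xi_{\ron D}$ itself using the FLC metric on $\Omega$ --- the cylinder sets $\{T\in\Xi_{\ron D}\tq T\cap B(0,N)=P\}$ are clopen and form a basis for exactly the reason you give. This saves you the ``transfer back'' step you flagged as delicate. Second, your ``translate $T_0$'' move is correct but deserves one more sentence: what you really do is replace the canonical pointing rule by its shift $\ron D'(T)=\ron D(T)-x_0$, which is still a local derivation, so the resulting transversal is still FLC and now contains $T_0$. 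Third, in the no-isolated-points argument you invoke minimality to get recurrence of the patch $\Lambda\cap B(0,N)$; strictly speaking you are using that minimality plus FLC implies repetitivity, which is standard but worth a word.
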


It implies in particular that for FLC, aperiodic and minimal tiling spaces, the path-connected components are exactly the $\R^d$-orbits.

Finally, local maps can be characterized in terms of FLC transversals.
\begin{prop}\label{prop:local}
 Let $f: \Omega \ra \Omega'$ be a continuous map between FLC tiling
 spaces.
 It is a local map if and only if the image of any FLC transversal
 is included in an FLC transversal
\end{prop}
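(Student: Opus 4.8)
The plan is to prove both implications by unwinding the definitions of local map, FLC transversal, and local derivation, and leaning on the fact that an FLC transversal is a Cantor set on which the return structure is governed by patches of bounded size.

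For the forward direction, suppose $f$ is a local map and let $\Xi = \ron D^{-1}(\Xi')$ be an FLC transversal of $\Omega$, where $\ron D:\Omega\ra\Omega_0$ is a local derivation into a space of Delone sets. I want to produce an FLC transversal of $\Omega'$ containing $f(\Xi)$. The natural candidate is to push the pointing rule forward. The subtlety is that $f$ need not be a factor map, so I cannot simply compose. Instead, I would first note that, since $\Omega$ is minimal and FLC, $\Xi$ is compact; its image $f(\Xi)$ is therefore a compact subset of $\Omega'$, and $\bigcup_{x\in B(0,\delta)}(f(\Xi)-x)$ is, for small $\delta$, an open neighborhood which by Proposition \ref{prop:lamination} (applied on $\Omega'$) looks locally like a product. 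The key step is to build a pointing rule $\ron D':\Omega'\ra\Omega_0'$ that is a local derivation and whose canonical transversal pulls back to a set containing $f(\Xi)$: concretely, for $T'$ near $f(\Xi)$, locate the (unique, for $\delta$ small) translate $T'-x\in f(\Xi)$, and decorate the origin accordingly, extending to all of $\Omega'$ by minimality and a compactness/continuity argument. Locality of $f$ enters precisely here: it guarantees that whether $T'\in f(\Xi)$, and the decoration to be placed, depends only on $T'$ restricted to a bounded ball, which is exactly what is needed for $\ron D'$ to be a local derivation. Hence $f(\Xi)$ sits inside the FLC transversal $\ron D'^{-1}(\Xi_0')$.

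For the converse, suppose $f$ sends every FLC transversal into an FLC transversal, and fix $r>0$; I must find $R>0$ so that $T\cap B(0,R)=T'\cap B(0,R)$ forces $\phi(T)\cap B(0,r)=\phi(T')\cap B(0,r)$ — here I write $\phi$ for $f$. The idea is: agreement of $T$ and $T'$ on a ball $B(0,R)$ means that $T$ and $T'$ lie in a common small clopen subset of a suitable FLC transversal $\Xi$ through $T$ (using that the metric on FLC tilings is the simple one quoted in the excerpt, and that such clopen sets shrink as $R\to\infty$). By hypothesis $f(\Xi)$ lies in an FLC transversal $\Xi'$ of $\Omega'$, and by continuity of $f$, if $T,T'$ are close enough in $\Xi$ then $f(T),f(T')$ are close in $\Xi'$; but in an FLC transversal, being within a sufficiently small distance forces agreement of the underlying tilings on $B(0,r)$ — that is the content of the simplified FLC metric restricted to a transversal. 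Choosing $R$ large enough to make the clopen neighborhood of $T$ in $\Xi$ map into such a small ball in $\Xi'$ yields the required implication.

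The main obstacle I anticipate is the forward direction: constructing the pushed-forward pointing rule $\ron D'$ on all of $\Omega'$ as a bona fide \emph{local derivation} (hence a factor map commuting with translations), not merely on a neighborhood of $f(\Xi)$. This requires showing that the "locate the nearby translate in $f(\Xi)$ and read off a decoration" recipe is (i) well-defined and single-valued for a uniform $\delta$, which uses uniform discreteness coming from FLC plus compactness of $\Xi$; (ii) translation-equivariant, which is automatic from the construction since we only ever ask about $T'$ up to translation; and (iii) local, i.e., the decoration depends on a bounded patch — this is where locality of $f$ is genuinely used, and it is the step that needs the most care to make rigorous. Everything else is a routine unwinding of the metrics and of Propositions \ref{prop:lamination} and the preceding structural results.
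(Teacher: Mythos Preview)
Your converse direction is essentially the paper's argument: fix a transversal, use that $f$ restricted to it is uniformly continuous, and read off the locality estimate from the description of the induced metric on an FLC transversal. That part is fine.

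The forward direction, however, has a genuine gap. You assert that ``locality of $f$ \ldots\ guarantees that whether $T'\in f(\Xi)$, and the decoration to be placed, depends only on $T'$ restricted to a bounded ball.'' But locality of $f$ is a statement about how $f(T)$ depends on $T$; it does \emph{not} directly tell you that membership of $T'$ in the image $f(\Xi)$ is determined by a bounded patch of $T'$. In fact this need not be true: $f(\Xi)$ can be a proper closed subset of any FLC transversal containing it, and membership in such a subset is typically not a local condition. Your ``locate the nearby translate in $f(\Xi)$'' recipe therefore cannot be turned into a local pointing rule on $\Omega'$ without an additional idea.

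The missing idea, and what the paper does, is a \emph{finiteness} argument. Reduce to the case where $\Xi$ is the set of $T$ with $T\cap B(0,R_1)=P$ for a fixed patch $P$. Take the locality radius $R_2$ of $f$ corresponding to output radius~$1$. By FLC of $\Omega$ there are only finitely many patches $Q_1,\ldots,Q_j$ of radius $R_2$ extending $P$, hence only finitely many possible patches $Q'_i = f(T)\cap B(0,1)$ as $T$ ranges over $\Xi$. One then \emph{defines} the pointing rule on $\Omega'$ by ``$T'\cap B(0,1)$ belongs to the finite list $\{Q'_i\}$''. This is manifestly a local derivation on $\Omega'$, and the resulting FLC transversal $\Xi'$ contains $f(\Xi)$ (it may be strictly larger, which is harmless). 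No product-structure or ``nearby translate'' argument is needed; the whole construction is combinatorial.
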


\begin{proof}
 Let $f$ be a local map between FLC tiling spaces.
 Let $\Xi$ be an FLC transversal. 
 By definition, there is a local pointing rule $\ron D$ such
 that for all $T$, $\ron D (T)$ is a Delone set locally derived from $T$.
 Let $R_1$ be the constant of locality of $\ron D$ in the sense that whenever two tilings agree up to distance $R_1$, their images by $\ron D$ agree up to distance $1$.
 Now, we claim that the set
  \(
   \bigl\{ T \cap B(0,R_1) \ : \ T \in \Xi \bigr\}
  \)
 is finite.
 Assume, in search of a contradiction that it is infinite.  By
 compactness of $\Omega$, there is a sequence $(T_n)_{n \in \N}$ of
 tilings in $\Xi$ which all disagree within distance $R_1$ but
 converges in $\Omega$, and is therefore a Cauchy sequence.  So for
 all $\eps > 0$, there exists $n,m$ big enough and $x,y \in \R^d$ such
 that $T_n - x$ and $T_m - y$ agree up to $R_1+2\eps$ with $x$ and $y$
 smaller than $\eps$. Note also that $x \neq y$ since $T_n$ and $T_m$
 must disagree within radius $R_1$.  Therefore, $0 \in \ron D(T_n)$
 (because $T_n \in \Xi$) and $0 \in \ron D (T_n - x + y)$ (because
 this tiling agrees up to radius $R_1$ with $T_m \in \Xi$).  Since
 $\ron D$ is a factor map, both $0$ and $x - y$ belong to $\ron
 D(T_n)$. Now, $\nr{x - y} < 2\eps$ which can be picked arbitrarily
 small. This contradicts the Delone property of $\ron D(T_n)$.

 We let $P_i$ be the finitely many local configurations (of the form $T_i \cap B(0,R_1)$ for some $T_i \in \Xi$) such that
 \(
  0 \in \ron D(T) \Leftrightarrow \exists i, \ T \cap B(0,R_1) = P_i
 \).
 We let $\Xi_i$ be the sub-transversal of $\Xi$ defined by $T \in \Xi_i \Leftrightarrow T \cap B(0, R_1) = P_i$. It is enough to show that for all $i$, $f(\Xi_i)$ is a FLC transversal (as a finite union of FLC transversals is itself a FLC transversal).
 We therefore assume, to simplify notation and without loss of generality, that $i=1$ and simply write $P = P_i$, $\Xi = \Xi_i$.
 Let $R_2$ be the constant of locality for the map $f$ such that for all tilings $T_1, T_2$,
 \[
  T_1 \cap B(0,R_2) = T_2 \cap B(0,R_2) \Rightarrow
    f(T_1) \cap B(0,1) = f(T_2) \cap B(0,1).
 \]
 By FLC, there are finitely many patches $Q_1, \ldots, Q_j$ of radius at least $R_2$ which extend $P$,
 that is $Q_i \cap B(0,R_1) = P$.
 Therefore, by locality of $f$, there are finitely patches of the form
 $Q'_i = f(T) \cap B(0,1)$, for $T \in \Xi$.
 Define the following pointing rule on $\Omega'$:
 \[
 \begin{split}
  0 \in \ron D' T' & \Longleftrightarrow \exists T \in \Xi, \ 
  T' \cap B(0,1) = f(T) \cap B(0,1) \\
     & \Longleftrightarrow \exists i, \ T' \cap B(0,1) = Q'_i.
 \end{split}
 \]
 This pointing rule defines an FLC transversal $\Xi'$ which
 contains $f(\Xi)$.
 
 Conversely, assume $f$ is continuous, and the image of any
 FLC transversal is included in an FLC transversal.
 Let $T_0 \in \Omega$.
 Define a pointing rule on $\Omega$ by $0 \in \ron D (T')$
 if and only if $T' \cap B(0,1) = T_0 \cap B(0,1)$.
 Then, the image by $f$ of $\Xi$ is included in an FLC transversal
 $\Xi'$, associated with a pointing rule $\ron D'$.
 The map $f$ induces a continuous map $\Xi \ra \Xi'$. It is
 uniformly continuous, and given how the topology restricts
 on FLC transversals, uniform continuity reads: 
  $\forall r > 0, \ \exists R > 0, \ \forall T_1, T_2 \in \Xi,$
 \begin{displaymath}
  \bigl( T_1 \cap B(0,R) = T_2 \cap B(0,R) \bigr) \Longrightarrow
    \bigl( f(T_1) \cap B(0,r) = f(T_2) \cap B(0,r) \bigr),
 \end{displaymath}
 which is precisely the locality condition for $f$.
\end{proof}

\section{Tiling cohomologies}\label{sec:cohomology}

There are several cohomology theories for a tiling space $\Omega$,
which mostly yield isomorphic cohomology groups.
At first, we only consider \emph{aperiodic, minimal, FLC} tiling
spaces, and will later in this section discuss cohomology for non-FLC
tiling spaces.

To begin with, we may consider the \v Cech cohomology
$\check H^*(\Omega; A)$ with values in an Abelian group $A$.
We will restrict our discussion to $A = \R$ or $\R^d$.
This is computed from the
combinatorics of open covers of $\Omega$, and is manifestly a
homeomorphism invariant. The trouble with \v Cech cohomology is that
it is difficult to relate the cohomology of the tiling {\em space} to
properties of individual {\em tilings}.

Next there is the (strong) {\em pattern-equivariant} cohomology of $\Omega$
with values in $A$, denoted $H^*_{PE,s}(\Omega; A)$.  Let $T$ be an FLC
tiling of $\R^d$ whose orbit is dense in $\Omega$. The tiling $T$ gives a
decomposition of $\R^d$ as a CW complex. The 0-cells are the vertices
of the tiling, the 1-cells are the edges, and so on.

A $k$-cochain $\alpha_k$ on $T$ is an assignment of an element of $A$ to each
$k$-cell of $T$.
The coboundary of a cochain is taken in the sense of cellular cohomology
for the $CW$-decomposition of $\R^d$ given by $T$.
The cochain $\alpha$ is called (strongly) {\em pattern
  equivariant} (or PE) if there exists a distance $R$ such that the
value of $\alpha$ on each cell depends only on the patch of radius $R$
around the cell. That is, if there are cells $c_1$ and $c_2$ centered
at points $x_1$ and $x_2$, and if the patch of radius $R$ centered
around $x_1$ matches the patch of radius $R$ centered at $x_2$, then
$\alpha(c_1)=\alpha(c_2)$. It is not hard to see that the coboundary
of a PE cochain is PE (with perhaps a slightly different radius), so
the PE cochains form a differential complex.  $H^*_{PE,s}(T; A)$ is the
cohomology of this complex. It is known~\cite{Kel03,CS06}
that $H^*_{PE,s}(T; A)$ is canonically isomorphic to
$\check H^*(\Omega; A)$ for FLC tiling spaces.
Thus, if $\Omega$ is an aperiodic, FLC, minimal tiling
dynamical system, then its \v Cech cohomology may be computed from the
pattern equivariant cohomology of \emph{any} tiling in $\Omega$. 
Moreover, PE cochains on any one tiling $T$ extend by continuity to PE 
cochains on all other tilings in $\Omega$.
We can thus speak of the complex of PE cochains on $\Omega$,
rather than on a specific tiling, and denote 
the cohomology $H^*_{PE,s}(\Omega; A)$. 

When $A$ is $\R$ or $\R^d$, PE cohomology can also be defined using
differential forms. (Indeed, this is how PE cohomology was first
introduced; see~\cite{Kel03, KP06} for the definition and an isomorphism
with \v Cech cohomology.) One considers functions and differential
forms on $\R^d$ with values in $A$, which are PE with respect to a tiling $T$,
 and constructs the de Rham complex.
This yields a cohomology $H^*_{dR,s}(\Omega; A)$ 
isomorphic to $H^*_{PE,s}(\Omega; A)$, the version of PE cohomology based
on cellular cochains.
Indeed, given a PE $k$-form, we can construct a PE
cochain by integrating the form over $k$-cells.  Conversely, given a
PE cochain we can obtain a PE $k$-form whose integral is that
cochain, \emph{e.g.}, by linearly interpolating across tiles.  These
operations give explicit isomorphisms between the form-based and
cochain-based PE cohomologies.

In this case, we can also consider {\em weakly} pattern equivariant
cochains or forms (weakly PE, or w-PE), which are defined to be
uniform limits of strongly PE cochains or forms.\footnote{For
  $C^\infty$ forms, we mean by this convergence for all the semi-norms
  $\|f\|_{(k)} = \|f^{(k)}\|_\infty$. 
That is, we assume that the
derivatives of the forms of all orders are uniform limits of PE
forms. For PE-cochains, we mean
  convergence for the norm $\|\alpha\| = \sup \{ \alpha(c) \ : \ c
  \text{ chain} \}$.} 
This gives a cohomology theory that we call {\em weak} PE
cohomology, denoted $H^*_{PE,w}$ or $H^*_{dR,w}$.  This usually is
{\em not} isomorphic to the \v Cech cohomology of $\Omega$. Rather, it
corresponds to the ``dynamical'' Lie-algebra cohomology of $\R^d$ with
values in the space of continuous functions on $\Omega$~\cite{KP06}.
The group $H^*_{PE,w}$ is usually infinitely generated, even for very
simple tiling spaces, as will be illustrated in
Section~\ref{sec:examples}.

When the tilings have infinite local complexity, then it is
unreasonable to expect patches to be identical, and strong pattern
equivariance is not usually well-defined. However, a version of weak
pattern equivariance always makes sense, as we will see.
We present this using an adaptation of groupoid cohomology.
For the purpose of this article and to avoid lengthly discussion,
we focus on the group $H^1$, with coefficients in $\R^d$.

Let $T$ now be a given tiling in a minimal, aperiodic tiling space $\Omega$,
which may or may not have finite local complexity.

\begin{defin}
 If $T$ has finite local complexity,
 a continuous function $f: \R^d \ra \R^d$ is called
 \emph{strongly $T$-equivariant} (or simply \emph{pattern
 equivariant} or PE) if there exists a radius
 $R$ such that $f(x) = f(y)$ whenever $T \cap B(x,R) = T \cap B(y,R)$ (up
 to translation).
 
 Given a tiling $T$ with or without finite local complexity, a
 continuous function $f$
 is called \emph{weakly pattern equivariant} (weakly PE or w-PE)
 if $f(x_n) \ra f(x)$ whenever $x_n$ is a sequence
 satisfying $T-x_n \ra T-x$ in the tiling space of $T$. When
 $T$ has FLC, weakly PE functions are uniform limits of strongly
 PE functions.
\end{defin}

\begin{defin}
\label{def:dynam-cocycle}
 Let $T$ be an aperiodic tiling with minimal tiling space.
 A $T$-equivariant weak (resp.\@ strong\footnote{Under the additional assumption that $T$ has FLC}) dynamical 
$1$-cocycle with values in $\R^d$ is a
 function $\beta: \R^d \times \R^d \ra \R^d$ which satisfies
 \begin{itemize}
  \item \emph{Cocycle condition:} $\beta(x,v) + \beta(x+v,w) = \beta(x,v+w)$;
  \item \emph{Pattern equivariance:} for all $v$, the function
  $x \mapsto \beta(x,v)$ is weakly (resp.\@ strongly) pattern equivariant.
 \end{itemize}
 This cocycle is a \emph{weak (resp.\@ strong) $1$-coboundary} if there exists a
 continuous, weakly (resp.\@ strongly) pattern-equivariant function
 $s: \R^d \ra \R^d$ such that for all $(x,v) \in \R^d \times \R^d$,
 $\beta(x,v) = s(x+v) - s(x)$.
\end{defin}

Note that strong $1$-cocycles can be weak coboundaries, even when
they are not strong coboundaries.

\begin{rem}
 The weak pattern equivariance condition is a continuity condition
 for an appropriate topology: it is possible to embed
 $\R^d \times \R^d$ in $\Omega \times \R^d$ \emph{via} the map
 $(x,v) \mapsto (T-x, v)$.
 By minimality, this embedding has dense image.
 A function defined on $\R^d \times \R^d$ will then extend continuously
 to a function on $\Omega \times \R^d$ if and only if it is weakly
 PE.
 Similarly, a PE $0$-cochain can be seen as a continuous function on
 $\Omega$.
\end{rem}

As a consequence of this remark, it is possible to describe these
PE cochains as follows:
\begin{itemize}
 \item a weak PE $1$-cocycle $\alpha: \R^d \times \R^d$
 corresponds to a continuous function (still called $\alpha$ by abuse of notation)
 $\alpha: \Omega \times \R^d \ra \R^d$ such
 that $\alpha(T,x) + \alpha(T-x,y) = \alpha(T,x+y)$;
 \item a weak $0$-cochain corresponds to a continuous function $s: \Omega \ra \R^d$;
 \item the co-boundary of a $0$-cochain in this picture is defined by 
 \(
  \delta s (T,v) = s(T-v) - s(T)
 \).
\end{itemize}
This shows that the groups of cocycles and coboundary do not
depend on the particular $T$ used to define pattern-equivariance.

The notion of strong pattern equivariance is a bit more delicate.
Strongly pattern equivariant functions on $\R^d$ correspond
to continuous functions on $\Omega$ that are \emph{transversally
locally constant}, in the sense that their restriction to any FLC
transversal is locally constant. (This definition requires in particular
that $T$ has FLC.)

\begin{defin}
  Given a minimal, aperiodic tiling space $\Omega$ with or without FLC,
  its dynamical $1$-cohomology group $H^1_{d, w} (\Omega;\R^d)$
  is the quotient of the space of weak dynamical $1$-cochains by
  the space of weak coboundaries.
  
  When the tiling space has FLC, we can define in addition
  $H^1_{d,s}$ as the quotient of the strong dynamical $1$-cochains
  by the strong coboundaries.
\end{defin}

It will be convenient to define cohomology by using slightly smaller
groups.
Let $\Omega$ be a tiling space and $\Xi$ a transversal.
Let $\ron D$ be the associated pointing rule.
Let $T$ be a fixed tiling, so that $\ron D (T) \subset \R^d$ is a Delone set.
Let $(\R^d \times \R^d)_{|\Xi}$ be the set of all $(x,v) \in \R^d \times \R^d$
such that $x \in \ron D(T)$ and $x+v \in \ron D(T)$.
We call \emph{restriction to the transversal $\Xi$} the restriction of
$1$-cochains to this subset. Similarly, PE $0$-cochains can be restricted
to $\ron D (T)$.
It is important to note that, while we restrict cochains a certain subset
of $\R^d \times \R^d$ determined by $\ron D(T)$, the pattern-equivariance
condition for these cochains is still defined in terms of $T$ and \emph{not}
in terms of $\ron D(T)$. Therefore, the corresponding cohomology group
will describe the topology of $\Omega$, and not the topology of
the tiling space of $\ron D (T)$.
When dealing with strongly PE cochains, we require that $\Xi$ is an FLC
transversal, so that $\ron D (T) \subset \R^d$ has FLC.

The following result shows that we can use either cochains on
$\R^d \times \R^d$ or restricted cochains to represent an element
in $H^1_{d,s/w}(\Omega; \R^d)$

\begin{prop}\label{allthesamecohomology}
 Let $T$ be a tiling with or without FLC , such that its tiling
 space $\Omega$ is minimal and aperiodic.
 Let $\Xi \subset \Omega$ be a transversal.
 We define
  $\mathcal Z^1_{w,\Xi}(\Omega;\R^d)$ to be the set of weakly PE dynamical $1$-cochains
  on $(\R^d \times \R^d)_{|\Xi}$ (\emph{restricted} cochains), and
  $\mathcal B^1_{w,\Xi}(\Omega;\R^d)$ the set of weakly PE restricted coboundaries.
  
 Then restriction of $1$-cochains on $\Xi$ induces an
 isomorphism 
 \[
  H^1_{d,w}(\Omega; \R^d) \lra \mathcal Z^1_{w,\Xi}(\Omega;\R^d) / \mathcal B^1_{w,\Xi}(\Omega;\R^d).
 \]
 
 Whenever $T$ has FLC, and $\Xi$ is an FLC transversal,
 the same statement holds for the strong cohomology groups, \emph{i.e.}, the
 group $H^1_{d,s}(\Omega;\R^d)$ is isomorphic \emph{via} the restriction map
 to the quotient of strongly PE restricted cochains by strongly PE restricted
 coboundaries.
\end{prop}

\begin{proof}
 We show the isomorphism for weak cochains. The strongly PE case is
 similar.
  Let $T$ be fixed, for which cochains are $T$-equivariant.
  Without loss  of generality, assume $0 \in \ron D (T)$.
  Clearly a $1$-cochain (resp.\@ coboundary) defined on $\R^d \times
  \R^d$ restricts to a cochain (resp.\@ coboundary). So there is a
  well-defined map from the associated cohomology groups.
 
  To show that the map is one-to-one and onto, we essentially follow the
  ideas of~\cite[Lemma 2.6]{Kel08}. The results were stated in the FLC
  case, but extend to the general case.
 
  The map is onto: let $\beta$ be a restricted $1$-coboundary.
  The idea, as in~\cite[Lemma 2.6, (3)]{Kel08}, is to build a function $\phi: \R^d \ra \R^d$  such that $\phi(\lambda) = \beta(0,\lambda)$ for all $\lambda \in \ron D (T)$.
  The function $\phi$ is obtained by interpolating first $\lambda \mapsto \beta(0,\lambda)$ by a locally constant function (which is constant on the interior of each Voronoi cell of $\ron D(T)$), and then smoothing it by a compactly supported positive function of integral~$1$.
  The cochain $\alpha$ is given by $\alpha(x,v) := \phi(x + v) - \phi(x)$.
  
  We detail the construction now.
  First, remark that because of the continuity of the pointing rule, if $T-x$ and $T-y$ are close to each other, then the Delone sets $\ron D(T) - x$ and $\ron D(T)-y$ are also close to each other.
  Besides, assume $x,y \in \ron D(T)$, let $t_x, t_y$ be the Voronoi cells of $x$ and $y$, \emph{i.e.}, the set of points which are closer from $x$ (resp.\ $y$) than any other point in $\ron D(T)$.
  If $\ron D(T) - x$ and $\ron D(T) - y$ are very close from eath other, then $t_x - x$ and $t_y - y$ are also close to each other for the Hausdorff distance and therefore, the indicator functions $\chi_{t_x - x}$ and $\chi_{t_y - y}$ are close for the $L^1$ norm.
  
  Starting from the cochain $\beta$, define $\psi = \sum_{u \in \ron D(T)} \beta(0,u) \chi_{t_u}$.
  For $x, v \in \R^d$ consider the function $\tilde \alpha(x,v) = \psi(x+v) - \psi(x)$, so that the cocycle $\alpha$ is given by $\alpha(\cdot, v) = \tilde \alpha(\cdot, v) * \rho$ for $\rho$ a smooth positive function of integral 1, supported on a small ball around the origin.
  It is easy to check that $\alpha$ is a cochain, and if the support of $\rho$ is small with respect to the uniform discreteness constant of $\ron D(T)$, then $\alpha$ restricts to $\beta$.
  
  We now show pattern equivariance. Fix $v \in \R^d$.
  \[
   \tilde \alpha(\cdot, v) = \sum_{u \in \ron D(T)} \beta(0,u) \bigl[ \chi_{t_u - v} - \chi_{t_u} \bigr].
  \]
  Assume $x, y \in \R^d$ are such that $T-x$ and $T-y$ are very close, to the point that $T-x-v$ and $T-y-v$ are also very close.
  Then, the Voronoi tilings around these points are also very close, and by definition of closeness for ILC tilings, there is a one-to-one correspondence between Voronoi tiles in a neighborhood of $x$ and Voronoi tiles in a neighborhood of $y$.
  A similar statement holds for tiles neighbor to $x+v$ and $y+v$.
  Therefore, for any pair of Voronoi tiles $(t_x, t'_x)$ with $t_x$ within distance $1$ of $x$ and $t'_x$ within distance $1$ of $x+v$, there exists a pair $(t_y, t'_y)$ with $t_y$ close to $y$ and $t'_y$ close to $y+v$. In addition, $t_x - x$ and $t_y - y$ are Haudsorff-close, and their indicator functions are $L^1$-close (similarly, the indicator functions of $t'_y$ and $t'_x$ are $L^1$-close).
  Now, in a neighborhood of $x$,
  \[
  \begin{split}
   \tilde \alpha(\cdot, v) & = \sum_{(t_x, t'_x)}
    \bigl( \beta(0,p_{t'_x}) - \beta(0,p_{t_x})  \bigr) \chi_{t_x}\chi_{t'_x - v} \\
    & = \sum_{(t_x, t'_x)} \beta(p_{t_x}, p_{t'_x} - p_{t_x})
    \chi_{t_x}\chi_{t'_x - v}
  \end{split}
  \]
  where the sum is taken over all pairs of tiles $(t_x, t'_x)$ as defined above, and $p_t$ is the point $\ron D(T)$ defining the Voronoi tile $t$.
  A similar formula holds for the value of $\tilde \alpha (\cdot, v)$ in a neighborhood of $y$.
  
  Now, if $T-x$ and $T-y$ are close enough, each pair $(t_x, t'_x)$ in the sum above corresponds to a pair of tiles $(t_y, t'_y)$ sitting respectively around $y$ and $y+v$. Furthermore, for each $t_x$, $T-p_{t_x}$ and the corresponding $T-p_{t_y}$ are close in the tiling distance. In addition, the vectors $p_{t'_x} - p_{t_x}$ and $p_{t'_x} - p_{t_y}$ are close to each other in $\R^d$.
  Therefore, since $x \mapsto \beta(x,u)$ is PE for any fixed $u$, and $\beta$ is continuous of two variables, then for each corresponding pairs $(t_x, t'_x)$ and $(t_y, t'_y)$, the indicator functions
  \[
   \bigl( \beta(p_{t_x}, p_{t'_x} - p_{t_x}) \bigr) \chi_{t_x}\chi_{t'_x - v} 
   \text{ and }
   \bigl( \beta(p_{t_y}, p_{t'_y} - p_{t_y}) \bigr) \chi_{t_y}\chi_{t'_y - v}
  \]
  are close in the $L^1$ norm.
  We deduce, by properties of the convolution, that the values of the cocycle $\alpha( \cdot, v) = \tilde \alpha(\cdot, v) * \rho$ at $x$ and $y$ are very close, provided $T-x$ and $T-y$ are close enough.
  Therefore, $\alpha$ is PE.
   
  So $\alpha$ is a $1$-cochain defined on $\R^d \times \R^d$ which
  restricts to $\beta$.
 
  The map is one-to-one: this is a similar argument on coboundaries
  using the ideas of~\cite[Lemma 2.6, (1)]{Kel08}.
\end{proof}

\begin{thm}\label{thm:allsame}
  For a FLC, minimal, aperiodic tiling space, the weak cohomology
  groups $H^1_{PE,w} (\Omega; \R^d)$, $H^1_{dR,w}(\Omega; \R^d)$ and
  $H^1_{d,w}(\Omega;\R^d)$ are isomorphic.  Similarly, the strong
  cohomology groups $H^1_{PE,s} (\Omega; \R^d)$, $H^1_{dR,s}(\Omega;
  \R^d)$ and $H^1_{d,s}(\Omega;\R^d)$ are isomorphic.
\end{thm}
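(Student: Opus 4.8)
The plan is to establish the two strings of isomorphisms by constructing explicit chain-level maps between the three models in each case, using the concrete translations between pictures that the preceding discussion has already sketched, and then checking that these maps are mutually inverse on cohomology. I would fix once and for all a tiling $T$ with dense orbit in $\Omega$, use it to define all three versions of pattern equivariance, and rely on Proposition~\ref{allthesamecohomology} to pass freely between cochains on $\R^d \times \R^d$ and cochains restricted to the transversal whenever that is technically convenient.

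\emph{Step 1: de Rham versus cochain PE cohomology.} First I would treat $H^1_{dR,\ast} \cong H^1_{PE,\ast}$ (both $\ast = s$ and $\ast = w$) by the integration map: given a PE $1$-form $\omega$ with values in $\R^d$, send it to the cellular $1$-cochain $c \mapsto \int_c \omega$ on the CW-decomposition induced by $T$. This is manifestly a chain map (Stokes' theorem gives $\delta(\int \omega) = \int d\omega$), it is strongly PE when $\omega$ is (the value on a cell depends only on $\omega$ near that cell) and weakly PE when $\omega$ is weakly PE (uniform limits of PE forms integrate to uniform limits of PE cochains, using that cells have bounded size). The inverse on cohomology is the interpolation map already mentioned: extend a PE cochain to a PE $1$-form by choosing a PE partition of unity subordinate to the tiles and linearly interpolating; standard de Rham-type arguments show the two composites are chain-homotopic to the identity, and the homotopies preserve (strong or weak) pattern equivariance because they are built tile-by-tile from local data.

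\emph{Step 2: cochain PE cohomology versus dynamical cohomology.} Next I would identify $H^1_{PE,\ast}(\Omega;\R^d)$ with $H^1_{d,\ast}(\Omega;\R^d)$. The bridge is to push a cellular PE $1$-cochain $\alpha_{\mathrm{cell}}$ forward to a dynamical cocycle $\beta(x,v) = \int_{[x,x+v]} \omega_\alpha$, where $\omega_\alpha$ is a PE $1$-form representing $\alpha_{\mathrm{cell}}$ as in Step~1 (equivalently, use the path-integral of the discrete cochain along an edge-path from near $x$ to near $x+v$ and check path-independence up to a PE coboundary). One checks the cocycle identity $\beta(x,v) + \beta(x+v,w) = \beta(x,v+w)$ directly from additivity of the integral, and pattern equivariance of $x \mapsto \beta(x,v)$ from that of $\omega_\alpha$; in the FLC case one notes that transversally-locally-constant functions correspond exactly to strongly PE cochains, matching the two notions of ``strong.'' For the reverse direction, restrict a dynamical cocycle to the transversal (Proposition~\ref{allthesamecohomology}), obtaining a map $\lambda \mapsto \beta(0,\lambda)$ on a Delone set, and recognize this as (the integral of) a PE $1$-cochain on the dual tiling; Kellendonk's Lemma~2.6 as cited in the proof of Proposition~\ref{allthesamecohomology} is exactly what lets one go back and forth without losing pattern equivariance. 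Verifying that these constructions descend to cohomology and are mutually inverse is then a diagram-chase using that a dynamical coboundary $\delta s$ corresponds to the exact form $ds$.

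\emph{Main obstacle.} The routine part is the $\R$-coefficient-vector-space bookkeeping; the genuinely delicate point is keeping the \emph{regularity class} exactly matched at every step — in particular showing that the back-and-forth maps in Step~2 send \emph{weakly} PE objects to weakly PE objects and \emph{strongly} PE objects to strongly PE objects, and no finer or coarser class. For the weak case this means controlling uniform limits under integration and interpolation over regions of unbounded diameter (as $v \to \infty$), and for the strong case it means carefully invoking finite local complexity to see that ``value depends on a bounded patch'' is preserved by interpolation and by restriction to an FLC transversal. I expect this to be where essentially all the work lies; once the regularity is tracked correctly, the isomorphism statements follow by the same arguments that establish the classical de Rham theorem.
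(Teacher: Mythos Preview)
Your outline is correct and follows essentially the paper's route, but the paper's execution of your Step~2 is more direct and dissolves what you flag as the main obstacle. Instead of passing through a de~Rham representative, the paper simply evaluates a dynamical cocycle $\alpha$ on each edge $e=(x,v)$ of the CW decomposition to produce a cellular cochain $\beta(e)=\alpha(x,v)$; conversely, given a cellular $1$-cocycle $\beta$, it sums $\beta$ over an edge-path between two vertices $x$ and $x+v$ to define a \emph{restricted} dynamical cocycle (path-independence from closedness), and then invokes Proposition~\ref{allthesamecohomology} to pass from restricted to unrestricted. Your worry about controlling regularity ``over regions of unbounded diameter as $v\to\infty$'' is then seen to be illusory: the pattern-equivariance condition on a dynamical cocycle is imposed for each fixed $v$ separately, not uniformly, and in the forward direction edges have uniformly bounded length anyway. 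So the regularity bookkeeping, which you expected to be where ``essentially all the work lies,'' is in fact immediate once the maps are written down at this discrete level.
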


\begin{proof}
 Let $T$ be fixed. When we mention pattern equivariance, it will be with
 respect to this $T$.
 
 We prove isomorphism of the dynamical cohomology (represented by dynamical cocycles
 as defined in Definition~\ref{def:dynam-cocycle}), and PE cellular cohomology
 (represented by cellular cochains of $\R^d$, which are PE with respect to $T$).
 An isomorphism between PE cellular and PE de Rham is given in~\cite{BK10}, both
 for the weak and strong cohomologies.
 
 The tiling $T$ gives a $CW$ decomposition of $\R^d$ in which an oriented edge $e$
 can be described by an origin $x$ and a vector $v$, and we write $e=(x,v)$
 (so that $-e = (x+v, -v)$).
 Given a dynamical $1$-cocycle $\alpha: \R^d \times \R^d \ra \R^d$,
 define a cellular $1$-cochain by $\beta(e) = \alpha(x,v)$.
 It is an exercise to show that $\beta$ is indeed a (cellular) cocycle, and is
 PE (strongly when $\alpha$ is, weakly otherwise).
 
 Conversely, if $\beta$ is a cellular $1$-cocycle, let us define a
 dynamical $1$-cocycle restricted to the Delone set $\ron D(T)$ of all
 $0$-cells of $T$.  If $x$ and $x+v$ are in $\ron D(T)$, there is a
 finite path of edges with appropriate orientations $\{e_i\}_i =
 \{(x_i,v_i)\}_i$ such that $x_1=x$, $x_i+v_i = x_{i+1}$ and $\sum_i
 v_i = v$.  Then define $\alpha (x,v) = \sum_i \beta (e_i)$. By the
 cellular cocycle condition, this sum is independent of path and only
 depends on $(x,v)$.  The resulting map $\alpha$ is a restricted dynamical
 $1$-cocycle which is PE with respect to the first
 variable. It is weakly or strongly PE depending on $\beta$.  Note
 that, by the previous proposition, this restricted $1$-cocycle $\alpha$
can be interpolated to a PE $1$-cocycle on all of $\R^d \times \R^d$.
 
 An identification of the $0$-cochains and of the boundary map is checked
 similarly.
\end{proof}

\begin{rem}
  What was actually described, in the discussion above, was the
  groupoid cohomology of $\Omega \rtimes \R^d$ (or its reduction on a
  transversal $\Xi$) using two resolutions: one involving continuous
  functions and one involving transversally locally constant
  functions. It is an element of folklore (which was explained to us by
  John Hunton~\cite{Hunton-talk})
  that these cohomologies are isomorphic to the already investigated weak and strong
  cohomologies---not just in dimension~$1$. For example, one can see
  $\Omega$ as a model for the classifying space of the reduction of
  $\Omega \rtimes \R^d$ to $\Xi$.  However, caution
  should be exercised as there is more than one groupoid cohomology
  theory and not all theories give isomorphic cohomology groups for a
  groupoid and its reduction.  This is why explicit isomorphisms were
  given, at least for $H^1$.
 
 In light of Theorem \ref{thm:allsame}, we will henceforth usually 
 write $H^1_s(\Omega;\R^d)$ and $H^1_w(\Omega;\R^d)$
 for the strong and weak first cohomology groups, keeping in mind that
 at least in the FLC case, there are many pictures for describing
 these groups.
\end{rem}

The following result will be used a few times.  In one-dimensional
tiling spaces, it is
essentially the classical Gottchalk--Hedlund Theorem.  In higher
dimensions, it can be deduced from generalizations of the
Gottschalk--Hedlund Theorem. A direct proof in a related setting
(with FLC tilings and cochains that are assumed to be closed and
strongly PE) may be found in~\cite{KS14}.
\begin{prop}\label{prop:bounded-coboundary}
A  $1$-cocycle $\alpha$ is a $1$-coboundary if and only if it is bounded.
In the terminology of dynamical cohomology, a $1$-cocycle is a
$1$-coboundary if and only if for some $x$ (equivalently, for all
$x$), the map
$v \mapsto \alpha(x,v)$ is bounded.
\end{prop}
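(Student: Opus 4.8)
The plan is to prove both directions, the easy one being that a $1$-coboundary is bounded. If $\alpha(x,v) = s(x+v) - s(x)$ for a continuous, (weakly) pattern-equivariant $s: \R^d \to \R^d$, then by the remark following the definition of PE cochains, $s$ extends to a continuous function on the compact space $\Omega$, hence is bounded, say $\nr{s} \le M$. Then $\nr{\alpha(x,v)} \le 2M$ for all $x,v$, so in particular $v \mapsto \alpha(x,v)$ is bounded for every $x$. (In the strong case, $s$ is transversally locally constant and still bounded by compactness, so the same estimate holds.)

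For the converse, the plan is to reduce to a Gottschalk--Hedlund-type statement on the dynamical system $(\Omega, \R^d)$. First I would pass, via Proposition~\ref{allthesamecohomology} and Theorem~\ref{thm:allsame}, to the dynamical cocycle picture, viewing $\alpha$ as a continuous $\alpha: \Omega \times \R^d \to \R^d$ satisfying $\alpha(T,x) + \alpha(T-x,y) = \alpha(T,x+y)$. Boundedness of $v \mapsto \alpha(x,v)$ for a single $x$ (equivalently, on a dense orbit, hence by continuity and compactness boundedness of $(T,v) \mapsto \alpha(T,v)$ on $\Omega \times \R^d$, at least uniformly in $T$ for each fixed $v$ — this equivalence itself needs a short argument using the cocycle identity and minimality) is exactly the hypothesis of the Gottschalk--Hedlund theorem for $\R^d$-actions: a continuous cocycle with bounded orbit is a coboundary, i.e. there is a continuous $s: \Omega \to \R^d$ with $\alpha(T,v) = s(T-v) - s(T)$. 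Setting $s(x) := s(T_0 - x)$ for a fixed reference tiling $T_0$ gives a weakly PE $0$-cochain whose coboundary is $\alpha$. For the strong case, one must additionally check that $s$ can be taken transversally locally constant when $\alpha$ is strongly PE; here I would either invoke the direct proof of \cite{KS14} in the FLC setting, or argue that since $\alpha$ restricted to an FLC transversal is locally constant, the primitive $s$ built by summing $\alpha$ along paths in the Delone set $\ron D(T)$ inherits local constancy on the transversal.

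The main obstacle is the precise formulation and invocation of the higher-dimensional Gottschalk--Hedlund theorem: the classical statement is for $\Z$- or $\R$-actions, and one needs a version for $\R^d$-cocycles where "bounded orbit" is the right hypothesis. The subtlety is that boundedness of $v \mapsto \alpha(x,v)$ for one $x$ must be upgraded to a statement strong enough to run the argument — one uses the cocycle relation $\alpha(x, v+w) = \alpha(x,v) + \alpha(x+v, w)$ together with minimality to propagate the bound, and then the standard closed-graph / fixed-point argument (the sup over the orbit closure of an appropriate orbit of a point in $\Omega \times \R^d$ under the skew product is attained and gives the continuous primitive). I expect to cite \cite{KS14} for a clean treatment in the FLC/strong case and to note that the weak case follows from the same argument run with merely continuous (rather than transversally locally constant) primitives, the compactness of $\Omega$ supplying the boundedness needed to close the argument.
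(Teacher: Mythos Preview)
The paper does not actually give a proof of this proposition: it merely remarks that in dimension one this is the classical Gottschalk--Hedlund theorem, that in higher dimensions it can be deduced from generalizations thereof, and that a direct proof in the FLC/strongly PE setting appears in \cite{KS14}. Your proposal follows precisely this route---the easy direction via compactness of $\Omega$, then the converse by reduction to a Gottschalk--Hedlund-type argument for the $\R^d$-action, with \cite{KS14} invoked for the strong case---and in fact supplies more detail than the paper itself. Your sketch is correct and aligned with the paper's stated approach.
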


We end this discussion on cohomology by providing an explicit isomorphism
between the dynamical $H^1$ groups and the de~Rham $H^1$ groups which
Kellendonk defined using pattern-equivariant forms.

\begin{prop}\label{prop:dR-cochains}
  Let $\omega$ be a closed, pattern-equivariant $1$-form on $\R^d$.
  Let $f$ be a differentiable (\emph{a priori} not PE) function on
  $\R^d$ such that $df = \omega$.  Then, the function $\alpha(x,v) =
  f(x+v)-f(x)$ is a $1$-cocycle in the groupoid picture. It is
  weakly/strongly PE if $\omega$ is, and the map $\omega \mapsto
  \alpha$ induces an isomorphism in cohomology.
\end{prop}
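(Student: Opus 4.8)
The plan is to check directly that $\alpha$ is a pattern-equivariant dynamical $1$-cocycle, to verify that $\omega \mapsto \alpha$ descends to cohomology, and then to identify the induced map with a composite of isomorphisms already constructed in Theorem~\ref{thm:allsame} and Proposition~\ref{allthesamecohomology}.

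First I would record the bookkeeping. Since $\R^d$ is simply connected, a closed $1$-form $\omega$ admits a primitive $f$ with $df = \omega$, unique up to an additive constant, so $\alpha(x,v) = f(x+v) - f(x)$ is independent of the choice of $f$. The cocycle identity $\alpha(x,v) + \alpha(x+v,w) = \alpha(x,v+w)$ is an immediate telescoping. The formula I would lean on is the line-integral representation
\[
 \alpha(x,v) = \int_0^1 \tfrac{d}{dt} f(x+tv)\,dt = \int_0^1 \omega_{x+tv}(v)\,dt = \int_{[x,x+v]} \omega ,
\]
which follows from $df = \omega$ and the chain rule.

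Next comes the step that needs the most care, pattern equivariance. For fixed $v$, the function $x \mapsto \omega_x(v)$ is weakly (resp.\ strongly) PE because $\omega$ is. In the strong case, if $\omega$ has PE radius $R$, then $x \mapsto \alpha(x,v)$ depends only on the patch of radius $R + \nr{v}$ around $x$, hence is strongly PE, so $\alpha$ is a strong dynamical $1$-cocycle. In the weak case, $x \mapsto \omega_x(v)$ extends to a continuous function $g_v$ on $\Omega$ with $g_v(T-x) = \omega_x(v)$, so $\alpha(x,v) = \int_0^1 g_v(T - x - tv)\,dt$; if $T - x_n \to T - x$ then $T - x_n - tv \to T - x - tv$ uniformly in $t \in [0,1]$, whence $\alpha(x_n,v) \to \alpha(x,v)$ and $\alpha$ is a weak dynamical $1$-cocycle. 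For passage to cohomology, if $\omega = \omega_0 + d\psi$ with $\psi$ a PE function and $df_0 = \omega_0$, then $f - f_0 - \psi$ is constant, so $\alpha(x,v) - \alpha_0(x,v) = \psi(x+v) - \psi(x) = \delta\psi(x,v)$ is a coboundary; thus $\omega \mapsto [\alpha]$ is a well-defined homomorphism $H^1_{dR,s/w}(\Omega;\R^d) \to H^1_{d,s/w}(\Omega;\R^d)$.

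Finally I would identify this homomorphism as an isomorphism. Consider the restriction of $\alpha$ to the Delone set of $0$-cells of $T$; by Proposition~\ref{allthesamecohomology} this restriction differs from $\omega \mapsto [\alpha]$ only by an isomorphism. On the other hand, the de~Rham-to-cellular isomorphism sends $\omega$ to the cellular cochain $\beta(e) = \int_e \omega$, and the cellular-to-dynamical isomorphism from the proof of Theorem~\ref{thm:allsame} sends $\beta$ to $(x,v) \mapsto \sum_i \beta(e_i)$ along an edge path in $T$ from $x$ to $x+v$. Since $\omega$ is closed and $\R^d$ is simply connected, $\sum_i \int_{e_i} \omega = \int_{[x,x+v]} \omega = \alpha(x,v)$ independently of the chosen path, so the composite of these two isomorphisms is exactly the restricted map; hence $\omega \mapsto [\alpha]$ is an isomorphism. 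The main obstacle is the pattern-equivariance verification in the weak case and the bookkeeping needed to align conventions with Theorem~\ref{thm:allsame}; the rest is formal.
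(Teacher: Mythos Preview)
Your argument is correct and takes a genuinely different route from the paper. Rather than factoring through the cellular picture and Theorem~\ref{thm:allsame}, the paper constructs an explicit inverse at the cochain level: given a dynamical $1$-cocycle $\alpha$, it smooths in the second variable by setting
\[
 \beta(x,v) = \int_{\R^d} \alpha(x,v-s)\,\rho(s)\,ds
\]
for a compactly supported smooth bump $\rho$ of integral~$1$. The difference $v \mapsto \alpha(x,v) - \beta(x,v) = \int \alpha(x+v-s,s)\,\rho(s)\,ds$ is bounded, so Proposition~\ref{prop:bounded-coboundary} gives $[\alpha]=[\beta]$; since $\beta$ is differentiable in $v$, one sets $\phi(v) = \beta(0,v)$ and checks that $d\phi$ is a PE $1$-form, with $[\alpha] \mapsto [d\phi]$ inverse to $\omega \mapsto [\alpha]$. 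Your approach is more structural and leverages machinery already in place, whereas the paper's is self-contained (in particular it does not rely on the de~Rham-to-cellular isomorphism, which the text asserts but does not argue in the weak case) and introduces the convolution-smoothing device that resurfaces, in a related guise, in the proof of Theorem~\ref{lastthm}. Your explicit verification of pattern equivariance, especially in the weak case, supplies details the paper leaves to the reader.
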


\begin{proof}
Since $\alpha(x,v) = \int_x^{x+v} \omega$, $\alpha$ inherits the
pattern-equivariance properties of $\omega$. It is strongly PE (i.e.
determined exactly by the pattern of $T$ in a neighborhood of the
path from $x$ to $x+v$) if 
$\omega$ is, and weakly PE if $\omega$ is, and so defines an element
in $H^1_s$ or $H^1_w$, respectively. What remains is to 
construct an inverse, in cohomology, to the map 
$\omega \mapsto \alpha$.

Let $\rho$ be a compactly supported smooth and non-negative function of 
total integral 1. For any dynamical $1$-cochain $\alpha$, one can build
a cochain
 \[
  \beta(x,v):= \int_{\R^d} \alpha(x,v-s) \rho(s) ds,
 \]
$\beta$ is then a $1$-cochain that is differentiable with respect
 to its second variable.
 It is easily checked that $v \mapsto \alpha(x,v) - \beta(x,v)$ is bounded, and
 therefore $\alpha$ and $\beta$ lie in the same cohomology class by
Proposition \ref{prop:bounded-coboundary}.
Now, let $\phi(v) = \beta(0,v)$.
 Then $d\phi$ is a PE $1$-form, and $\beta \mapsto d\phi$ is an inverse in
 cohomology of the map $\omega \mapsto \alpha$: if one composes these maps
 \[
  \omega {\longmapsto} \alpha \stackrel{* \rho}{\longmapsto} \beta {\longmapsto} d\phi,
 \]
 then $d\phi$ is in the same cohomology class as $\omega$.
\end{proof}

\section{An invariant for homeomorphisms 
of FLC tiling spaces}

In this section we define the cohomological class $[h] \in 
H^1_s(\Omega, \R^d)$ associated to a homeomorphism $h: \Omega \to 
\Omega'$ of minimal, FLC tiling spaces. We begin by defining the
{\em fundamental shape class} of a tiling space. This class is defined
both for FLC tilings and for arbitrary tilings. 

If $T\in \Omega$ is an FLC tiling, then there is a strongly 
PE 1-cochain $\Delta x$
that  assigns to each oriented edge the displacement along that edge. This is 
manifestly closed, and represents a class in $H^1_s(\Omega, \R^d)$.
In terms of 
dynamical cocycles, $\Delta x$ corresponds to the function $\ron F(x,v)=v$.
Note that the cocycle $\ron F(x,v)=v$ is well-defined for 
arbitrary tiling spaces, not just for FLC tilings.

\begin{defin}\label{def:fund-shape}
  If $\Omega$ is an FLC tiling space, the fundamental shape class in 
$H^1_s(\Omega; \R^d)$ is the class represented by the 1-cochain $\Delta x$. 
If $\Omega$ is any tiling space, the fundamental shape class in 
$H^1_w(\Omega; \R^d)$ is the class represented by the cocycle $\ron F$.
\end{defin}
In both the FLC and ILC settings, we denote the fundamental shape
class as $[\ron F]$, or by $[\ron F(\Omega)]$ when we wish to be
explicit about the tiling space in question. It should be clear from
context whether we are referring to an element of $H^1_s$ or $H^1_w$.

Next we specialize to FLC tiling spaces and construct
an invariant in $H^1_s(\Omega; \R^d)$ that classifies homeomorphisms 
$\Omega \to \Omega'$ up to MLD equivalence and \qtran{}.

\begin{defin}\label{def:FLC-invariant}
Let $h: \Omega \to \Omega'$ be a homeomorphism of aperiodic, minimal and 
FLC tiling spaces. Then $[h] = h^* [\ron F(\Omega')] \in H^1_s(\Omega,\R^d)$ 
is called the (strong) {\em cohomology class of $h$}. 
\end{defin}

\begin{rem}
Although $[\ron F(\Omega')]$ is defined via the strongly PE cochain
$\Delta x$, $h^*(\Delta x)$ is not necessarily strongly PE, and so we cannot do
the pullback operation directly in $H^1_{PE,s}$. We must use the isomorphism
between $H^1_{PE,s}$ and $\check H^1$ to represent $[\ron F(\Omega')]$
as a class in $\check H^1(\Omega', \R^d)$, 
do the pullback in $\check H^1$, and then convert back to $H^1_{PE,s}$. 
\end{rem}

There is a setting, however, where the pullback \emph{can} be done directly. 
Suppose that $h$ sends transversals to transversals. That is, if 
$x$ is a vertex of $T$, then $h(T-x)$ has a vertex at the origin. Then
PE 0- and 1-cochains on $\Omega'$ pull back to PE 0- and 1-cochains 
on $\Omega$, and we have $[h] = h^*[\Delta x] = [h^*(\Delta x)]$ 
in $H^1_{PE,s}$.

\begin{ex} \label{example-shape-change} 
Let $\Omega$ be a non-periodic and minimal 
1-dimensional tiling space with two types of 
tiles, $A$ and $B$, of length $L_1$ and $L_2$. (\emph{E.g.}, $\Omega$ might
be a space of Fibonacci tilings, or Thue--Morse tilings.) 
Let $\Omega'$ be a tiling
space with two tiles, $A'$ and $B'$ of length $L_1'$ and $L_2'$, 
that appear in the exact same sequences
as the $A$ and $B$ tiles appear in $\Omega$. Let $h: \Omega \to \Omega'$
be constructed as follows. If $T$ is a tiling in $A$ and $B$ tiles 
where the origin is a fraction
$f$ of the way from the left endpoint of a tile $t$ to the right endpoint, then
$h(T)$ is a tiling in $A'$ and $B'$, with the same sequence, in which the
origin is a fraction $f$ of the way across the tile $t'$ corresponding to $t$.

For each tile type $t$, let $i_t$ be a 1-cochain that evaluates to 1 on each
$t$ tile and to 0 on all other tile types, and let $[i_t]$ be the corresponding
class in $H^1_{PE}$. Under $h^*$, $i_{A'}$ pulls back to
$i_A$ and $i_{B'}$ pulls back to $i_B$. 
On $\Omega'$, we have $\Delta x = L_1' i_{A'} + L_2' i_{B'}$, 
so $[h] = [h^* \Delta x] = L_1' [i_A] + L_2' [i_B]$. The difference
between $[h]$ and $[\ron F(\Omega)] = L_1 [i_A] + L_2 [i_B]$ 
measures the distortion
of lengths that is induced by this homeomorphism. 
\end{ex}

\begin{figure}[htp]
\begin{center}
 \includegraphics[scale=0.8]{./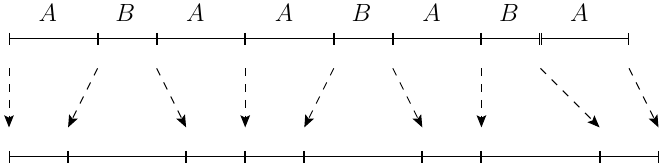}
 \caption{The two tilings have the same combinatorics. Tiles of type $a$ have length $1.5$ on top and $1$ on the bottom; tiles of type $b$ have length $1$ and $2$ respectively on top and bottom.
 The shape-changing deformation induces a piecewise linear map $\mathbb R \rightarrow \mathbb R$, with two different linear parts: either contraction by a factor~$2/3$, or expansion by a factor~$2$.}
\end{center}
\end{figure}

\begin{ex} \label{example-translation}
Let $\Omega$ be any FLC tiling space, and let $s: \Omega \to \R^d$
be a weakly PE function. Suppose further that
$\nr{s(T-x_1)-s(T-x_2)} < \nr{x_1-x_2}$ for all $T \in \Omega$ and $x_{1,2} \in \R^d$.
Define $h: \Omega \to \Omega$ by $h(T) = T - s(T)$.
We call the map map $h$ a \emph{quasi-translation}, as defined before, and the additional assumption
that $s$ is (strictly) $1$-Lipschitz ensures that it is a homeomorphism
from $\Omega$ to itself. It needs not, however, preserve transversals.
Thus $h^*(\Delta x)$ is not well-defined. However, $h^*[\ron F]$ is 
well-defined, and equals $[\Delta x]$. The reason is that $h$ is homotopic to
the identity map, and homotopic maps induce the same pullback map on
$\check H^1(\Omega, \R^d)$. Since the class of the identity homeomorphism
is $[\Delta x]$, so is the class of $h$.
We will denote such a homeomorphism $\tau_s$.
\end{ex}

These two examples may appear special, but in fact {\em any}
homeomorphism can be written as the combination of a \qtran{} and a
map that sends transversals to transversals.

\begin{prop}[\cite{RS09}]\label{prop:approx-RS}
  If $h: \Omega \ra \Omega'$ is a homeomorphism between FLC tiling
  spaces, 
  then there exists $s: \Omega \ra \R^d$, continuous and arbitrarily
  small, such that the map $T \mapsto h(T)-s(T)$ maps FLC transversals of $\Omega$ to
  FLC transversals of $\Omega'$.
\end{prop}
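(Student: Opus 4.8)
\textbf{Proof proposal for Proposition~\ref{prop:approx-RS}.}

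The plan is to exploit the local product structure of both tiling spaces (Propositions~\ref{prop:lamination} and the FLC refinement of~\cite{BBG06}) together with a partition-of-unity argument to manufacture the small correction function $s$. First I would fix an FLC transversal $\Xi$ of $\Omega$, with pointing rule $\ron D$, and observe that $h(\Xi) \subset \Omega'$ is a compact set. Around each tiling $T' = h(T)$ with $T \in \Xi$, the space $\Omega'$ has a chart $B(0,\eps_{T'}) \times \Xi'_{T'} \ra \Omega'$, $(x, S) \mapsto S - x$, where $\Xi'_{T'}$ is an FLC transversal through $T'$. Composing $h$ with the first-coordinate projection of this chart on a small enough neighborhood of $T$ in $\Xi$ gives, locally, a continuous $\R^d$-valued function measuring how far $h(T)$ sits from the nearest point of the transversal $\Xi'_{T'}$ in the $\R^d$-direction. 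The point is that subtracting this local displacement from $h$ would push $h(\Xi)$, locally, onto an FLC transversal of $\Omega'$.

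The key steps, in order, are: (1) use compactness of $h(\Xi)$ to extract a finite subcover of $\Omega'$ by such product charts, say around $h(T_1), \dots, h(T_n)$, with transversals $\Xi'_1, \dots, \Xi'_n$ and radii $\eps_1, \dots, \eps_n$; (2) by taking the charts small and using a Lebesgue-number argument on $\Xi$, arrange that the preimages $U_j = h^{-1}(\text{chart}_j) \cap \Xi$ cover $\Xi$ and carry well-defined local displacement functions $s_j: U_j \ra B(0,\eps_j) \subset \R^d$; (3) pick a continuous partition of unity $\{\chi_j\}$ on $\Xi$ subordinate to $\{U_j\}$ and set $s(T) = \sum_j \chi_j(T) s_j(T)$ for $T \in \Xi$, then extend $s$ to a weakly PE function on all of $\Omega$ using the local product structure of $\Omega$ (constant along the $\R^d$-direction near $\Xi$, or more carefully multiplied by a cutoff in the transverse direction so that it is genuinely continuous and small on $\Omega$); (4) verify that $T \mapsto h(T) - s(T)$ sends $\Xi$ into a transversal of $\Omega'$ — this requires checking that the image is contained in the zero set of a single pointing rule on $\Omega'$, which one builds from the finitely many patches that can appear in $\bigl(h(T) - s(T)\bigr) \cap B(0, r)$, exactly as in the proof of Proposition~\ref{prop:local}; and (5) note that by shrinking all the charts we may take every $\eps_j$, hence $s$, as small as we like.

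The main obstacle I expect is step~(3)–(4): the local displacement functions $s_j$ agree with one another only up to \emph{which} transversal $\Xi'_j$ they target, so a convex combination $\sum_j \chi_j s_j$ does not obviously land $h(\Xi)$ on any \emph{single} transversal of $\Omega'$. The resolution is that, because the $\eps_j$ are small, $h(T) - s(T)$ still lies in the chart around $h(T_j)$ for every $j$ with $\chi_j(T) \neq 0$, and one argues that the patch of $\bigl(h(T) - s(T)\bigr)$ around the origin takes only finitely many values as $T$ ranges over $\Xi$ — by FLC of $\Omega'$ and continuity — so this finite list of patches defines a legitimate pointing rule (a local derivation, using Proposition~\ref{prop:local}) whose associated FLC transversal contains the image. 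In other words, one does not need the $s_j$ to be globally consistent; one only needs the resulting images to have finitely many local configurations, which FLC supplies for free. A secondary technical point is extending $s$ from $\Xi$ to a genuinely continuous (weakly PE) function on $\Omega$ that is still uniformly small; this is handled by the standard trick of damping $s$ to zero as one moves a distance $\sim C$ transverse to $\Xi$ inside the product neighborhood of Proposition~\ref{prop:lamination}, at the cost of enlarging the Lipschitz-type bound only by a controlled factor. Since~\cite{RS09} already carries this out, I would mostly cite it, sketching the product-chart and partition-of-unity skeleton above and deferring the estimates.
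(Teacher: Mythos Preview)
The paper does not give its own proof of this proposition; it simply cites \cite{RS09} and states the result. So there is no ``paper's proof'' to compare against, and your final sentence (defer the details to the citation) is in fact exactly what the authors do.

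That said, your sketch has a genuine gap at the point you yourself flag. Your proposed resolution of the obstacle in steps~(3)--(4) does not work. If $\{\chi_j\}$ is a \emph{continuous} partition of unity taking values strictly between $0$ and $1$, then $s(T)=\sum_j \chi_j(T)s_j(T)$ varies continuously in $\R^d$ as $T$ moves through $\Xi$, and so does the translate $h(T)-s(T)$. Finite local complexity of $\Omega'$ only says there are finitely many patches of a given radius \emph{up to translation}; it says nothing about the absolute patch $\bigl(h(T)-s(T)\bigr)\cap B(0,r)$, which shifts continuously with $s(T)$ and therefore takes uncountably many values. No local pointing rule (which by definition depends on the exact patch at the origin) can pick this set out, so the image is not contained in any FLC transversal.

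The missing idea is that $\Xi$ is totally disconnected (this is the content of the FLC refinement of Proposition~\ref{prop:lamination} you cite). On a zero-dimensional compact space you do not need a partition of unity at all: the finite open cover $\{U_j\}$ of $\Xi$ can be refined to a \emph{clopen partition} $\Xi = C_1 \sqcup \cdots \sqcup C_n$ with $C_j \subset U_j$. Define $s(T)=s_j(T)$ for $T\in C_j$; this is continuous on $\Xi$ because each $C_j$ is clopen, and now $h(T)-s(T)\in\Xi'_j$ exactly, so the image of $\Xi$ lands in the finite union $\bigcup_j \Xi'_j$, which is (or is contained in) an FLC transversal of $\Omega'$. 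The extension of $s$ from $\Xi$ to $\Omega$ then goes through as you describe in your secondary technical point.
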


If we take $h_s(T) = h(T)-s(T)$, then $h$ and $h_s$ are homotopic maps, so
$[h]=h^*[\ron F]= h_s^*[\ron F] = [h_s^*(\Delta x)]$. In practice, one uses the
approximating map $h_s$ to define a pull-back on PE cochains and to 
actually compute $[h]$, but the class $[h]$ is independent of the approximation.

\begin{ex} \label{example-Fibonacci}
The following example comes from a substitution system: see~\cite{MH38}, or~\cite{Fog02} for a modern account.
Let $\Omega$ be a 1-dimensional tiling space coming from
the Fibonacci substitution $A \to AB$, $B \to A$, with tile lengths
$|A|=\phi$ and $|B|=1$, where $\phi= (1+\sqrt{5})/2$ is the golden mean. 
Let $A_n$ and $B_n$ denote the patches (called {\em $n$-th order supertiles})
obtained by applying the substitution $n$ times to $A$ and $B$. (\emph{E.g.},
$A_3 = ABAAB$.)
Let $\Omega'$ be a suspension of the Fibonacci subshift with tile lengths
$|A'|=|B'|= (2+\phi)/(1+\phi)$, and whose supertiles have lengths
$|A'_n|$ and $|B'_n|$. The spaces $\Omega$ and $\Omega'$ are 
known to be topologically conjugate \cite{RS01, CS06} via a conjugacy
$h$ that preserves the sequence of letters, insofar as the differences 
in lengths $|A_n|-|A'_n|$ and $|B_n|-|B'_n|$ go to zero as $n\to\infty$. 

Since $h$ commutes with translation, one might expect $h^*(\Delta x)$ to 
equal $\Delta x$, and hence for $[h]$ to equal $\ron F(\Omega)$. However, 
{\em this is not the case}. The pullback $h^*(\Delta x)$ is not well-defined,
since $h$ does not preserve transversals. To compute $h^*[\ron F(\Omega')]$
we must find an approximation $h_s$ and compute $[h_s^*(\Delta x)]$. 

One such approximation $h_{s_0}$ is given by the procedure of Example
\ref{example-shape-change}. 
Tiles $A$ and $B$ get mapped to tiles $A'$ and 
$B'$, and $h_s^*[\ron F] = |A'| [i_A] + |B'|[i_b]$. Closer approximations
$h_{s_n}$ 
are obtained by applying the procedure of Example \ref{example-shape-change}
to $n$-th order supertiles, and the map $h$ is the limit of $h_{s_n}$ as 
$n \to \infty$. Then
\[
\begin{split}
 [h_{s_n}^* \Delta x] & = |A_n'| [i_{A_n}] + |B_n'| [i_{B_n}] \\
      & = \begin{matrix} \begin{pmatrix} |A'| & |B'|  \end{pmatrix}  \\ \mbox{} \end{matrix}
            \begin{pmatrix} 1&1 \\ 1&0 \end{pmatrix}^n 
            \begin{pmatrix} [i_{A_n}] \\ [i_{B_n}] \end{pmatrix} \\
     & = \begin{matrix} \begin{pmatrix} |A'| & |B'|  \end{pmatrix}  \\ \mbox{} \end{matrix}
           \begin{pmatrix} [i_{A}] \\ [i_{B}] \end{pmatrix} \\
     & = |A'| [i_A] + |B'| [i_B].
\end{split}
\]
Note that the answer does not depend on $n$. All maps $h_{s_n}$ are homotopic
to each other, and to $h$ itself, so all define the same pullback in
cohomology.
\end{ex}

\begin{figure}[htp]
\begin{center}
 \includegraphics[scale=0.8]{./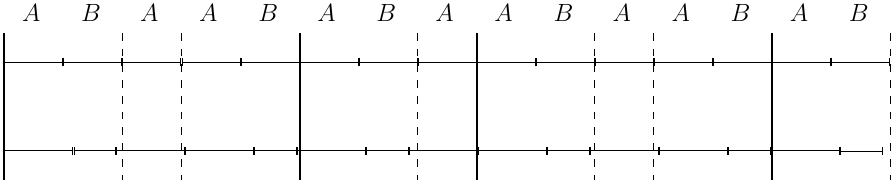}
 \caption{Even though the deformation is not trivial, the global size change of large patches stays bounded:
 the associated tiling spaces are topologically conjugate.}
\end{center}
\end{figure}

\section{A cohomological invariant for orbit equivalence}

We next study orbit equivalence between minimal, aperiodic tiling
spaces that do not necessarily have FLC.
An orbit equivalence is a homeomorphism which
sends $\R^d$-orbits to $\R^d$-orbits. In dynamics, the term ``orbit
equivalence'' is often used in the framework of $\R$-actions. In that
case, it is usually required that the homeomorphism preserve the
orientation of the orbits.  We make no such assumption here.

When a tiling space has FLC, its path components are its translational
orbits, so homeomorphisms necessarily preserve orbits.  However, this
is not true for ILC spaces. For instance, consider a 2-dimensional
tiling space $\Omega$ (such as the space of pinwheel tilings~\cite{Rad94}) 
that admits continuous rotations. Pick a continuous function $f$ on
$\Omega$ that is invariant under rotation about the origin.  Then
consider the map $T \mapsto R_{f(T)} T$, where $R_\theta$ is rotation
about the origin by an angle $\theta$.  If $f(T-x) \ne f(T)$, then it
is likely that $T-x$ and $T$ will be sent to different translational
orbits.

When two spaces are indeed orbit equivalent, there is a cocycle that
describes how an orbit is mapped to another.
\begin{defin}
 Given an orbit equivalence $h: \Omega \ra \Omega'$, the associated
 cocycle is the map
 \[
  (T,x) \mapsto h_T(x),
 \]
 defined by $h(T-x) = h(T) - h_T(x)$.
 It is well-defined, by aperiodicity.
\end{defin}

\begin{prop}\label{prop6-2}
 Given $h: \Omega\ra\Omega'$ as above, for all $T$, the map $h_T$ is
 a homeomorphism; and $(T,x) \mapsto h_T(x)$ is continuous in two
 variables.
 In addition, for all $T$, the map $\alpha(x,v) = h_T(x+v) - h_T(x)$ 
 is a weakly $T$-equivariant $1$-cocycle in the sense of Section~\ref{sec:cohomology}.
\end{prop}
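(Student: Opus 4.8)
The plan is to dispose of the formal parts — well-definedness of $h_T$, its bijectivity, and the cocycle identity — and reduce everything to joint continuity of $(T,x)\mapsto h_T(x)$, which is the only substantive point. Since $\Omega'$ is aperiodic, $u\mapsto h(T)-u$ is injective, so $h(T-x)=h(T)-h_T(x)$ determines $h_T(x)$ uniquely and forces $h_T(0)=0$. As $h^{-1}$ is again an orbit equivalence, it carries a cocycle $(h^{-1})_S$; substituting $S=h(T)$ into its defining identity and applying $h$, and dually substituting into the identity for $h_T$ and applying $h^{-1}$, then invoking aperiodicity of $\Omega'$ resp.\ $\Omega$, yields $h_T\circ(h^{-1})_{h(T)}=\mathrm{id}$ and $(h^{-1})_{h(T)}\circ h_T=\mathrm{id}$; so $h_T$ is a bijection of $\R^d$ with inverse $(h^{-1})_{h(T)}$. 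Expanding $h(T-(x+v))=h((T-x)-v)$ in two ways gives the cocycle identity $h_T(x+v)=h_T(x)+h_{T-x}(v)$, whence $\alpha(x,v)=h_T(x+v)-h_T(x)=h_{T-x}(v)$ and the cocycle condition $\alpha(x,v)+\alpha(x+v,w)=\alpha(x,v+w)$ telescopes. Consequently, once $(T,x)\mapsto h_T(x)$ is shown to be jointly continuous: each $h_T$ is a continuous bijection whose inverse $(h^{-1})_{h(T)}$ is continuous by the same statement applied to $h^{-1}$, hence $h_T$ is a homeomorphism; and $x\mapsto\alpha(x,v)=h_{T-x}(v)$ is a continuous function on $\R^d$ that is weakly $T$-equivariant, since $T-x_n\to T-x$ in $\Omega$ forces $(T-x_n,v)\to(T-x,v)$ in $\Omega\times\R^d$.

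So the task is joint continuity, and its crux is a uniform bound: there is $r_0>0$ with $\|h_T(v)\|\le 1$ whenever $\|v\|\le r_0$, for every $T$. Two auxiliary facts are used. The first is a compactness estimate: for all $B,r>0$, $\nu(B,r):=\inf\{d_{\Omega'}(S,S-u): S\in\Omega',\ r\le\|u\|\le B\}>0$, for a vanishing infimum would, by joint continuity of the action and compactness of $\Omega'$ and of the annulus, produce $S$ and $u\ne0$ with $S=S-u$, against aperiodicity. The second is a path-lifting lemma: if $p\colon[0,1]\to\Omega'$ is continuous with image in a single orbit $\mathcal O_S$, then $q$ defined by $p(t)=S-q(t)$ is continuous; this is local, and in a product chart of Proposition~\ref{prop:lamination} the intersection $\Xi\cap\mathcal O_S$ is countable, hence totally disconnected, so the $\Xi$-coordinate of the connected path $p$ is locally constant and $q$ coincides there with the (continuous) $\R^d$-coordinate of $p$. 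Granting these, put $\nu_0=\nu(1,1)$, choose $\eta>0$ so that $d_\Omega(S,S')<\eta$ implies $d_{\Omega'}(h(S),h(S'))<\nu_0/2$, and $r_0>0$ so that $\|u\|\le r_0$ implies $d_\Omega(S,S-u)<\eta$ for all $S$ (uniform continuity of $h$ and of the action). If some $v$ with $\|v\|\le r_0$ had $\|h_T(v)\|>1$, then the continuous function $r\mapsto\|h_T(rv/\|v\|)\|$ — continuity coming from the path-lifting lemma applied to $r\mapsto h(T-rv/\|v\|)$ — would equal $1$ at some $r_2\in(0,\|v\|)$, producing $u=h_T(r_2v/\|v\|)$ with $\|u\|=1$ yet $d_{\Omega'}(h(T),h(T)-u)=d_{\Omega'}(h(T),h(T-r_2v/\|v\|))<\nu_0/2<\nu_0\le d_{\Omega'}(h(T),h(T)-u)$, a contradiction.

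With the uniform bound, joint continuity near $\Omega\times\{0\}$ is straightforward: for $(T_n,v_n)\to(T,v)$ with $\|v\|<r_0$ we have $h(T_n)\to h(T)$ and $h(T_n)-h_{T_n}(v_n)=h(T_n-v_n)\to h(T-v)$ while $\|h_{T_n}(v_n)\|\le1$, so any subsequential limit $z$ of $h_{T_n}(v_n)$ satisfies $h(T)-z=h(T-v)=h(T)-h_T(v)$ and hence $z=h_T(v)$ by aperiodicity; thus $h_{T_n}(v_n)\to h_T(v)$. To cover all of $\Omega\times\R^d$, iterate the cocycle identity: writing $x$ as a sum of $N=\lceil\|x\|/r_0\rceil$ copies of $x/N$ (with $N$ locally constant in $x$) gives $h_T(x)=\sum_{j=1}^{N}h_{T-\frac{j-1}{N}x}\bigl(\tfrac{x}{N}\bigr)$, a finite sum of maps continuous near $(T,x)$ by the previous case, hence continuous there. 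This proves joint continuity, and by the first paragraph all three assertions follow.

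The step I expect to be the main obstacle is exactly this uniform displacement bound, and underneath it the path-lifting lemma. Translation orbits of a tiling space are not embedded submanifolds, so a priori there is nothing to prevent $h_T(v_n)\to\infty$ while $v_n\to0$ and $h(T-v_n)$ returns close to $h(T)$; what excludes this is the chain ``local product structure $\Rightarrow$ each orbit meets a transversal in a countable set $\Rightarrow$ a continuous orbit-path cannot leave one plaque'', used together with the reverse-modulus compactness estimate. Everything else is formal manipulation of the cocycle identity and aperiodicity.
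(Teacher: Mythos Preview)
Your proof is correct and runs parallel to the paper's, with a cleaner organization and slightly different abstractions. Both arguments reduce everything to joint continuity of $(T,x)\mapsto h_T(x)$ and both hinge on a path-lifting step: the paper phrases this via plaque paths from foliation theory (citing Moore--Schochet), while you isolate it as an explicit lemma using that an orbit meets a transversal in a countable, hence totally disconnected, set. For the ``gap'' step that prevents $h_T(v)$ from escaping while $v$ stays small, the paper invokes the Delone separation constant $r$ directly (two $\eps$-close Delone sets cannot remain $\eps$-close under translation by a vector of length strictly between $\eps$ and $r$), whereas you package the same phenomenon as the aperiodicity estimate $\nu(B,r)>0$; these are equivalent here, but your formulation uses only aperiodicity and the local product structure rather than the Delone metric. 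The main structural difference is that you first establish a uniform bound $\|h_T(v)\|\le1$ for $\|v\|\le r_0$, deduce joint continuity on $\Omega\times B(0,r_0)$, and then bootstrap globally via the cocycle identity, while the paper proves continuity of each $h_T$ first and then attacks joint continuity in one shot by an intermediate-value argument along the segment $t\mapsto tx$. Your route is more self-contained and yields the uniform displacement bound as a reusable lemma; the paper's is terser but leans more heavily on the concrete Delone structure.
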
 

A version of this result
exists in the literature when $h: \Omega \ra \Omega$ is a map which sends each
$\R^d$-orbit to itself (see~\cite{Kwa10}, main theorem and discussion in
the last paragraph).
We include a sketch of a proof for the sake of completeness.

\begin{proof}
 Checking the algebraic cocycle condition on $\alpha$ is straightforward.
 The non-obvious part is whether $(T,x) \mapsto h_T(x)$ is continuous.

 We know that $h$ maps orbits to orbits. The problem is that the topology
 induced by $\Omega$ on orbits (simply by restriction) is not the topology
 of $\R^d$.
 However, a standard argument of foliation theory (see for example~\cite{MS06})
 guarantees
 that the orbits (or ``leaves'') can be given the topology
 of $\R^d$ without invoking the group action, but using the topology
 of $\Omega$ and its local product structure (see Prop.~\ref{prop:lamination}).
 
 Given $T \in \Omega$ and $x \in \R^d$, consider $\gamma$ a path
 from $T$ to $T-x$.
 One can choose a ``plaque path'' from $T$ to $T-x$: a covering of the path
 $\gamma$ by elements of the form $P_i := \phi_i(\{T_i\} \times B(0,r))$ in local
 charts. (The sets $P_i$ are called plaques.)
 Each $P_i$ is given the topology of $\R^n$, which also corresponds to
 the induced topology by $\Omega$.
 Then, by uniform continuity of $h$, and up to reducing the size of the
 plaques involved, each $h(P_i)$ can be included in
 an open chart domain $V_i$ of $\Omega'$.
 Therefore there is a plaque path in $\Omega'$:
 $h(T) \in P'_1, \ldots, P'_k \ni h(T-x) = h(T) - h_T(x)$
 with $h(P_i) \subset P'_i$.
 There is now a plaque path covering $h_* \gamma$.
 When $t$ tends to
 $1$, $h_* \gamma(t)$ is in $P_k$, which is homeomorphic to an open set
 of $\R^d$, so $h_T (\gamma(t))$ tends to $h_T(x)$.
 Therefore, $h_T$ is continuous at $x$.
 
 To prove joint continuity in $(T,x)$, fix $T_0, x_0$ and $\eps >0$.
 Let $\delta$ be the constant given by uniform continuity of $h$,
 corresponding with $\eps$.
 If $T,x$ are $\delta_0$-close from $T_0, x_0$ respectively (for
 an appropriate $\delta_0$), then
 $T-tx$ and $T_0 - tx_0$ are $\delta$-close from each other for all
 $t \in [0,1]$.
 It means that for all $t$, $h(T-tx)$ and $h(T_0 - tx_0)$ are
 $\eps$-close.
 Now, assume that $h_T(x)$ and $h_{T_0}(x_0)$ are not $\eps$-close.
 Because $h(T)-h_T(x)$ and a patch of $h(T_0)-h_{T_0}(x_0)$
 are $\eps$-close, the underlying Delone sets need
 to match up to a displacement of $\eps$.
 By Delone property, it means that if $h_T(x)$ and $h_{T_0}(x_0)$
 are not within $\eps$, they must be at least at distance $r$ from
 each other ($r$ is the uniform separation constant of the
 Delone set).
 By continuity of $t\mapsto h_{T_i}(tx)$, it means that there is
 a $t_0$ such that $\nr{h_T(tx) - h_{T_0}(t x_0)}$ is between,
 for example, $2\eps$ and $r/2$ (provided $\eps$ is small enough).
 This is a contradiction.
\end{proof}

\begin{defin}
\label{def:ILC-class}
Given $h: \Omega \ra \Omega'$ an orbit equivalence between aperiodic tiling spaces with or without FLC, we define a map $\alpha: \Omega \times \R^d \ra \R^d$ by $\alpha(T,v) = h_T(v)$. For any given $T$, it also defines a map (still called $\alpha$) from $\R^d \times \R^d$ to $\R^d$ by $\alpha(x,v) = h_{T-x}(v)$, which for any fixed $v$ is weakly PE in the variable $x$, with respect to the tiling $T$.

Then the class of $h$ in the weak group $H^1_w(\Omega;\R^d)$ is $[h] := [\alpha]$.
\end{defin}

This definition is consistent with the one in the FLC case. First,
note that if we write $\ron F (x,v) = v$ (the fundamental shape class
in the dynamical picture for cohomology), then a simple computation
shows that $h^*([\ron F]) = [\alpha]$ in weak cohomology.  If the tiling
space has FLC, then by Proposition~\ref{prop:approx-RS} the map $h$ is
homotopic to a map $\tilde h$ which sends FLC transversals to FLC
transversals.  Then $\tilde h^*$ and $h^*$ induce the same pullback
map $H^1_s (\Omega';\R^d) \ra H^1_s(\Omega;\R^d)$.  Furthermore, the
pullback of a \emph{strong} cochain by $\tilde h$ is then a strong
cochain, and the strong class of $h$ is then $h^* [\ron F] = \tilde
h^* [\ron F]$ in the \v Cech picture, or $(T,v) \mapsto \tilde h_T(v)$
in the dynamical picture.



If the spaces are FLC, and if $h_s$ preserves transversals (as
in Proposition \ref{prop:approx-RS}), then $\alpha_s$ defines a class 
in the {\em strong} cohomology. Different choices of $s$ yield homotopic maps,
and so define the same class. Thus, whenever the two spaces are FLC, we can
define a strong cohomology class by perturbing $h$ to a map $h_s$ that preserves
transversals and defining $[h]$ to be $[h_s]$. This is exactly the procedure
that we followed (albeit in the PE setting) in the last section.

The following result was proved in~\cite{Jul17} in the FLC case. We give here
a proof that is valid in both cases of finite and infinite local complexity.
\begin{prop}\label{prop:almost-lipschitz}
 Let $h: \Omega \ra \Omega'$ be an orbit equivalence between two (not necessarily FLC) tiling spaces.
 Then there exists $\lambda > 1$ and $C > 0$ such that for all $T \in \Omega$ and all $x \in \R^d$,
 \[
  \lambda^{-1} \nr{x} - C \leq \nr{h_T(x)} \leq \lambda \nr{x} + C.
 \]
\end{prop}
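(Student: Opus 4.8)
The plan is to exploit the fact that $h$ is a homeomorphism, hence uniformly continuous in both directions, and to translate uniform continuity into a quasi-isometry estimate on the orbit maps $h_T$. First I would fix a Delone-type structure on orbits: choose transversals $\Xi$ for $\Omega$ and $\Xi'$ for $\Omega'$ as in Proposition~\ref{prop:lamination}, with associated separation/covering constants, and observe that the orbit of any $T$ carries a ``return'' Delone set of punctures with uniform constants. The key mechanism is this: if $\nr{x}$ is large, then the segment from $T$ to $T-x$ passes through many plaques, and uniform continuity of $h$ forces $h(T-x)$ to be a bounded-step ``walk'' away from $h(T)$ along the orbit of $h(T)$; conversely, uniform continuity of $h^{-1}$ prevents $h(T-x)$ from being too far.

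The main steps, in order: \textbf{(1)} From uniform continuity of $h$, extract $\delta>0$ such that $\nr{y}\le\delta \Rightarrow$ $h(S-y)$ and $h(S)$ are $\eps_0$-close for all $S$, where $\eps_0$ is smaller than half the separation constant $r'$ of $\Xi'$; conclude that $\nr{y}\le\delta \Rightarrow \nr{h_S(y)} \le C_0$ for a uniform $C_0$ (the two punctured orbits must nearly coincide near the origin, so the displacement is controlled). \textbf{(2)} Decompose an arbitrary $x$ into $\lceil \nr{x}/\delta\rceil$ sub-steps each of length $\le\delta$, apply the cocycle identity $h_T(x) = \sum_i \big(h_{T_i}(x_i)\big)$ with $T_i$ the appropriate translates, and use step (1) on each term together with the triangle inequality to get $\nr{h_T(x)} \le (C_0/\delta)\nr{x} + C_0$; absorb into $\lambda\nr{x}+C$. \textbf{(3)} Run the symmetric argument for $h^{-1}$: it is also an orbit equivalence with cocycle $(h^{-1})_{h(T)} = (h_T)^{-1}$, so by the same reasoning $\nr{(h_T)^{-1}(w)} \le \lambda\nr{w}+C$; substituting $w = h_T(x)$ gives $\nr{x} \le \lambda\nr{h_T(x)} + C$, i.e. the lower bound $\lambda^{-1}\nr{x} - C \le \nr{h_T(x)}$ after enlarging $\lambda$ and $C$. \textbf{(4)} Take the larger of the two $\lambda$'s and the two $C$'s to get a single pair of constants working for both inequalities and all $T$, $x$.

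The step I expect to be the genuine obstacle is \textbf{step (1)}: turning ``$h(S-y)$ is $\eps_0$-close to $h(S)$ in the tiling metric'' into ``the displacement $h_S(y)$ is bounded by $C_0$.'' This is exactly where the Delone/transversal structure of the \emph{target} is needed — one argues that since $h(S-y)$ agrees with $h(S)$ on a large ball up to a translation of size $<\eps_0 < r'/2$, and since $h(S-y) = h(S) - h_S(y)$ lies on the orbit of $h(S)$, the puncture of $h(S)$ at (or near) the origin must be matched by a puncture of $h(S-y)$ within $\eps_0$; uniform discreteness then pins down $h_S(y)$ to within the covering radius $R'$ of $\Xi'$. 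The uniformity of $C_0$ over all $S$ is what makes this slightly delicate, but it follows from the uniformity of the Delone constants across the (compact, minimal) space $\Omega'$ and the uniform continuity of $h$ on the compact space $\Omega$. Steps (2)--(4) are then routine cocycle bookkeeping.
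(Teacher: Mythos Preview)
Your proposal is correct and the overall architecture --- bound the cocycle on short steps, then telescope using the cocycle identity, then run the same argument for $h^{-1}$ --- is exactly the paper's. The difference is that you work much harder than necessary in step~(1). The paper has already established, in the proposition immediately preceding this one, that $(T,x)\mapsto h_T(x)$ is jointly continuous on $\Omega\times\R^d$. Given that, boundedness on $\Omega\times\bar B(0,1)$ is a one-line consequence of compactness: the continuous image of a compact set is bounded, so there is $\lambda$ with $\nr{h_T(v)}\le\lambda$ whenever $\nr{v}\le 1$. The paper then writes $v=(n+1)\cdot v/(n+1)$ with $n=\lfloor\nr{v}\rfloor$, telescopes via the cocycle identity into $n+1$ terms each of norm at most $\lambda$, and obtains $\nr{h_T(v)}\le(n+1)\lambda\le\lambda\nr{v}+\lambda$. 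Your step~(3) for the lower bound is identical to the paper's.

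What you identify as ``the genuine obstacle'' --- converting closeness in the tiling metric into a bound on the displacement $h_S(y)$ via the Delone structure of the target --- is precisely the content of the joint-continuity proposition you are allowed to cite, so there is no need to redo it here. Your argument for step~(1) is essentially a sketch of (part of) that earlier proof; it is not wrong, just redundant in context.
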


\begin{proof}
 Consider the cocycle $\alpha: \Omega \times \R^d \ra \R^d$ defined by $\alpha(T,v) = h_T(v)$.
 It is continuous, and so the image of the compact set $\Omega \times \bar B (0,1)$ is compact, hence bounded
 by, say, $\lambda$.
 Let now $v \in \R^d$. We write trivially $v = (n+1) v/(n+1)$, where $n$ is the integer part of $\nr{v}$.
 Then for all $T$ (using the cocycle property),
 \[
 \begin{split}
  \nr{\alpha(T,v)} & = \nr{\sum_{i=0}^n \alpha\big( T-i \frac{v}{n+1}, \frac{v}{n+1} \big) } \\
     & \leq \sum_{i=0}^n \nr{\alpha\big( T-i \frac{v}{n+1}, \frac{v}{n+1} \big)} \\
     & \leq (n+1) \lambda = \lambda \nr{v} + \lambda.
 \end{split}
 \]
 The upper bound of the theorem holds true. For the lower bound, repeat the argument,
 (replacing $h$ with $h^{-1}$ and noticing that $(h_T)^{-1} = (h^{-1})_{h(T)}$).
\end{proof}

We now state and prove the main theorem of this section.
\begin{thm}\label{Classification-Thm}
 Let $h_i: \Omega \ra \Omega_i$ be two orbit-equivalences ($i \in \{1,2\}$).
 \begin{enumerate}
 \item If $[h_1] = [h_2]$ as elements of the weak cohomology, then
   there exists a continuous $s: \Omega \ra \R^d$ such that $\tau_s: T
   \mapsto T-s(T)$ is a homeomorphism, and there exists a topological
   conjugacy $\phi: \Omega_1 \ra \Omega_2$ such that $h_2 \circ \tau_s
   = \phi \circ h_1$.
  \[
   \begin{CD}
    \Omega   @>{h_1}>> \Omega_1  \\
     @V{\tau_s}VV           @VV{\phi}V \\
    \Omega   @>{h_2}>> \Omega_2
   \end{CD}
   \]
 \item If the tiling spaces are FLC and $[h_1] = [h_2]$ in the strong
cohomology, then the same statement holds, only with $\phi$ an MLD map.
 \end{enumerate}
\end{thm}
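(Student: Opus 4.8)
The plan is to exploit the hypothesis $[h_1]=[h_2]$ by first normalizing: replace $h_2$ by $h_2\circ\tau_s$ for a suitable continuous $s:\Omega\to\R^d$ so that the two associated dynamical cocycles become \emph{equal}, not merely cohomologous. Writing $\alpha_i(T,x)=(h_i)_T(x)$ for the cocycles of $h_1$ and $h_2$, the assumption $[h_1]=[h_2]$ means $\alpha_2-\alpha_1=\delta s$ for some weakly PE function $s:\Omega\to\R^d$ (in the FLC strong case, $s$ is transversally locally constant). By the coboundary computation displayed just before Proposition~\ref{prop:almost-lipschitz}, the map $\tau_s:T\mapsto T-s(T)$ (which is a homeomorphism provided $s$ is chosen with small enough Lipschitz constant relative to the flow, which one may arrange since we can rescale the cochain representative — or argue directly that the relevant $\tau_s$ is invertible) transforms $\alpha_2$ into $(h_2)_{\tau_s(T)}$-cocycle that agrees with $\alpha_1$. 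So, after this reduction, I may assume $h_1,h_2:\Omega\to\Omega'$ ($\Omega_1=\Omega_2$ notationally aside — really $h_1:\Omega\to\Omega_1$, $h_2:\Omega\to\Omega_2$) have the \emph{same} cocycle: $(h_1)_T=(h_2)_T$ for all $T$.

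Next, define the candidate conjugacy $\phi:\Omega_1\to\Omega_2$ by $\phi(h_1(T))=h_2(T)$; equivalently $\phi=h_2\circ h_1^{-1}$. The point of the cocycle-matching step is precisely that $\phi$ now commutes with translations: for $S=h_1(T)\in\Omega_1$ and $x\in\R^d$, one computes $\phi(S-x)=h_2(h_1^{-1}(S-x))$; since $h_1$ is an orbit equivalence, $h_1^{-1}(S-x)=T-y$ where $y=(h_1)_T^{-1}(x)$, and then $h_2(T-y)=h_2(T)-(h_2)_T(y)=\phi(S)-(h_1)_T(y)=\phi(S)-x$ because $(h_1)_T=(h_2)_T$. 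Thus $\phi(S-x)=\phi(S)-x$, so $\phi$ is a factor map; being a composition of homeomorphisms, it is a topological conjugacy. The commuting square $h_2\circ\tau_s=\phi\circ h_1$ then holds by construction.

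For part~(2), the FLC case, the extra input is that when $[h_1]=[h_2]$ holds in \emph{strong} cohomology, the function $s$ above can be taken transversally locally constant, so $\tau_s$ is an MLD map; and one must upgrade the conclusion ``$\phi$ is a topological conjugacy'' to ``$\phi$ is an MLD map.'' This is where Proposition~\ref{prop:local} enters: it suffices to show $\phi$ and $\phi^{-1}$ send FLC transversals into FLC transversals. Here I would invoke Proposition~\ref{prop:approx-RS} as already used in Section~5: after a further small transversal-preserving perturbation one may assume $h_1$ and $h_2$ each carry a chosen FLC transversal of $\Omega$ to an FLC transversal; combined with the fact that their cocycles agree and are now transversally locally constant, the induced map $\phi$ on transversals is locally constant, hence a local derivation, and symmetrically for $\phi^{-1}$. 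The main obstacle I anticipate is precisely the bookkeeping in this last step: ensuring that the perturbation $s$ needed to equalize the cocycles and the perturbation needed (via Proposition~\ref{prop:approx-RS}) to make the maps transversal-preserving can be carried out simultaneously and compatibly, and that after all perturbations $\tau_s$ is genuinely a homeomorphism — this requires controlling the Lipschitz-type estimate (as in Example~\ref{example-translation} and Proposition~\ref{prop:almost-lipschitz}) so that $T\mapsto T-s(T)$ does not collapse orbits. Everything else is a routine diagram chase once the cocycles are matched.
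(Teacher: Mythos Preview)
Your overall strategy for part~(1) --- match the cocycles, then take $\phi=h_2\circ h_1^{-1}$ --- is the same as the paper's, but there is a genuine error in the execution. The coboundary computation you invoke (the one just before Proposition~\ref{prop:almost-lipschitz}) concerns \emph{post}-translation $h_s(T)=h(T)-s(T)$, not pre-composition $h\circ\tau_s$. These are different operations. If you compute the cocycle of $h_2\circ\tau_s$ with $\tau_s(T)=T-s(T)$, you get
\[
(h_2\circ\tau_s)_T(x)=(h_2)_T\bigl(x+s(T-x)\bigr)-(h_2)_T\bigl(s(T)\bigr),
\]
which is \emph{not} $\alpha_2(T,x)+\delta s(T,x)$ unless $(h_2)_T$ is linear. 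So taking $s=s_0$ does not equalize the cocycles, and this is also why your worry about rescaling $s_0$ to make $\tau_s$ a homeomorphism is off track: $s_0$ is determined (up to a constant) by $\alpha_2-\alpha_1=\delta s_0$ and cannot be rescaled. The paper avoids this entirely by reversing the order: it first defines $\phi(T')=h_2\circ h_1^{-1}(T')+s_0\circ h_1^{-1}(T')$ directly, checks $\phi(T'-w)=\phi(T')-w$ by a straight computation, and only \emph{then} sets $\tau_s:=h_2^{-1}\circ\phi\circ h_1$, which is automatically a homeomorphism as a composition of homeomorphisms; unwinding gives $s(T)=(h_{2,T})^{-1}(-s_0(T))$, not $s_0(T)$.

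For part~(2), your claim that $\tau_s$ is MLD is neither needed nor established (and is unlikely, since $s(T)=(h_{2,T})^{-1}(-s_0(T))$ has no reason to be transversally locally constant even when $s_0$ is). The theorem only asserts that $\phi$ is MLD. The paper does not try to perturb $h_1$ and $h_2$ simultaneously; instead it takes the conjugacy $\phi$ already built, uses Proposition~\ref{prop:approx-RS} to produce a single local map $\psi$ homotopic to $h_2\circ h_1^{-1}$ (hence to $\phi$), writes the difference between $\psi$ and $\phi$ as a strongly PE coboundary, and observes that on each clopen piece of a transversal where that coboundary is constant, $\phi$ differs from the local map $\psi$ by a fixed translation --- so $\phi$ sends FLC transversals to FLC transversals and is MLD by Proposition~\ref{prop:local}. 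Your ``simultaneous perturbation'' bookkeeping is unnecessary once you argue this way.
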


\begin{proof}
 We begin with the first statement.  A simple computation shows that
 if $h$ is an orbit equivalence between tiling spaces, then
 $(h^{-1})_{h(T)} = (h_T)^{-1}$. 
 Let $h_1$ and $h_2$ be two
 homeomorphisms with $[h_1]=[h_2]$ in the weak group.
 By definition of the weak class associated with a homeomorphism (Definition~\ref{def:ILC-class}), 
 the cocycles associated with $h_1$ and $h_2$ differ by a co-boundary, \ie
 for all~$T$ and~$x$,
 \begin{equation}\label{eq:weak-coboundary}
  h_{2,T}(x) = h_{1,T}(x) + \bigl( s_0 (T - x) - s_0(T) \bigr).
 \end{equation}
 Now, for $T' \in \Omega_1$ and $w \in \R^d$, let us compute $h_2 \circ h_1^{-1}(T'-w)$.
 Let $T = h_1^{-1}(T')$.
 \begin{align*}
  h_2 \circ h_1^{-1}(T' \!-\! w) & = h_2 \big( T \!-\! h_{1,T}^{-1} (w) \big)  \\
    & = h_2(T) \!-\! h_{2,T} \circ h_{1,T}^{-1} (w)  \\
    & = h_2(T) \!-\! h_{1,T} \circ h_{1,T}^{-1} (w) \!-\! \Big( s_0 \big( T\!-\!h_{1,T}^{-1} (w) \big) \!-\! s_0 (T) \Big)  \\
    & = h_2 \circ h_1^{-1} (T') \!-\! w \!-\! s_0 \big( h_1^{-1}(T') \!-\! (h_{1}^{-1})_{T'}(w) \big) \!+\! s_0 \big( h_1^{-1}(T') \big)  \\
    & = h_2 \circ h_1^{-1} (T') \!-\! w \!-\! s_0 \circ h_1^{-1} (T' \!-\! w) \!+\! s_0 \circ h_1^{-1} (T').
 \end{align*}
 Now, given $T' \in \Omega'$, it is possible to define
 \[
  \phi (T') = h_2 \circ h_1^{-1} (T') + s_0 \circ h_1^{-1} (T').
 \]
 The computation above shows that $\phi(T'-w) = \phi(T') - w$ and in
 particular it is a bijection on each orbit. 
 Since $h_1$ and $h_2$
 are bijective, $\phi$ induces a bijection between the set of orbits
 $\Omega / \R^d \ra \Omega' / \R^d$.  Therefore, $\phi$ is
 bijective. It is continuous and is therefore (by compactness) a
 homeomorphism.  To conclude, $\phi$ is a topological conjugacy.
 A simple computation shows that $h_2^{-1} \circ \phi \circ h_1 = \tau_s$
 with
 \[
  \tau_s (T) = T - (h_{2,T})^{-1}(-s_0(T)),
 \]
 and it is in particular a homeomorphism.
 
 We now turn to the second statement, for which we assume FLC. 
 By functoriality of \v Cech cohomology, we have
 \[
  (h_2 \circ h_1^{-1}) [\ron F (\Omega_2)]_s = [\ron F (\Omega_1)]_s,
 \]
 or in other words $[h_2 \circ h_1^{-1}]_s = [\ron F]_s$.
 Using the computation from the first part of this theorem, we can
 choose an arbitrary $T_0 \in \Omega_1$, define
 \[
  \phi (T_0-x) = h_2 \circ h_1^{-1} (T_0) - x,
 \]
 and extend it to a topological conjugacy $\Omega_1 \ra \Omega_2$.
 By construction (see above), $\phi$ and $h_2 \circ h_1^{-1}$ are
 homotopic, so $[\phi]_s = [h_2 \circ h_1^{-1}]_s$.
 We now want to prove that $\phi$ is an MLD map, or in other words
 that it sends FLC transversals to FLC transversals (Proposition~\ref{prop:local}).
 Let $\Xi$ be a FLC transversal containing $T_0$.
 
 Using Proposition~\ref{prop:approx-RS}, 
 there exists a local map  $\psi: \Omega_1 \ra \Omega_2$
 that is $\eps$-close to $h_2 \circ h_1^{-1}$, and so homotopic to $\phi$. 
Without loss of generality (up to composing with a small translation),
we can also assume that $\psi(T_0)=\phi(T_0)$. 
Since $\psi$ is homotopic to $\phi$, its class in cohomology is $[\psi]_s = [\ron F]_s$.
Besides, $\psi$ is a map which preserves FLC transversals and therefore its
associated cochains differs from $\ron F$ by a strong coboundary.
This means there exists a function $s$ on $\Omega$, which is transversally
 locally constant (in particular its restriction to $\Xi$ is
 locally constant), such that
 \[
  \psi(T_0-x) = \psi(T_0) - x + s(T_0-x) - s(T_0).
 \]
 In particular,
 \[
  \psi(T_0-x) = \phi(T_0-x) + s(T_0-x) - s(T_0).
 \]
 Let $\Xi_0$ be a clopen set of $\Xi$ on which $s$ is constant.
 It means that for any $x$ satisfying $T_0-x \in \Xi_0$,
 \(
  \phi(T_0 - x) = \psi(T_0 - x) - s_0
 \)
 for some constant $s_0 \in \R^d$. We deduce:
 \[
  \phi(\Xi_0) = \psi(\Xi_0) - s_0.
 \]
 Since $\psi$ is local, and the translate of a FLC transversal is a
 FLC transversal, $\phi$ is local. It is therefore a MLD map.
\end{proof}

\section{The Ruelle--Sullivan map}
In the next two sections, we tackle the inverse problem: ``What
cohomology classes can one get from a homeomorphism?'' We will show
that these are precisely the classes that are mapped to invertible
matrices by the Ruelle--Sullivan map.

For the moment, we assume that $\Omega$ has FLC and is uniquely ergodic
with invariant probability measure $\mu$.  In this setting, the
notations $[h]$ or $[\alpha]$ will refer to the classes in the
\emph{strong} cohomology group associated to a homeomorphism $h$ or a
cocycle $\alpha$.

The Ruelle--Sullivan map for FLC tiling spaces is most easily defined
(and was first defined) using the PE de~Rham picture for cohomology.
See \cite{KP06} for details. Each class $[\alpha] \in H^1(\Omega;
\R^d)$ can be represented by an $\R^d$-valued PE 1-form, \emph{i.e.},
the assignment of a linear transformation $\alpha(T): \R^d \to \R^d$
at each point $T \in \Omega$.  The Ruelle--Sullivan map averages this
linear transformation (\emph{i.e.}, $d \times d$ matrix) over $\Omega$
with respect to the measure $\mu$.
 \[
  C_\mu ([\alpha])  = \int_\Omega \alpha(T) d\mu(T).
 \]
 By the ergodic theorem, this is the same as averaging $\alpha$ over a
 single orbit, \emph{i.e.},  over the points of a single tiling $T$, where
as usual we are identifying $x \in T$ with $T-x \in \Omega$.  More
 precisely, we can average over cubes $C_r$ of side $r$ centered at
 the origin, and take a limit as $r \to \infty$. 
Adding the differential $d\gamma$ of a bounded function $\gamma$ can only change
 the average over $C_r$ by $O(1/r)$, and so does not change the
 limiting value (see~\cite{KP06}), since 
 \begin{displaymath}
 \begin{split}
\int_{C_r} d \gamma(e_1) dx_1\cdots dx_d = & 
\int_{C_r}\frac{\partial \gamma}{\partial x_1} dx_1 \cdots dx_d \\
  = & \int_{\left [-\frac{r}2,\frac{r}2 \right ]^{d-1}} \gamma(\frac{r}{2}, x_2, \ldots x_d)-\gamma(-\frac{r}{2},x_2,\ldots, x_d) 
d^{d-1}x
\end{split}
\end{displaymath}
scales as $r^{d-1}$, as does $\int_{C_r} d\gamma(e_j)$ for all $j=1,2,\ldots, d$.  
 Thus $C_\mu([\alpha])$ is
 only a function of the cohomology class $[\alpha]$, and not of the
 specific form $\alpha$ used to represent it.

This is how the Ruelle--Sullivan class was developed for FLC tilings in
\cite{KP06}, but the construction never actually uses the
FLC condition!  A nearly
identical construction applies for weak cohomology classes on ILC
tiling spaces. The point in both settings is that any (weakly or strongly) PE
function or 1-form must be continuous on the compact space $\Omega$,
and so must be bounded. The average of $\alpha$ over $\Omega$ is always
well-defined and by the ergodic theorem can be computed by averaging
$\alpha$ over large cubes in a specific tiling $T$, and adding $d \gamma$
to $\alpha$ can only change the average over $C_r$ by $O(1/r)$.
 
Note that the map can also be defined on dynamical cocycles.  We
mentioned (Proposition~4.9) that any de Rham PE closed $1$-form can be
mapped to a PE dynamical $1$-cocycle as follows: if $\alpha$ is a
$1$-form, then the path integral $\beta(x,v) = \int_{x}^{x+v} \alpha$
is a dynamical $1$-cocycle representing the same class.  Then it is a
routine differential calculus computation to show that $C_\mu[\alpha]
\cdot v$ can be given by the average
 \[
  \lim_{r \ra \infty} r^{-d} \int_{C_r} \beta(s,v) ds.
 \]

We write $\alpha(x,v)$ as shorthand for $\alpha(T-x,v)$.  
By the ergodic theorem for uniquely ergodic measures,
the convergence of $r^{-d} \int_{C_r} \alpha(s,v) ds$ to $C_\mu([\alpha])(v)$
is uniform. That is, for any $\eps>0$ there is a radius $r_\eps$ such
that, for any tiling $T$ and any centering point $x$ and any vector $v$,
and for any $r>r_\eps$,  
\begin{equation} \label{ergodic-estimate}
  \nr{ r^{-d} \int_{C_r} \alpha(x+s,v) ds - C_\mu([\alpha]) (v)} \leq 
\eps \max\{\nr{v}, 1\}.
\end{equation}
Note that this result is uniform in $v$ as well as in $x$ and $T$. Uniformity
for $v$ with $\| v \| \le 2$ follows from compactness of the ball of 
radius 2.
Uniformity for larger vectors follows from the cocycle condition. If 
$\| v \| > 2$, pick an integer $n$ such that $1 < \| v/n \| < 2$. 
We can then write 
$$ \alpha(x+s,v) = 
     \sum_{i=1}^n \alpha\Bigl( x+s + \frac{i-1}{n}v, \frac{v}{n} \Bigr),$$
and apply equation (\ref{ergodic-estimate}) to each term on the right hand
side.

\begin{thm}\label{RS-invertible}
 Let $h: \Omega \ra \Omega'$ be an orbit-equivalence between two minimal, aperiodic  tiling spaces
 (with or without finite local complexity), with $\Omega$ uniquely ergodic. 
Then $C_\mu [h]$ is invertible.
\end{thm}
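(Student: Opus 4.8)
The plan is to show that the $d\times d$ matrix $M := C_\mu[h]$ has trivial kernel; since it is a square matrix, this gives invertibility. Write $\alpha(x,v)=h_T(x+v)-h_T(x)$ for the cocycle associated to $h$, so that $C_\mu([\alpha])=M$; recall the cocycle identity $h_T(x+y)=h_T(x)+h_{T-x}(y)$ (immediate from $h(T-z)=h(T)-h_T(z)$) and that $h_T(0)=0$. Feeding the cocycle identity into Proposition~\ref{prop:almost-lipschitz}, applied to the tiling $T-y$ and the vector $x-y$, shows that each $h_T$ is a quasi-isometry of $\R^d$:
\[
 \lambda^{-1}\nr{x-y}-C \;\le\; \nr{h_T(x)-h_T(y)} \;=\; \nr{h_{T-y}(x-y)} \;\le\; \lambda\nr{x-y}+C .
\]
In particular $h_T$ has uniformly bounded oscillation on any set of bounded diameter, a fact not visible from Proposition~\ref{prop:almost-lipschitz} alone.

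Next I would upgrade the ergodic estimate~\eqref{ergodic-estimate}, which is an ``on average over the orbit'' statement, to a genuine pointwise affine approximation of $h_T$. Fix $\eps>0$, let $r=r_\eps$ be the radius from~\eqref{ergodic-estimate}, and set $H(c):=r^{-d}\int_{C_r}h_T(c+s)\,ds$, the average of $h_T$ over the cube $c+C_r$. Using the cocycle identity one checks $r^{-d}\int_{C_r}\alpha(x+s,v)\,ds=H(x+v)-H(x)$, so~\eqref{ergodic-estimate} says $\nr{H(x+v)-H(x)-Mv}\le\eps\max\{\nr v,1\}$; taking $x=0$ gives $\nr{H(x)-H(0)-Mx}\le\eps\max\{\nr x,1\}$. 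Meanwhile the oscillation bound gives $\nr{H(c)-h_T(c)}\le r^{-d}\int_{C_r}\nr{h_{T-c}(s)}\,ds\le\lambda r\sqrt d/2+C=:C_1$, and likewise $\nr{H(0)}\le C_1$. Combining,
\[
 \nr{h_T(x)-Mx} \;\le\; \nr{h_T(x)-H(x)}+\nr{H(x)-H(0)-Mx}+\nr{H(0)} \;\le\; \eps\max\{\nr x,1\}+2C_1,
\]
with $C_1$ depending only on $\eps$ (and on $\lambda,C,d$), uniformly in $T$ and $x$.

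To finish, suppose for contradiction that $M$ is not invertible, so there is a unit vector $v_0$ with $Mv_0=0$. Plugging $x=Rv_0$ into the affine approximation gives $\nr{h_T(Rv_0)}\le\eps R+2C_1$ for $R\ge1$, while Proposition~\ref{prop:almost-lipschitz} gives $\nr{h_T(Rv_0)}\ge\lambda^{-1}R-C$. Choosing $\eps=1/(2\lambda)$ (which fixes the constant $C_1$) forces $\lambda^{-1}R-C\le R/(2\lambda)+2C_1$, i.e.\ $R\le 2\lambda(C+2C_1)$, which is absurd for $R$ large. Hence $M=C_\mu[h]$ is invertible.

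The step I expect to be the crux is the second one: the ergodic theorem only controls $h_T$ averaged over the orbit, and the reason this is enough is that the cocycle identity converts Proposition~\ref{prop:almost-lipschitz} into a true Lipschitz-type estimate for each $h_T$, so local spatial averages of $h_T$ stay a bounded distance from $h_T$ itself — that is what lets the averaged statement become the pointwise affine bound used in the last step. One also has to be careful about which centering/averaging convention makes $r^{-d}\int_{C_r}\alpha(x+s,v)\,ds$ equal to a difference $H(x+v)-H(x)$, though nothing essential depends on it. Finally, the argument uses only Proposition~\ref{prop:almost-lipschitz}, the cocycle identity, and the uniform ergodic estimate~\eqref{ergodic-estimate}, all of which are available without the FLC hypothesis, so the same proof handles the ILC case verbatim.
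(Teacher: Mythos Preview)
Your proof is correct and uses the same ingredients as the paper (Proposition~\ref{prop:almost-lipschitz}, the cocycle identity, and the uniform ergodic estimate~\eqref{ergodic-estimate}), but the way you combine them is genuinely different.

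The paper never introduces the spatially averaged map $H$. Instead it works directly with the averaged cocycle $r^{-d}\int_{C_r}\alpha(s,v)\,ds$ and shows by a polarization trick that this average has norm at least $\tfrac{1}{2\lambda}\nr{v}$ for large $v$: it expands $\nr{\alpha(x_1,v)-\alpha(x_2,v)}^2$, uses the parallelogram cocycle identity to bound this difference by a constant depending only on $r$, and integrates the resulting lower bound on $\langle \alpha(x_1,v),\alpha(x_2,v)\rangle$ over $x_2\in C_r$. The contradiction then comes from comparing this lower bound with the ergodic upper bound $\eps\nr{v}$ directly at the level of the cube average.

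Your route is more linear: average $h_T$ itself to get $H$, observe that the ergodic estimate becomes the statement that $H$ is $\eps$-affine with linear part $M$, and use the quasi-isometry bound to show $H$ stays within a fixed constant of $h_T$. This yields the pointwise estimate $\nr{h_T(x)-Mx}\le \eps\nr{x}+2C_1$, which is a stronger intermediate conclusion than anything the paper states, and the contradiction with the lower quasi-isometry bound is then immediate. Your argument avoids the inner-product computation entirely and is arguably cleaner; the paper's argument has the minor advantage of never needing to name a primitive of the cocycle.
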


\begin{proof}
  Let $T$ be fixed, and let $\alpha(x,v) := h_{T-x}(v)$ be the
  $T$-equivariant cocycle associated with $h$.  Remember
  (Proposition~\ref{prop:almost-lipschitz}) that $\alpha$ satisfies
 \[
  \lambda^{-1} \nr{v} - C \leq \alpha(x, v) \leq \lambda \nr{v} + C,
 \]
 for some constants $\lambda$ and $C$, uniformly in $x$.
 Let $\eps > 0$ be smaller than $\lambda^{-1}/10$.
 Then, by Equation (\ref{ergodic-estimate})  
there is $r = r_\eps$ such that for all $x$,
 \[
  \nr{ r^{-d} \int_{C_r} \alpha(x + s, v) ds - C_\mu(\alpha)(v) } \leq \eps \nr{v}.
 \]
The cocycle property applied to the parallelogram $[x_1, x_1+v, x_2+v, x_2]$ gives:
 \begin{equation}\label{eq:cocycle-parallelogram}
 \begin{split}
  \nr{\alpha(x_1,v) - \alpha(x_2,v)} & = \nr{\alpha(x_1+v, x_2-x_1) - \alpha(x_1,x_2-x_1)} \\
                                     & \leq 2 \lambda \nr{x_2-x_1} + 2C.
 \end{split}
 \end{equation}
 so that if $x_1, x_2 \in C_r$, the norm $\nr{\alpha(x_1,v) - \alpha(x_2,v)}$ is bounded above by $2 \lambda \sqrt{d} r + 2C$.

We now combine these results. For all $x_1, x_2 \in C_r$ and all
 $v$, we have
 \[
  \nr{\alpha(x_1, v) - \alpha(x_2, v)}^2 = \nr{\alpha(x_1, v)}^2 + \nr{\alpha(x_2, v)}^2 - 2 \langle \alpha(x_1, v), \alpha(x_2, v) \rangle,
 \]
 so
 \[
  2 \langle \alpha(x_1, v), \alpha(x_2, v) \rangle = \nr{\alpha(x_1, v)}^2 + \nr{\alpha(x_2, v)}^2 - \nr{\alpha(x_1, v) - \alpha(x_2, v)}^2. 
 \]
The two positive terms on the right hand side 
are each bounded below by $(\lambda^{-1} \nr{v} -
 C)^2$, and the negative 
term is bounded below by $-2\sqrt{d} \nr{x_2
   - x_1} -2C$. Therefore,
 \[
  \langle \alpha(x_1, v), \alpha(x_2, v) \rangle 
\geq (\lambda^{-1} \nr{v}-C)^2 - c'
 \]
 (where $c'$ depends on $r$). For all $v$ of norm greater than a constant $M$
(which happens to equal $\lambda(4C + 2 \sqrt{C^2+3c'})/3$) 
we can exchange the constant offsets $C$ and $c'$ for a factor of 4:
 \[
  \langle \alpha(x_1, v), \alpha(x_2, v) \rangle \geq \frac{1}{4\lambda^2} \nr{v}^2.
 \]
 In particular, this quantity can be made as large as desired by
 increasing the length of $v$.  We integrate this quantity over $x_2
 \in C_r$, divide by $r^d$, and find (for all $v$ of norm greater than
 $M$):
 \[
  \left\langle \alpha(x_1, v) , r^{-d} \int_{C_r} \alpha(s,v) ds \right\rangle \geq \frac{1}{4\lambda^2} \nr{v}^2.
 \]
 Integrating again and taking the square root, we get for all $v$ of norm larger than $M$:
 \[
  \nr{r^{-d} \int_{C_r} \alpha(s,v) ds} \geq \frac{1}{2\lambda} \nr{v}.
 \]

 We conclude this proof by contradiction: assume $C_\mu[h]$ is
 singular, so that there is $w \in \R^d \setminus \{0\}$ such that
 $C_\mu[h] (w) = 0$.  We assume without loss of generality that
 $\nr{w} \geq M$.  Then the quantity
 \[
  \nr{r^{-d} \int_{C_r} \alpha(s,w) ds}
 \]
 must be both smaller than $\eps \nr{w}$ (with $\eps$ very small
 compared to $\lambda^{-1}$), and greater than $1/(2\lambda) \nr{w}$,
 which is a contradiction.  Therefore $C_\mu[h]$ is non singular.
\end{proof}

\section{Which cohomology classes are achievable?}

In the previous section we showed that the strong
cohomology class of a homeomorphism of uniquely ergodic
FLC tiling spaces, or the weak cohomology
class of an orbit-equivalence of uniquely ergodic general tiling spaces,
had to be mapped to an invertible matrix by the Ruelle--Sullivan map.
In this section we show that 
this is the only constraint on the possible
cohomology classes of homeomorphisms/orbit equivalences, and that
all possible classes can be achieved with shape change transformations.  
Before stating the result, we define shape changes. 

Suppose that $T$ is an FLC tiling in a minimal tiling space $\Omega$, 
to which we associate a Delone set $\Lambda$ 
(say, of vertices of $T$), with the added assumption that $0 \in \Lambda$. 
Suppose that $\alpha$ is a closed $\R^d$-valued
1-form on $\R^d$, and that $\alpha$ is strongly PE with respect to $T$ with
some radius $R$. 
We decorate each point in $\Lambda$ 
by the pattern of $T$ out to some radius $R_0 > R$ around that 
point,\footnote{Strictly
speaking, we need $R_0$ to be greater than $R$ plus the greatest distance
between nearest neighbors in $\Lambda$.} 
so that no information is lost in going from $T$ to $\Lambda$. 
Let $A(x) = \int_0^x \alpha$, and suppose that $A: \R^d \to \R^d$ is a 
homeomorphism.  (Since $\alpha$ is closed, the value of $A(x)$ is 
independent of the path taken from 0 to $x$.) 
We generate a new Delone set $\Lambda'$ by moving each point $p \in \Lambda$ to a new
location $A(p)$ while preserving the labels. 
We can then view $\Lambda'$ as the vertex set of a new tiling $T'$, typically
with the same combinatorics as $T$. The only difference is that
the displacement between adjacent vertices $v_1$ and $v_2$ of $T$ has been
changed from $v_2-v_1$ to $\int_{v_1}^{v_2} \alpha$. Note that the label of 
$v_1$ determines the pattern of $T$ out to distance $R_0$ around $v_1$, and
therefore exactly determines the values of $\alpha$ along a straight path from 
$v_1$ to $v_2$, and hence determines $\int_{v_1}^{v_2} \alpha$. The local patterns of 
$T'$ are thus determined from the local patterns of $T$, and  $T'$ has FLC. 

The assumption that $A$ is a homeomorphism is significant. It is easy to
construct PE forms $\alpha$ for which $A$ is not injective, or is not 
surjective. However, if $\alpha$ is pointwise close to the identity matrix,
as in the proof of Theorem \ref{lastthm}, below,
then $A$ is guaranteed to be a homeomorphism. 

For each $x \in \R^d$, let $h_\alpha(T-x)=T'-A(x)$. Note that if $p
\in \Lambda$, then $A(p)\in \Lambda'$, and $h_\alpha(T-p)=T'-A(p)$ has
a vertex corresponding to $p$ at the origin.  The location of any
other vertex $p'$ is then given by $A(p')-A(p)=\int_p^{p'} \alpha$.
If $T-p_1$ is close to $T-p_2$ in the tiling metric, then $\alpha$
takes on the same values on a neighborhood of $p_1$ as on a
neighborhood of $p_2$, so $T'-A(p_1)$ is close to $T'-A(p_2)$.  That
is, $h_\alpha$ is a continuous map from the orbit of $T$ to the orbit
of $T'$, and so extends to a homeomorphism $\Omega \to \Omega'$, where
$\Omega'$ is the orbit-closure of $T'$. Moreover, the definition of
$h_\alpha$ does not depend on the reference tiling $T$. Any tiling in
the canonical transversal of $\Omega$ would generate the same map.

We call $h_\alpha$ the \emph{shape change transformation} associated
to $\alpha$, or more simply a \emph{shape change}. Note that the
cohomology class $[h_\alpha] \in H^1_s(\Omega, \R^d)$ of this shape
change is represented in the de~Rham version of cohomology by $\alpha$
itself.

\medskip

We can also define shape changes for tiling spaces that are minimal
but are not assumed to have FLC. In that setting we merely assume that
$\alpha$ is weakly PE, and that $A=\int \alpha$ induces a homomorphism
$\R^d \to \R^d$.  In going from $T$ to $\Lambda$, we label each point
$p \in \Lambda$ by $T-p$, i.e. by a point in a transversal of
$\Omega$.  (It is usually possible to pick a much smaller set of
labels. We over-decorate to make sure that the map $\Omega \to
\Omega'$ is a homeomorphism and not a factor map.)  As before, we take
$h_\alpha(T-x)=T'-A(x)$ and extend by continuity to a homeomorphism
$\Omega \to \Omega'$. The cohomology class of $h_\alpha$, this time in
$H^1_w(\Omega,\R^d)$, is represented by the 1-form $\alpha$, exactly
as before.

\begin{thm}\label{lastthm}
  Let $\Omega$ be a minimal, aperiodic, FLC, uniquely ergodic tiling space, 
and let
$\alpha \in H^1_s (\Omega; \R^d)$ be such that $C_\mu (\alpha)$
  is invertible. Then there exists an FLC  tiling space $\Omega'$ and a
strongly PE 1-form $\bar \alpha$ representing $\alpha$ such that the shape 
deformation induced by
  $\bar \alpha$ is a homeomorphism
 \[
  h_{\bar \alpha} : \Omega \lra \Omega',
 \]
and such that $[h_{\bar \alpha}]=\alpha$. 
 
Likewise, let $\tilde \Omega$ be a minimal, aperiodic, 
uniquely ergodic tiling space with no assumption of FLC, 
and let 
$\alpha \in H^1_w (\Omega; \R^d)$ be such that $C_\mu (\alpha)$
  is invertible. Then there exists a tiling space $\tilde \Omega'$ and a
weakly PE 1-form $\bar \alpha$ such that the shape 
deformation induced by
  $\bar \alpha$ is an orbit equivalence
 \[
  h_{\bar \alpha} : \tilde \Omega \lra \tilde \Omega',
 \]
and such that $[h_{\bar \alpha}]=\alpha$. 
\end{thm}

\begin{proof} 
  If $C_\mu(\alpha)$ is an arbitrary invertible matrix $M$, then
  $C_\mu(M^{-1}\alpha)$ is the identity, where $M^{-1}$ acts on the
  second factor of $H^1_{s/w}(\Omega, \R^d) = H^1_{s/w}(\Omega, \R)
  \otimes \R^d$.  A shape change whose cohomology class is
  $M^{-1}\alpha$, followed by a linear transformation by $M$, would
  then be a shape change whose cohomology class is $\alpha$.  It is
  therefore enough to establish the theorem for the special case when
  $C_\mu(\alpha)$ is the identity matrix.

  We work first in the category of FLC tiling spaces.  Pick a
  reference tiling $T$, and let $\alpha_0$ be a strongly PE 1-form
  representing the class $\alpha \in H^1_s(\Omega, \R^d)$.  We would
  like to do a shape change by $\alpha_0$ itself. However, we do not
  know \emph{a priori} that $A_0(x)=\int_0^x \alpha_0$ gives a homeomorphism of
  $\R^d$. Instead, we will construct a strongly PE 1-form $\bar \alpha$,
  cohomologous to $\alpha_0$,
  such that $\bar \alpha$ is pointwise close to the identity matrix.  We
  then do a shape change by $\bar \alpha$.

To construct $\bar \alpha$, we convolve $\alpha_0$ with a bump function. 
For $r > 1$, let $\rho_r$ be a continuous function which satisfies the following
properties.
\begin{itemize}
 \item $\rho_r(x)$ is constant for $x$ in the cube $C_r = [-r/2, r/2]^d$, and achieves its maximum value on $C_r$;
 \item $\rho_r(x)$ is zero outside of the cube $C_r + [-1,1]^d$;
 \item $\rho_r(x) \geq 0$ and $\int_{\R^d} \rho_r = 1$.
\end{itemize}
The area of the annulus on which $\rho_r$ is not constant grows one order
of magnitude slower than the area of $C_r$, therefore it results from
the ergodic theorem that for all $\eps > 0$, there exists $r_0$ such
that for all $r > r_0$ and all $x$,
\begin{equation}\label{eq:approx-Cmu}
 \nr{\int_{\R^d} \rho_r(s) \alpha_0(x+s)  ds - C_\mu(\alpha) } 
\leq \eps \nr{C_\mu(\alpha)},
\end{equation}
where the norm is any operator norm on the space of $d \times d$ matrices.

Let $\eps < 1/4$ and pick $r$ accordingly so that the equation above
holds.
Define
\[
\bar \alpha (x):= \int_{\R^d} \rho_r(s) \alpha_0(x+s)  ds.
\]
This is a continuous, closed $1$-form.
Since $\rho_r$ has compact support, $\bar \alpha$ is
strongly pattern-equivariant.

We must show that $\bar A := \int \bar \alpha$ induces a homeomorphism 
$\R^d \to \R^d$. 
Since $d\bar A(x)=\bar \alpha (x)$ is $\eps$-close to the identity matrix
for all $x$, it is invertible.
Therefore, the inverse function theorem states that $\bar A$ is
a local diffeomorphism.
The map $\bar A$ is also one-to-one: indeed, assume $\bar A(x) 
= \bar A(x+v)$, so that
\[
 \int_x^{x+v} \bar \alpha  = 0.
\]
However, $\nr{\bar \alpha (s)  - I_d} < 1/4$ for all
$s$. Integrating, we get
\[
 \nr{\Bigl( \int_x^{x+v} \bar \alpha \Bigr)- v}
  \leq \int_x^{x+v} \nr{\bar \alpha - I_d}
  \leq \frac{1}{4} \nr{v},
\]
which is a contradiction.
In addition, $\bar A$ is onto: indeed, the image under $\bar A$ of a 
large ball $B(0,R)$
contains $B(0, (1-\eps)R)$. As $R$ tends to infinity, this proves
surjectivity.
Therefore, $\bar A$ is a global diffeomorphism, the shape change 
$h_{\bar \alpha}$ is well-defined,
and $[h_{\alpha_0}]=[\bar \alpha]$.   
Finally, $\bar \alpha$ is a weighted average of translates of $\alpha_0$, 
each of which is strongly pattern-equivariant and cohomologous
to $\alpha_0$, so  $\bar \alpha$ is cohomologous to $\alpha_0$ and 
$[h_{\bar \alpha_0}]=[\alpha_0]=\alpha$.
This completes the proof of the first half of the theorem. 

If $\Omega$ is a minimal and uniquely ergodic space of tilings,
without any assumption about FLC, then we proceed as before.  We
represent $\alpha\in H^1_w(\Omega, \R^d)$ by a weakly PE form
$\alpha_0$, and then convolve with $\rho_r$, for sufficiently large $r$,
to get a weakly PE form $\bar \alpha$ such that $\bar \alpha$ is pointwise
$\epsilon$-close to the identity matrix. The proof that $\bar A$ gives a
homeomorphism (actually diffeomorphism) of $\R^d$ is exactly as before, so the
shape change $h_{\bar \alpha}$ is well-defined and $[h_{\bar \alpha}] =
[\bar \alpha]$. All that remains is to show that $\bar \alpha$ and $\alpha_0$
represent the same class (namely $\alpha$) in $H^1_w$.

We compute
\[
\begin{split}
 \int_0^1 \bigl[ \alpha_0 (tv)\cdot v - \bar \alpha (tv)\cdot v \bigr] dt 
     & = \int_0^1 \int_{\R^d} \bigl[ \alpha_0(tv)\cdot v - \alpha_0(tv+s) \cdot v\bigr] \rho_r(s) ds\, dt \\
     & = \int_{\R^d} \rho_r(s) \int_0^1 \bigl[ \alpha_0(tv)\cdot v - \alpha_0(tv+s) \cdot v \bigr] dt \, ds
\end{split}
\]
Since $\alpha$ is closed, its integral over the boundary of the
closed parallelogram $[0, v, v+s, s]$ is zero, therefore
\[
 \int_0^1 \bigl[ \alpha_0 (tv)\cdot v - \bar \alpha (tv)\cdot v \bigr] dt
   = \int_{\R^d} \rho_r(s) \int_0^1 \bigl[ \alpha_0(ts)\cdot s - \alpha_0(v+ts) \cdot s \bigr] dt \, ds
\]
(as a side-note, this equation and equation~\eqref{eq:cocycle-parallelogram} are similar statements, using two pictures for cohomology).
The form $\alpha_0$ is pattern-equivariant on $\R^d$, so it is
bounded (say its matrix norm is bounded by $M$), and
$\nr{\alpha(x)_0\cdot s} \leq M \nr{s}$ for all $x$.
Since the support of $\rho_r$ is bounded as well, the quantity
above is bounded independent of $v$.
Proposition~\ref{prop:bounded-coboundary} guarantees that
$\bar\alpha$ and $\alpha_0$ represent the same cohomology class.

\end{proof}

\begin{cor}\label{cor:local-homeo}
  If $h: \Omega \ra \Omega'$ is a homeomorphism between aperiodic,
  FLC, uniquely ergodic tiling spaces, there exists a
  homeomorphism $\bar h$ between these two spaces, 
local in the sense of Definition~\ref{def:local-map},
such that $[h] = [\bar h]$.
Specifically, there exists a continuous function 
$s: \Omega \ra \R^d$
such that $h(T) = \bar h(T)-s(T)$.
\end{cor}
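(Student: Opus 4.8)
The plan is to realise the class $[h]_s$ by an explicit shape deformation $g\colon\Omega\to\Omega''$ — using that $[h]_s$ is sent to an invertible matrix by the Ruelle--Sullivan map — and then to apply Theorem~\ref{Classification-Thm} to compare $h$ with $g$. The local homeomorphism $h_0$ will be the composition of $g$ with an MLD correction, and $s$ will record the residual ``moving translation'' between $h$ and $h_0$.

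First, since $\Omega$ and $\Omega'$ are uniquely ergodic, Theorem~\ref{RS-invertible} gives that $C_\mu[h]$ is invertible. Feeding the class $[h]_s\in H^1_s(\Omega;\R^d)$ into Theorem~\ref{lastthm} then produces an FLC tiling space $\Omega''$ (minimal and aperiodic, being the image of $\Omega$ under a homeomorphism that sends orbits to orbits), a strongly pattern-equivariant $1$-cochain $\alpha_0$ with $[\alpha_0]=[h]_s$, and the induced shape deformation $g:=h_{\alpha_0}\colon\Omega\to\Omega''$, which is a homeomorphism. Two properties of $g$ are needed. (i) The map $g$ is \emph{local} in the sense of Definition~\ref{def:local-map}: in the FLC incarnation of the construction the tiles of $\Omega''$ carry only finitely many bounded-radius pattern labels of $\Omega$, and the displacement vectors between nearby points of $g(S)$ are integrals of $\alpha_0$ along the corresponding edges of $S$; hence $g(S)\cap B(0,r)$ depends only on $S\cap B(0,R)$ for an $R$ controlled by $r$, by the PE-radius of $\alpha_0$, and by the (close to $1$) bi-Lipschitz constant of the underlying deformation of $\R^d$. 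By Proposition~\ref{prop:local}, $g$ is then local. (ii) The map $g$ sends the canonical transversal of $\Omega$ to the canonical transversal of $\Omega''$ (vertices go to vertices), so the pullback $g^*(\Delta x)$ is a well-defined strongly PE cochain, where $\Delta x$ is the edge-displacement cochain on $\Omega''$ representing $[\ron F(\Omega'')]$; evaluating it on an edge shows $g^*(\Delta x)=\alpha_0$, so $[g]_s=g^*[\ron F(\Omega'')]=[\alpha_0]=[h]_s$.

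Now $h\colon\Omega\to\Omega'$ and $g\colon\Omega\to\Omega''$ are two orbit equivalences of FLC tiling spaces with equal strong cohomology classes, so Theorem~\ref{Classification-Thm}(2) provides a continuous $s_1\colon\Omega\to\R^d$ with $\tau_{s_1}\colon T\mapsto T-s_1(T)$ a homeomorphism, and an MLD map $\phi\colon\Omega'\to\Omega''$ with $g\circ\tau_{s_1}=\phi\circ h$. I would then set $h_0:=\phi^{-1}\circ g\colon\Omega\to\Omega'$. This $h_0$ is a homeomorphism, and it is local, being the composition of the local map $g$ with the (local) MLD map $\phi^{-1}$. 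Rearranging the relation above gives $h=\phi^{-1}\circ g\circ\tau_{s_1}=h_0\circ\tau_{s_1}$, so, using the associated cocycle $(h_0)_T$ of the orbit equivalence $h_0$ (which exists and is jointly continuous in $(T,x)$, as established earlier for cocycles of orbit equivalences),
\[
 h(T)=h_0\bigl(T-s_1(T)\bigr)=h_0(T)-(h_0)_T\bigl(s_1(T)\bigr).
\]
Setting $s(T):=(h_0)_T(s_1(T))$, which is continuous by joint continuity of the cocycle and continuity of $s_1$, yields $h(T)=h_0(T)-s(T)$. Finally $[h_0]_s=[h]_s$, since $\tau_{s_1}$ is homotopic to the identity (via $T\mapsto T-t\,s_1(T)$, $t\in[0,1]$), hence $h=h_0\circ\tau_{s_1}$ is homotopic to $h_0$, and homotopic maps induce equal pullbacks on \v Cech cohomology, so on $H^1_s$.

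The step I expect to be the main obstacle is property (i): that the shape deformation $g=h_{\alpha_0}$ furnished by Theorem~\ref{lastthm} is a local map. This forces one to revisit the construction in that proof in its FLC form, where the new prototiles are decorated by finitely many bounded-radius patches of $\Omega$ rather than by the entire transversal $\Xi$; one must check that with this smaller decoration $g$ is still a homeomorphism (which holds because the underlying deformation of $\R^d$ is a diffeomorphism, so no tiles collapse) and that it satisfies the locality estimate required by Proposition~\ref{prop:local}. Everything else is a routine assembly of Theorems~\ref{RS-invertible}, \ref{lastthm} and \ref{Classification-Thm} together with the cocycle identity for $h_0$.
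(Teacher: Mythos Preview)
Your argument is correct, but it takes a different route from the paper's. You pass through an auxiliary shape-deformed space $\Omega''$, invoke Theorem~\ref{Classification-Thm} to manufacture an MLD map $\phi\colon\Omega'\to\Omega''$, and then pull back via $\phi^{-1}$ to land in $\Omega'$. The paper instead stays inside $\Omega'$ throughout: it first perturbs $h$ to a nearby $h'$ that sends an FLC transversal to an FLC transversal (Proposition~\ref{prop:approx-RS}), then smooths the associated strong cocycle to $\alpha_0$ (this is the content of Theorem~\ref{lastthm}, used as a tool on the cocycle rather than to build a new space), and finally defines $h_0$ directly by fixing a basepoint $T_0$ and declaring $h_0(T_0-v):=h'(T_0)-\alpha_0(T_0,v)$. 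Locality of $h_0$ then follows from the transversal criterion of Proposition~\ref{prop:local}, since $h_0$ agrees with $h'$ on a transversal and $\alpha_0$ is transversally locally constant.

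The trade-off is exactly the one you flagged: your approach is more modular, assembling the result from Theorems~\ref{RS-invertible}, \ref{lastthm}, and \ref{Classification-Thm} as black boxes, but it forces you to reopen the FLC construction inside Theorem~\ref{lastthm} to check that the shape deformation $g$ is local (and remains injective with the reduced labelling). The paper's approach sidesteps both of these verifications and the appeal to Theorem~\ref{Classification-Thm} altogether, at the cost of being less ``off-the-shelf''.
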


\begin{proof}
 Let $h': \Omega \ra \Omega'$ be a map (in general not a
 homeomorphism), homotopic to $h$, 
such that $h'$ maps FLC transversals to
 FLC transversals (Proposition~\ref{prop:approx-RS}).
 Let $\alpha(T,v):= h'_T(v)$ be the corresponding
 $1$-cocycle.
 Then $[\alpha] = [h]$ in $H^1_s (\Omega; \R^d)$.
 By convolving $\alpha$ with a suitable bump function, as in the
proof of Theorem~\ref{lastthm}, we obtain
 a cocycle $\bar \alpha$ representing the same cohomology class,
 such that for all $T$,
 \(
  v \mapsto \bar \alpha (T,v)
 \)
 is a homeomorphism, and for all $v$,
 \(
  T \mapsto \bar \alpha(T,v)
 \)
 is transversally locally constant (or equivalently,
 $x \mapsto \bar \alpha(T-x,v)$ is strongly $T$-equivariant).
 Then, fix a tiling $T_0$  and define
 \[
  \bar h (T_0 - v) := h' (T_0) - \bar \alpha(T_0,v).
 \]
 This map extends to a homeomorphism $\Omega \ra \Omega'$.
 
 Note that by the cocycle condition, $\bar \alpha(T,0) = 0$ for all $T$.
 Furthermore, $\alpha$ and $\bar \alpha$ are cohomologous in
 $H^1_s$, therefore differ by a strong coboundary, that is
 $\bar \alpha(T_0,v) = h'_{T_0}(v) + (s(T_0) - s(T_0 - v))$ where
 $v \mapsto s(T_0 - v)$ is strongly PE.
 In particular, if $v$ is a return vector to a small enough transversal containing $T_0$,
 $\alpha(T_0,v) = \bar \alpha(T_0,v)$.
 Therefore, $\bar h$ agrees with $h'$ on a small transversal containing $T_0$,
 say $\Xi$.
 Therefore, it sends $\Xi$ to an FLC transversal.
 Additionally, the function $T \mapsto (\bar h)_T (x)$ is
 transversally locally constant (because $\bar \alpha$ is).
 Therefore, $\bar h$ sends \emph{any} FLC transversal to
 an FLC transversal. It is therefore a local map by 
Proposition~\ref{prop:local}.
 Finally, note that $\bar h$ and $h$ are homotopic, and 
therefore define the same cohomology class.
\end{proof}

\begin{cor}
  Within FLC, uniquely ergodic spaces, the equivalence relation ``to
  be homeomorphic'' is generated by
 \begin{itemize}
  \item MLD;
  \item Shape-changing homeomorphisms.
 \end{itemize}
\end{cor}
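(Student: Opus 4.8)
The plan is to derive this corollary by reassembling Theorems~\ref{RS-invertible}, \ref{lastthm} and~\ref{Classification-Thm}, together with Corollary~\ref{cor:local-homeo}; no new analytic ingredient should be needed. One inclusion is automatic: MLD maps and shape-changing homeomorphisms are in particular homeomorphisms, so the equivalence relation they generate is contained in ``to be homeomorphic''. The work is in the reverse inclusion, so I would fix a homeomorphism $h\colon\Omega\to\Omega'$ between FLC, uniquely ergodic tiling spaces (on FLC spaces $h$ automatically preserves translational orbits, so it is an orbit equivalence).

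First I would apply Theorem~\ref{RS-invertible} to conclude that $C_\mu[h]$ is invertible, and then feed the class $[h]\in H^1_s(\Omega;\R^d)$ into Theorem~\ref{lastthm}: this produces an FLC tiling space $\Omega''$ and a shape-changing homeomorphism $g=h_{\alpha_0}\colon\Omega\to\Omega''$ with $[g]_s=[\alpha_0]=[h]$. Now $g$ and $h$ are two orbit equivalences emanating from $\Omega$ carrying the same strong class, so Theorem~\ref{Classification-Thm}(2) supplies a continuous $s\colon\Omega\to\R^d$ for which $\tau_s\colon T\mapsto T-s(T)$ is a homeomorphism, together with an MLD map $\phi\colon\Omega''\to\Omega'$, satisfying $h\circ\tau_s=\phi\circ g$; equivalently $h=\phi\circ g\circ\tau_s^{-1}$.

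It then remains to absorb the factor $\tau_s^{-1}$, which is again a homeomorphism of the form $T\mapsto T-s'(T)$ for a weakly pattern-equivariant $s'$: a ``reparametrisation'' homeomorphism, homotopic to the identity and of cohomology class $[\ron F]$. I would argue that such a map is itself a shape-changing homeomorphism --- the shape deformation attached to the cocycle $\ron F+\delta s'$, which is cohomologous to $\ron F$, has Ruelle--Sullivan image the identity, and acts by retaining the combinatorics and the tile shapes while merely sliding the location of the origin. Granting this, $g\circ\tau_s^{-1}$ is a composition of shape-changing homeomorphisms, so $h=\phi\circ(g\circ\tau_s^{-1})$ exhibits $h$ as a composition of an MLD map with shape-changing homeomorphisms, which closes the argument.

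The step I expect to be the real (if modest) obstacle is exactly this last one: one has to confirm that the ``trivial'', class-$[\ron F]$ reparametrisations $\tau_s^{\pm1}$ are genuinely counted among --- or generated by --- the shape-changing homeomorphisms, so that a non-constant residual translation is not quietly omitted. A parallel route that sidesteps introducing $g$ is to start from Corollary~\ref{cor:local-homeo}, which already writes $h=\sigma\circ h_0$ with $h_0\colon\Omega\to\Omega'$ a \emph{local} homeomorphism whose cocycle is a strongly pattern-equivariant shape-change cocycle and $\sigma$ a reparametrisation of $\Omega'$; one would then split $h_0$ into a shape deformation followed by a \emph{local} topological conjugacy (hence, for minimal FLC spaces, an MLD map, using Proposition~\ref{prop:local} to check that the inverse is also local) and dispose of the residual reparametrisation as above. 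In either approach the crux is purely the bookkeeping with reparametrisations, and everything analytic has already been done in Sections~7 and~8.
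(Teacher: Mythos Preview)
Your chain of ingredients (Theorem~\ref{RS-invertible}, then Theorem~\ref{lastthm}, then Theorem~\ref{Classification-Thm}(2)) is exactly the route the paper takes, and your argument is already complete at the moment you obtain $h\circ\tau_s=\phi\circ g$ with $g\colon\Omega\to\Omega''$ a shape change and $\phi\colon\Omega''\to\Omega'$ an MLD map.

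The ``obstacle'' you flag --- absorbing $\tau_s^{-1}$ into the shape-change factor --- is not actually present, because you are reading the statement as stronger than it is. The corollary asserts only that the \emph{equivalence relation} ``to be homeomorphic'' is generated by MLD and shape changes; it does \emph{not} claim that every homeomorphism factors as a composition of such maps. Since $\tau_s\colon\Omega\to\Omega$ is a self-map, it contributes nothing to the equivalence relation: the chain $\Omega\xrightarrow{g}\Omega''\xrightarrow{\phi}\Omega'$ already exhibits $\Omega$ and $\Omega'$ as equivalent under the relation generated by the two moves. The paper makes precisely this point in the remark following the corollary: a general homeomorphism may involve translations, but translations map a space to itself and do not affect the equivalence relation. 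In particular, the paper does \emph{not} argue that $\tau_s$ is a shape change (and with $s$ merely weakly PE, it typically is not one in the FLC sense), so your attempt to justify that step is both unnecessary and likely doomed.
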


Note that this corollary does {\em not} say that all homeomorphisms
are a combination of MLD maps and shape changes. In fact, a general
homeomorphism may also involve \qtrans. However, \qtrans{}
map a space to itself, and do not affect the equivalence relation.

\begin{cor}
  Within the category of  uniquely ergodic tiling spaces with or without
  finite local complexity, the equivalence relation ``to
  be orbit equivalent'' is generated by
 \begin{itemize}
  \item Topological conjugacies;
  \item Continuous shape-changes.
 \end{itemize}
\end{cor}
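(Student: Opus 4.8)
The plan is to deduce this from the three main theorems already proved; what remains is bookkeeping with equivalence relations. The easy inclusion is immediate: a topological conjugacy is by definition an orbit equivalence, and a continuous shape-change of the kind occurring here is a homeomorphism carrying $\R^d$-orbits to $\R^d$-orbits (this is how $h_{\alpha_0}$ is built in Theorem~\ref{lastthm}: it sends the orbit of $T$ to the orbit of $\phi(T)$), hence also an orbit equivalence. So the equivalence relation generated by these two families is contained in ``to be orbit equivalent''.

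For the reverse inclusion, let $h: \Omega \ra \Omega'$ be an orbit equivalence of uniquely ergodic tiling spaces. First I would pass to its class $[h] \in H^1_w(\Omega;\R^d)$. By Theorem~\ref{RS-invertible} the matrix $C_\mu([h])$ is invertible, so Theorem~\ref{lastthm} applies and produces a tiling space $\Omega''$, a $1$-cochain $\alpha_0$ representing $[h]$, and a continuous shape-change $h_0 := h_{\alpha_0}: \Omega \ra \Omega''$ which is a homeomorphism; by the construction in that theorem its dynamical cocycle represents $[\alpha_0] = [h]$, so $[h_0] = [h]$.

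I would then apply Theorem~\ref{Classification-Thm}(1) to the pair $h_1 := h_0$ and $h_2 := h$, which are orbit equivalences out of $\Omega$ with $[h_1] = [h_2]$. This yields a continuous $s: \Omega \ra \R^d$ for which $\tau_s: T \mapsto T - s(T)$ is a self-homeomorphism of $\Omega$, together with a topological conjugacy $\phi: \Omega'' \ra \Omega'$ satisfying $h \circ \tau_s = \phi \circ h_0$. For our purposes the content of this identity is purely structural: $\Omega$ is joined to $\Omega''$ by the continuous shape-change $h_0$, and $\Omega''$ is joined to $\Omega'$ by the topological conjugacy $\phi$, while the reparametrization $\tau_s$ is a homeomorphism of $\Omega$ onto itself and hence invisible to the equivalence relation. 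Therefore $\Omega$ and $\Omega'$ lie in the same class of the equivalence relation generated by topological conjugacies and continuous shape-changes, which proves the reverse inclusion and the corollary.

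The argument is short because all the real work is upstream; this statement is the ILC counterpart of the corollary that follows Corollary~\ref{cor:local-homeo}, with ``MLD'' weakened to ``topological conjugacy'' because in the ILC setting Theorem~\ref{Classification-Thm}(1) only produces a conjugacy. The one delicate point is that the shape-change $h_0$ supplied by Theorem~\ref{lastthm} must be genuinely invertible, that is, an orbit equivalence rather than merely an orbit-preserving map, and this is exactly what the invertibility of the Ruelle--Sullivan matrix (Theorem~\ref{RS-invertible}) is there to guarantee.
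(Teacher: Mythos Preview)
Your proof is correct and follows essentially the same approach as the paper: invoke Theorem~\ref{RS-invertible} to get invertibility of $C_\mu([h])$, apply Theorem~\ref{lastthm} to produce a shape change with the same class, and then use Theorem~\ref{Classification-Thm}(1) to obtain the topological conjugacy. You include more detail than the paper does (the easy inclusion and the role of $\tau_s$), but the argument is the same.
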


\begin{proof}
If $h: \Omega \to \Omega_1$ is an orbit equivalence, then $C_\mu([h])$ is
invertible, so by Theorem \ref{lastthm}
there exists a shape change $h_{\bar \alpha}: \Omega \to \Omega_2$ 
such that $[h_{\bar \alpha}] = [h]$ in $H^1_w(\Omega; \R^d)$. 
But then, by Theorem \ref{Classification-Thm}, 
$\Omega_1$ and $\Omega_2$ are topologically conjugate. 
\end{proof}

\section{Examples}
\label{sec:examples}

In this section we present some examples to show the difference between the FLC
and ILC categories, and the differences between strong and weak cohomology.
These differences are already apparent in 1 dimensional tilings.

Recall that the Thue--Morse tiling space is generated by the
substitution $\sigma(a)=ab, \sigma(b)=ba$. A word obtained by applying
the substitution $\sigma$ $n$ times to a letter is called an $n$-th
order {\em supertile} and is denoted $A_n$ or $B_n$. 
The space of Fibonacci sequences is the space of 
all bi-infinite words in the letters $a$ and $b$ such that every sub-word
is found in $A_n$ or $B_n$ for $n$ sufficiently large. 
To each sequence we can associate tilings with
two types of tiles, ordered in the same way as in the sequence. 

\begin{prop}
  The weak cohomology group $H^1_w$ of the Thue--Morse tiling space is
  infinitely generated over the rationals.
\end{prop}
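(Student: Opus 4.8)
The plan is to compute $H^1_w(\Omega;\R)$ of the Fibonacci tiling space in three steps: identify it with the cohomology of the shift acting on continuous functions over the Fibonacci subshift; push this down to an irrational circle rotation via the Sturmian coding; and then produce infinitely many independent classes by a small‑divisors argument. For the first step, by Proposition~\ref{allthesamecohomology} and Theorem~\ref{thm:allsame} we may represent classes by dynamical $1$‑cocycles restricted to the canonical transversal $\Xi$; for the Fibonacci space $\Xi$ is homeomorphic to the Fibonacci subshift $(X,\sigma)$, the return map of the translations being the shift $\sigma$ (the particular choice of tile lengths is immaterial here, since it changes neither $X$ nor the coboundary criterion below). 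A restricted weakly pattern‑equivariant $1$‑cocycle is then exactly a continuous function $c\in C(X,\R)$ — its value on a single $\sigma$‑step — and by the Gottschalk–Hedlund criterion of Proposition~\ref{prop:bounded-coboundary} it is a coboundary precisely when the partial sums $S_n^\sigma c:=\sum_{i=0}^{n-1}c\circ\sigma^i$ are uniformly bounded, equivalently $c=b-b\circ\sigma$ for some $b\in C(X,\R)$. Hence
\[
 H^1_w(\Omega;\R)\;\cong\;C(X,\R)\big/\{\,b-b\circ\sigma\ :\ b\in C(X,\R)\,\}.
\]

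Next I would use that the Fibonacci subshift is Sturmian: there is a surjective factor map $p\colon X\to\T=\R/\Z$ with $p\circ\sigma=R_\beta\circ p$, where $R_\beta$ is rotation by the irrational letter‑frequency $\beta=\phi-1$. Pulling functions back along $p$ gives a map
\[
 C(\T,\R)\big/\{\,g-g\circ R_\beta\,\}\;\longrightarrow\;H^1_w(\Omega;\R),
\]
which is injective: if $g\circ p=b-b\circ\sigma$ then $S_n^\sigma(g\circ p)=(S_n^{R_\beta}g)\circ p$ is uniformly bounded, so by surjectivity of $p$ and Gottschalk–Hedlund again $g$ is already a continuous coboundary for $R_\beta$. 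It therefore suffices to show that the classical object $C(\T,\R)/\{g-g\circ R_\beta\}$ is infinite‑dimensional over $\R$.

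For that, let $p_j/q_j$ be the convergents of $\beta$, put $\delta_j:=\operatorname{dist}(q_j\beta,\Z)=|q_j\beta-p_j|<1/q_{j+1}$, and note $\sum_j\delta_j<\infty$ (since $q_{j+1}$ grows at least geometrically) and $|1-e^{2\pi iq_j\beta}|\asymp\delta_j$. Split $\N$ into disjoint infinite sets $J_1,J_2,\dots$ and set
\[
 c_k\;:=\;\sum_{j\in J_k}\delta_j\bigl(e^{2\pi iq_j x}+e^{-2\pi iq_j x}\bigr),
\]
a uniformly convergent, hence continuous, real function on $\T$. Suppose $\sum_k\lambda_kc_k=g-g\circ R_\beta$ with $\lambda_k\in\R$ (finitely many nonzero) and $g\in C(\T)\subset L^2(\T)$. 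Matching Fourier coefficients at the frequencies $\pm q_j$ — legitimate because all series in sight converge absolutely — gives $|\widehat g(\pm q_j)|=|\lambda_k|\,\delta_j/|1-e^{\pm2\pi iq_j\beta}|\asymp|\lambda_k|$ for $j\in J_k$. If some $\lambda_{k_0}\neq0$, this forces $\sum_{j\in J_{k_0}}|\widehat g(q_j)|^2=\infty$, contradicting $g\in L^2$. Hence all $\lambda_k=0$: the classes $[c_k]$ are $\R$‑linearly independent, so their images furnish infinitely many $\R$‑linearly independent classes in $H^1_w(\Omega;\R)$. In particular $H^1_w$ of the Fibonacci tiling space is infinitely generated over $\Q$.

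The soft points — the suspension model, the Gottschalk–Hedlund identification of coboundaries with bounded cocycles, and the injectivity of both pullback maps — are immediate from the results quoted above. The one place needing genuine care is the last step: one must be sure that a continuous coboundary really constrains the formal solution of the cohomological equation to lie in $L^2$. This is exactly what the construction is rigged to exploit: because $\sum_j\delta_j<\infty$ the functions $c_k$ have absolutely convergent Fourier series, so the cohomological equation may be read coefficient by coefficient and Parseval applied with no regularity subtleties. (An entirely parallel route, if one prefers a geometric description, is to factor $\Omega$ onto the irrational linear flow on $\T^2$ that is its maximal equicontinuous factor and run the same Fourier/small‑divisors argument on $\T^2$.)
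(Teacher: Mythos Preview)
Your argument is correct and takes a genuinely different route from the paper's. The paper works entirely inside the hierarchical structure of the substitution: it builds strongly PE cochains $\rho_n$ recording membership in level-$n$ $a$-supertiles, forms the weakly PE series $\rho_x=\sum_n x^n\rho_n$ for $x\in(\phi-1,1)$, and shows directly that no finite linear combination of the $\rho_x$ can be a coboundary by comparing integrals over two copies of $A_{n-1}$ inside $A_{n+1}$ (the difference is of order $x^n\phi^n\to\infty$). This is a purely combinatorial supertile argument and, as the paper notes, adapts immediately to any one-dimensional substitution tiling. Your approach instead exploits that Fibonacci is Sturmian: you pass to the transversal subshift, factor onto the irrational rotation $R_\beta$, and run a classical small-divisors construction in Fourier series to exhibit infinitely many independent classes in $C(\T)/\{g-g\circ R_\beta\}$, which then inject into $H^1_w$. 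What you gain is a clean reduction to a well-known analytic fact about the cohomological equation for Diophantine rotations; what you lose is immediate portability to non-Sturmian substitutions (though, as you remark, the maximal equicontinuous factor route would recover much of that generality for Pisot substitutions). Both proofs ultimately rest on the Gottschalk--Hedlund criterion of Proposition~\ref{prop:bounded-coboundary}, but they detect unboundedness by very different mechanisms.
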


\begin{proof}
  We work in pattern-equivariant cohomology. Recall that a closed
  weakly PE 1-cochain is a weak coboundary if and only
  if its integral is bounded (Proposition~\ref{prop:bounded-coboundary}).

For the Thue--Morse tiling space, let $\rho_n$ be a 1-cochain that
evaluates to 1 on every tile of each supertile $A_n$,
and to 0 on every tile of each supertile $B_n$. This is
strongly PE.  Pick $x\in (\frac12,1)$, 
so that $\sum x^n$ converges but $2^n x^n$ goes to infinity.
Then $\rho_x : = \sum x^n
\rho_n$ is a well-defined weakly PE 1-cochain.  However $\rho_x$ cannot be a
weak coboundary. To see this, consider a level $n+1$ supertile of type
$a$ sitting somewhere in a tiling:
$$A_{n+1} = A_n B_n = A_{n-1} B_{n-1} B_{n-1} A_{n-1},$$
where the first $A_{n-1}$ for the left is part of an $A_n$ supertile and the second
is part if a $B_n$ supertile (see Figure~\ref{fig:TM-supertile}).
The value of $\rho_x$ on each tile of the
first $A_{n-1}$ is exactly $x^n$ greater than the value of $\rho_x$ on
the corresponding tile of the second $A_{n-1}$, since they belong to
the same $m$-supertile for all $m>n$, and to corresponding
$m$-supertiles for $m<n$.  Since an $A_n$ supertile contains 
$2^n$ tiles, the integral of $\rho_x$ over the first $A_{n-1}$ is
$2^n x^n$ greater than the integral over the second. Since
$2^n x^n$ is not bounded, and since we can do this comparison for
any value of $n$, it is not possible for $\rho_x$ to have a bounded
integral.  Hence $\rho_x$ represents a nontrivial class in the weak 
cohomology of $\Omega$.

\begin{figure}[htp]
\begin{center}
 \includegraphics[scale=1]{./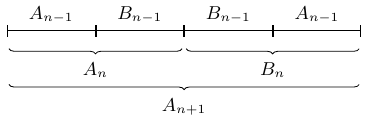}
 \caption{The inclusion of $(n-1)$-supertiles in the $n$ and $(n+1)$-supertiles.}
 \label{fig:TM-supertile}
\end{center}
\end{figure}

In any finite linear combination $\sum_k c_k \rho_{x_k}$ of such
sequences, the term with the largest $x$ will dominate on high-order
supertiles, and
$\sum_{k} c_k 2^n x_k^n $ will be unbounded as a function of $n$, 
implying that $\sum_k c_k
\rho_{x_k}$ is not a coboundary.  Thus the weak cohomology classes for
the (uncountably many!) $\rho_x$'s are linearly independent.
\end{proof}

With small modifications, the same construction could be applied to any
1-dimensional hierarchical tiling space. We note that a a very similar theorem
was proved for 1-dimensional cut-and-project sets in~\cite{BK10}.

Now let $\Omega$ be the Thue--Morse tiling space, say with both tiles of length
$1$, and let $\Omega'$ be 
a deformation of this tiling space by $\rho_x$ for some 
$x \in (\frac12,1)$.  The following result shows the difference between being
homeomorphic to an FLC tiling space and being conjugate to one.

\begin{prop}
$\Omega'$  is not conjugate to any FLC tiling space.
\end{prop}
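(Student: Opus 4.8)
The plan is to argue by contradiction: suppose $\Omega'$ were topologically conjugate to some FLC tiling space $\Omega''$. The composition $\Omega \to \Omega' \to \Omega''$, where the first map is the shape-change $h_{\rho_x}$ and the second is the conjugacy, would then be a homeomorphism from the FLC space $\Omega$ to the FLC space $\Omega''$. By Definition~\ref{def:FLC-invariant}, such a homeomorphism $g$ has a well-defined class $[g] \in H^1_s(\Omega; \R^d)$. Since the conjugacy part is a topological conjugacy (hence homotopic to a translation-type map and cohomologically trivial in the sense that it only twists by the fundamental shape class of $\Omega''$), the class $[g]$ should, up to identifying the cohomology of $\Omega$ and $\Omega'$ via $h_{\rho_x}^*$, be represented by the cocycle underlying $h_{\rho_x}$, namely the class $[\ron F(\Omega)] + [\rho_x]$ (where I write $[\rho_x]$ for the $\R$-valued class scaled into the $\R^d=\R$ picture, added to the fundamental shape class of the undeformed Fibonacci space). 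The point is that $[g]$ must land in the \emph{strong} cohomology $H^1_s(\Omega;\R)$, because both $\Omega$ and $\Omega''$ have FLC.

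The crux, then, is that $[\rho_x]$ is \textbf{not} a strong cohomology class: although $\rho_x = \sum x^n \rho_n$ is a uniform limit of strongly PE cochains (each partial sum is strongly PE), it is only \emph{weakly} PE, and — this is the content of the preceding proposition — it is not a weak coboundary, so it is genuinely nonzero in $H^1_w$. I would argue that no strongly PE cochain can be cohomologous to $\rho_x$ in the weak cohomology. Concretely: a strongly PE 1-cochain on the Fibonacci space is a finite $\Z$-linear (or $\R$-linear) combination of the $i_t$-type cochains detecting finite local patterns, so its value on a tile depends only on a bounded-radius patch; such a cochain, restricted to the two copies of $A_{n-1}$ inside $A_{n+1}=A_{n-1}B_{n-1}A_{n-1}$ described in Figure~\ref{fig:fibo-supertile}, eventually takes \emph{equal} values on corresponding tiles once $n$ exceeds the radius of pattern-equivariance, whereas $\rho_x$ differs by exactly $x^n$ on those tiles. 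Hence the difference between $\rho_x$ and any strongly PE cochain still has the unbounded-integral property exhibited in the previous proof, so it is not a weak coboundary. Therefore $[\rho_x] \neq [\beta]$ in $H^1_w$ for any strongly PE $\beta$, and in particular $[h_{\rho_x}]$ does not come from a strong class.

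The remaining step is to tie this to the hypothetical FLC model. If $\Omega'$ were conjugate to an FLC space $\Omega''$, then by functoriality the pullback of the fundamental shape class $[\ron F(\Omega'')] \in H^1_s(\Omega''; \R)$ along the composite homeomorphism $\Omega \to \Omega''$ would be a \emph{strong} class in $H^1_s(\Omega;\R)$ (this uses the two approaches to defining $[h]$ for FLC spaces described in Section~5: either via \v Cech cohomology, which manifestly lands in $H^1_s \cong \check H^1$, or via a transversal-preserving approximation). But this pullback class must also equal $[h_{\rho_x}] = [\ron F(\Omega)] + [\rho_x]$ up to the conjugacy $\Omega' \cong \Omega''$ contributing only a strong correction — so $[\rho_x]$ itself would have to be a strong class, contradicting the previous paragraph.

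\textbf{Main obstacle.} The delicate point is making precise the claim that passing from $\Omega'$ to a conjugate FLC model $\Omega''$ changes the relevant cohomology class only by a strong class (equivalently, that the conjugacy $\Omega' \to \Omega''$ cannot ``absorb'' the weak-but-not-strong part $[\rho_x]$). Topological conjugacies are cohomologically nontrivial in general — they can shear the fundamental shape class — so one must argue that such a shear stays within strong cohomology. This follows because a conjugacy between FLC spaces is, up to an arbitrarily small translation, a local map (Proposition~\ref{prop:approx-RS} and Proposition~\ref{prop:local}), and local maps pull strongly PE cochains back to strongly PE cochains; but one should check carefully that the FLC structure on $\Omega''$ is being used on the correct side. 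An alternative, cleaner route that sidesteps this: simply observe that if $\Omega'$ were conjugate to an FLC space, then $H^1_w(\Omega')$ restricted to classes coming from homeomorphisms would have to be detected by $\check H^1$, yet $[h_{\rho_x}]$ is an explicit weak class not in the image of $H^1_s \hookrightarrow H^1_w$, giving the contradiction directly.
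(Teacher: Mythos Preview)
Your overall strategy matches the paper's: argue by contradiction, compose the deformation $h:\Omega\to\Omega'$ with a hypothetical conjugacy $h':\Omega'\to\Omega''$, observe that $h'\circ h$ is an FLC-to-FLC homeomorphism and hence has a well-defined \emph{strong} class, and derive a contradiction from the fact that $\Delta x + \rho_x$ is not weakly cohomologous to any strongly PE cochain. Your supertile comparison establishing this last fact is exactly the paper's argument.

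Where you get tangled is the ``Main obstacle'' paragraph. You correctly note that conjugacies can be nontrivial in \emph{strong} cohomology (Example~\ref{example-Fibonacci} illustrates this), and then try to control a ``strong correction'' via Propositions~\ref{prop:approx-RS}--\ref{prop:local}. The paper's resolution is much simpler: do this step in \emph{weak} cohomology, where conjugacies are trivial. Since $h'(T-x)=h'(T)-x$, the dynamical cocycle of $h'$ is $(h')_T(x)=x$, so the cocycle of $h'\circ h$ is literally equal to that of $h$, giving $[h'\circ h]_w = [h]_w = [\Delta x + \rho_x]$ with no correction whatsoever. On the other hand $h'\circ h$ is FLC-to-FLC, so it has a strong class whose image under the natural map $H^1_s\to H^1_w$ is $[h'\circ h]_w$. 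Hence $[\Delta x+\rho_x]$ lies in the image of $H^1_s\to H^1_w$, contradicting what you already proved. Your ``alternative, cleaner route'' points in this direction but does not isolate the one-line reason --- that a conjugacy has the identity dynamical cocycle --- which is what makes the argument work.
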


\begin{proof}
Let $h: \Omega \to \Omega'$ be the deformation map. It is easy to see that
$[h]$ is represented by $\beta= \rho_x + \Delta x$. 
We cannot write $\beta$ 
as the sum of a weak coboundary and a strongly PE 1-cochain. To see this,
suppose that
$\beta = \beta' + d \gamma$, with $\beta'$ strongly PE with some radius $R$,
and with $\gamma$ weakly PE (hence bounded).  If $W_1$ and $W_2$ are two 
different copies of a high-level supertile within a tiling, then the 
difference between $\beta(W_1)$ and $\beta(W_2)$ would be bounded, since
the integral of $d\gamma$ is bounded and since the values of $\beta'$ at
corresponding points are
the same except on regions of length $R$ around each endpoint. However,
we have already seen that the integral of $\rho_x$ on different 
supertiles $A_n$ can differ by arbitrarily large amounts. 

Now suppose that $h': \Omega' \to \Omega''$ is a topological conjugacy
and that $\Omega''$ is an FLC tiling space. Then in weak cohomology, 
$(h' \circ h)^*[\ron F] = h^* ((h')^* [\ron F]) = h^*[\ron F]$ (since $h'$
is a conjugacy). Therefore $[h' \circ h] = [h]$ and is represented by 
$\Delta x + \rho_x$. However, $h' \circ h$ is a homeomorphism of FLC tiling
spaces, and so $[h' \circ h]$ is a class in the strong cohomology of 
$\Omega$, and in particular can be represented by a strongly PE 1-cochain. 
Since $\Delta x + \rho_x$ is not weakly cohomologous to any strongly
PE 1-cochain, we have a contradiction. 
\end{proof}

The details of the cochain $\rho_x$ are not so important to this argument. 
The important fact is that the weak cohomology is much bigger than the 
strong cohomology, so there are many orbit equivalences whose classes
are not in the image of the natural map $H^1_s \to H^1_w$. The image of
any FLC tiling space by such an orbit equivalence is necessarily
an ILC tiling space that is (by construction!) orbit equivalent 
to an FLC tiling space, but that is not topologically conjugate to any 
FLC tiling space.

\section{Continuous maps between tiling spaces}

In this final section, we explain briefly how the results of this paper
can be extended beyond orbit equivalences between tiling spaces,
to include more general surjective maps that preserve orbits. 

For any continuous map $f: \Omega \ra \Omega'$ between tiling spaces of 
finite local complexity, the induced map in \v Cech
cohomology is well defined. We define the class of the map $f$ 
in $H^1_s(\Omega; \R^d) \simeq \check H^1 (\Omega; \R^d)$ to be
\[
 [f]_s := f^* ([\ron F]).
\]
Similarly if $\Omega$ is a continuous map between arbitrary tiling spaces,
such that orbits are mapped into orbits, the equation
\[
 f(T-v) = f(T) - f_T(v)
\]
defines a dynamical $1$-cocycle $\alpha(T,v) := f_T(v)$, and the class
$[f]_w :=[\alpha]$ is well defined in $H^1_w(\Omega; \R^d)$. When $f$ is a 
homeomorphism of FLC spaces or an
orbit equivalence, these definitions agree with our previous notions. 

\begin{thm}\label{factor-thm}
Let $\Omega$ be a tiling space with or without finite local complexity.
Assume $h_i: \Omega \ra \Omega_i$ are surjective maps to other tiling
spaces which preserve orbits, and such that $h_1$ is an orbit equivalence.
If $[h_1]_w = [h_2]_w$, then there exists a factor map 
$\phi: \Omega_1 \ra \Omega_2$ such that $h_2$ is homotopic to $\phi \circ h_1$. 
If the spaces have FLC and $[h_1]_s = [h_2]_s$, then $\phi$ can be chosen to
be a local derivation. 
\end{thm}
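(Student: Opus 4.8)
The plan is to mimic the proof of Theorem~\ref{Classification-Thm} as closely as possible, but replacing the inverse $h_1^{-1}$ (which no longer exists, since $h_2$ need not be injective) with a careful use of the orbit equivalence $h_1$ on the level of orbits. Since $h_1$ is an orbit equivalence, it induces a bijection $\bar h_1$ between the orbit spaces $\Omega/\R^d$ and $\Omega_1/\R^d$, and $h_1$ is a homeomorphism, so one can still form $h_1^{-1}$ as an honest map $\Omega_1 \to \Omega$. The hypothesis $[h_1]_w = [h_2]_w$ means the associated cocycles $h_{1,T}$ and $h_{2,T}$ differ by the coboundary of a continuous $s_0: \Omega \to \R^d$, exactly as in Equation~\eqref{eq:weak-coboundary}. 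First I would run the same computation as in the proof of Theorem~\ref{Classification-Thm}: setting $T = h_1^{-1}(T')$ for $T' \in \Omega_1$ and computing $h_2 \circ h_1^{-1}(T'-w)$ using the cocycle relations for both $h_1$ and $h_2$, one obtains
\[
 h_2 \circ h_1^{-1}(T'-w) = h_2 \circ h_1^{-1}(T') - w - s_0 \circ h_1^{-1}(T'-w) + s_0 \circ h_1^{-1}(T').
\]
Defining $\phi(T') := h_2 \circ h_1^{-1}(T') + s_0 \circ h_1^{-1}(T')$, this shows $\phi(T'-w) = \phi(T') - w$, so $\phi$ is a continuous factor map $\Omega_1 \to \Omega_2$. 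Surjectivity of $\phi$ follows from surjectivity of $h_2$ (and of $h_1^{-1}$).

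The next step is to show $h_2$ is homotopic to $\phi \circ h_1$. By the definition of $\phi$,
\[
 \phi \circ h_1(T) = h_2(T) + s_0(T),
\]
so $\phi \circ h_1$ is obtained from $h_2$ by translating each tiling $T$ by the continuous amount $s_0(T)$. The straight-line homotopy $H_t(T) := h_2(T) + t\, s_0(T)$, $t \in [0,1]$, is a continuous path of continuous maps $\Omega \to \Omega_2$ connecting $h_2$ to $\phi \circ h_1$; this is exactly the sort of ``translation by a continuous function'' homotopy already used in Example~\ref{example-translation} and in the discussion following Proposition~\ref{prop:approx-RS}. (One should check that $H_t$ indeed lands in $\Omega_2$ for each $t$, which is automatic since translating a tiling in $\Omega_2$ keeps it in $\Omega_2$, $\Omega_2$ being translation-invariant.)

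For the FLC statement, suppose the spaces have FLC and $[h_1]_s = [h_2]_s$. Now $s_0$ can be taken transversally locally constant, and I would follow the second half of the proof of Theorem~\ref{Classification-Thm}: perturb $h_2 \circ h_1^{-1}$ by Proposition~\ref{prop:approx-RS} to a map $\psi$ sending FLC transversals to FLC transversals, agreeing with $\phi$ at a chosen base tiling, with $\psi$ and $\phi$ differing by the coboundary of a transversally locally constant function. On a clopen piece $\Xi_0$ of a transversal $\Xi$ on which that function is constant, $\phi$ agrees with $\psi$ up to a fixed translation, hence $\phi(\Xi_0) = \psi(\Xi_0) - s_0$ is contained in an FLC transversal. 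Covering $\Xi$ by finitely many such clopen pieces shows $\phi$ sends FLC transversals to FLC transversals, so by Proposition~\ref{prop:local} $\phi$ is a local map, hence a local derivation (being also a factor map).

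The main obstacle I expect is purely bookkeeping: in Theorem~\ref{Classification-Thm} one used bijectivity of $h_2$ to conclude $\phi$ was bijective, and here that step must be replaced — $\phi$ is only a factor map, not a conjugacy, and surjectivity has to be extracted from surjectivity of $h_2$ together with the structure of $h_1$ as an orbit equivalence. Also, in the FLC case one must make sure that the approximation argument of Proposition~\ref{prop:approx-RS}, which is stated for homeomorphisms, still applies to $\psi \approx h_2 \circ h_1^{-1}$ — but since $h_2 \circ h_1^{-1}$ is a continuous orbit-preserving surjection and the perturbation argument only uses continuity and the transversal structure of the target, this goes through; the needed conclusion is that $\phi$ sends transversals into transversals, which is the defining property of local maps in Proposition~\ref{prop:local}.
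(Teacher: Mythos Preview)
Your proposal is correct and follows essentially the same approach as the paper: both run the computation from Theorem~\ref{Classification-Thm} using $h_1^{-1}$, define $\phi(T') = h_2 \circ h_1^{-1}(T') + s_0 \circ h_1^{-1}(T')$, verify it is a factor map, and obtain the homotopy from $\phi(h_1(T)) = h_2(T) + s_0(T)$. Your explicit straight-line homotopy and your flag that Proposition~\ref{prop:approx-RS} is stated only for homeomorphisms are in fact more careful than the paper, which simply writes ``exactly as before'' for the FLC transversal check.
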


\begin{proof}
Much of the proof of Theorem~\ref{Classification-Thm} carries over.
Since $[h_2]=[h_1]$, we must have $h_{2,T}(x)=h_{1,T}(x) + (s_0(T-x)-s_0(T))$
for some continuous function $s_0: \Omega \ra \R^d$.
We then compute
$$h_2 \circ h_1^{-1}(T'-w) = h_2 \circ h_1^{-1}(T') - w -s_0\circ h_1^{-1}(T'-w)
+ s_0\circ h_1^{-1}(T'),$$
exactly as before. 
This computation requires the invertibility of $h_1$, but makes no assumptions
on $h_2$ beyond the fact that $h_2$ preserves orbits. Also as before, we define
$$\phi(T') = h_2 \circ h_1^{-1}(T') + s_0 \circ h_1^{-1}(T'),$$
and see that $\phi(T'-w) = \phi(T')-w$. The map $\phi$ then extends to a factor 
map $\Omega_1 \ra \Omega_2$. When $\Omega_1$ and $\Omega_2$ have FLC, we
check that $\phi$ preserves a transversal (exactly as before), making $\phi$ 
a local derivation. 

Unraveling the definitions, we see that, for any tiling $T \in \Omega$, 
$\phi(h_1(T)) = h_2(T) + s_0(T),$
so
$$h_2(T) = \phi(h_1(T)) - s_0(T)$$
is homotopic to $\phi \circ h_1$. 
\end{proof}

The only important difference from the proof of
Theorem~\ref{Classification-Thm} is that we cannot write $h_2(T) +
s_0(T)$ as a composition $h_2 \circ \tau_s$.  Since $h_2$ is not
assumed to be injective on orbits, the map $(h_{2,T})^{-1}$ that we
previously used to construct the translation function $\tau_s$ is no longer
well defined.

In the uniquely ergodic case, whenever the class of the map $h_2$ has
a non-singular image under the Ruelle--Sullivan map, the existence of
the tiling space $\Omega_1$ and the orbit-equivalence $h_1$ follow from 
Theorem \ref{lastthm}:

\begin{cor}
 If $\Omega$ is uniquely ergodic and $h: \Omega \ra \Omega'$ is a 
surjective orbit-preserving map
 such that $C_\mu [h]$ is non-singular, then $h$ is homotopic to the
composition $\phi \circ h_0$ of a shape deformation $h_0$ and a factor
map $\phi$.  
 In the FLC case, the same statement holds with $\phi$ a local
 derivation.
\end{cor}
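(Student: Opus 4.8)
The plan is to obtain this corollary by feeding the output of Theorem~\ref{lastthm} into the classification theorem just proved, with no new analytic input. First I would record that, since $h: \Omega \to \Omega'$ is surjective and orbit-preserving, the equation $h(T-v) = h(T) - h_T(v)$ defines a dynamical $1$-cocycle $\alpha_h(T,v) := h_T(v)$, hence a class $[h]_w = [\alpha_h] \in H^1_w(\Omega;\R^d)$ (and, in the FLC case, a class $[h]_s = h^*[\ron F] \in H^1_s(\Omega;\R^d)$). By hypothesis $C_\mu[h]$ is non-singular, i.e. invertible.

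Next I would apply Theorem~\ref{lastthm} to the class $[h]_w$. It produces a tiling space — I will call it $\Omega_1$, to avoid a clash with the target $\Omega'$ of $h$ — together with a $1$-cochain $\alpha_0$ representing $[h]_w$ such that the shape deformation $h_0 := h_{\alpha_0}: \Omega \to \Omega_1$ is a homeomorphism. I would then observe that $h_0$ is in fact an orbit equivalence: by its construction in the proof of Theorem~\ref{lastthm} it carries the orbit of $T$ onto the orbit of $\phi(T)$ through a diffeomorphism $\phi$ of $\R^d$, so it sends $\R^d$-orbits to $\R^d$-orbits, and it is already known to be a homeomorphism. Moreover $[h_0]_w = [\alpha_0] = [h]_w$ by construction. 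If $\Omega$ has FLC, Theorem~\ref{lastthm} additionally lets us take $\alpha_0$ strongly pattern-equivariant and $\Omega_1$ with FLC, so that $[h_0]_s = [h]_s$ as well.

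Finally I would invoke the classification theorem proved just above with $h_1 := h_0$ (the orbit equivalence) and $h_2 := h$ (the surjective orbit-preserving map). Since $[h_1]_w = [h_0]_w = [h]_w = [h_2]_w$, that theorem supplies a factor map $\phi: \Omega_1 \to \Omega'$ such that $h = h_2$ is homotopic to $\phi \circ h_1 = \phi \circ h_0$ — exactly the asserted factorization into a shape deformation $h_0$ followed by a factor map $\phi$. In the FLC case $[h_1]_s = [h_2]_s$, so the FLC half of that theorem upgrades $\phi$ to a local derivation.

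There is essentially no obstacle here beyond bookkeeping: all the real work lives in Theorem~\ref{lastthm} and in the classification theorem. The one point I would be careful to spell out is that the shape deformation manufactured by Theorem~\ref{lastthm} genuinely satisfies the hypotheses imposed on ``$h_1$'' in the classification theorem — namely that it is a homeomorphism preserving translational orbits, and that its cohomology class equals $[h]$, so that the equality $[h_1]=[h_2]$ holds. Both are immediate from the construction, which is what makes the corollary a short consequence of the two preceding results.
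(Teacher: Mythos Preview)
Your proposal is correct and follows essentially the same approach as the paper: the paper's argument is precisely to invoke Theorem~\ref{lastthm} to produce the shape deformation $h_0$ with $[h_0]=[h]$, and then feed $h_1:=h_0$ and $h_2:=h$ into the classification theorem of Section~10 to extract the factor map (local derivation in the FLC case). Your write-up simply spells out the bookkeeping that the paper leaves implicit in the sentence preceding the corollary.
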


The result above shows that given an ``on average non-singular'' map
$h: \Omega \ra \Omega'$, the lack of bijectivity can have two possible
causes.  The map $h_{T}$ can fail to be bijective as a map from
$\R^d$ to $\R^d$, in which case $h$
doesn't send a given orbit to its image bijectively.
Or $h$ can collapse several
orbits into one, in which case $\phi$ is a non-invertible factor
map.

Factor maps between linearly repetitive tiling spaces have been studied in 
\cite{CDP10} (see also the review paper~\cite{ACCDP15}). The results of this 
section suggest that, by separately studying shape deformations and factor maps,
we can gain an understanding of how arbitrary tiling spaces are related. 

\begin{small}

\bibliographystyle{abbrv}

\bibliography{biblio}

\end{small}

\end{document}